\newcommand{\ifKOMAClass}[1]{
\@ifundefined{KOMAClassName}{#1}
}
\theoremstyle{break}
\newtheorem{satz}{Satz}[section]		
\newtheorem{defi}[satz]{Definition}  
\newtheorem{lem}[satz]{Lemma}			
\newtheorem{lemma}[satz]{Lemma}
\newtheorem{theorem}[satz]{Theorem}	
\newtheorem*{theorem-non}{Theorem}
\newtheorem{cor}[satz]{Corollary}
\newtheorem{remark}[satz]{Remark}
\newtheorem*{ass}{Assumptions}
\newtheorem*{ass2}{Assumption}
\theoremstyle{nonumberplain}
\newtheorem{proof}{Proof}
\DeclarePairedDelimiter\ceil{\lceil}{\rceil}
\newcommand{\NE}[1]{\abs{ #1}}   %
\newcommand{\given}{\;\middle\vert\;}
\newcommand{\oB}[3]{{B}_{#3}^{}\left(#1,#2\right)}
\newcommand{\cB}[3]{\overline{{B}}_{#3}^{}\left(#1,#2\right)}
\DeclareMathOperator*{\crr}{\ensuremath{r_0}	}
\DeclareMathOperator*{\cs}{\ensuremath{s_0}	}
\DeclareMathOperator{\grad}{\ensuremath{deg}	}
\DeclareMathOperator{\vol}{\ensuremath{vol}	}
\newcommand{\RZ}{\mathbb{R}}
\newcommand{\GZ}{\mathbb{Z}}
\newcommand{\NZ}{\mathbb{N}}
\newcommand{\pr}[1]{\ensuremath{\xi_{#1}}}
\newcommand{\dista}{\ensuremath{ \rho}} %
\newcommand{\distb}[1]{\ensuremath{ \rho_{#1}}} %
\newcommand{\dist}[2]{\ensuremath{ \rho_{#1}\left(#2 \right)}} %
\newcommand{\EL}[1] { \ensuremath{EL \left(#1\right)}} %
\newcommand{\prob}[1]{\ensuremath{\mathbf{P}\left(#1\right)}}					%
\newcommand{\ew}[1]{\ensuremath{\mathbf{E}\left(#1\right)}}						%
\newcommand{\ewt}[2][\theta]{\ensuremath{\mathbf{E}_{#1}\left(#2\right)}}		%
\newcommand{\ewx}[2]{\ensuremath{\mathbf{E}_{#1}\left(#2\right)}}  			%
\newcommand{\BinV}{\mathrm{Bin}} 	%
\newcommand{\manifold}[1]{\mathcal{#1}}
\newcommand{\diam}[1]{\mathrm{diam}\left(#1\right)}
\newcommand{\secK}{\mathrm{sec}}
\DeclareMathOperator*{\argmin}{argmin}
\providecommand{\norm}[1]{\left\lVert #1 \right\rVert}
\providecommand{\abs}[1]{\left\lvert #1 \right\rvert}
\newcommand{\bigO}{\ensuremath{\mathcal{O}}} %
\newcommand{\Indik}{\mathds{1}}			%
\newcommand{\Einh}{\mathds{I}}			%
\renewcommand{\nomgroup}[1]{
	\renewcommand{\makelabel}[1][]{##1}
	\item[~]
	\ifthenelse{\equal{#1}{A}}{%
		\item[\textbf{Allgemeine Abkürzungen}]}{%
		\ifthenelse{\equal{#1}{B}}{%
			\item[\textbf{Mathematische Symbole 1}]}{%
			\ifthenelse{\equal{#1}{C}}{%
				\item[\textbf{Mathematische Symbole 2}]}{%
				\ifthenelse{\equal{#1}{D}}{%
					\item[\textbf{Mathematische Symbole 3}]}{%
					\ifthenelse{\equal{#1}{E}}{%
						\item[\textbf{Mathematische Symbole 4}]}{%
	}}}}}%
	\item[~]
	\let\makelabel\nomlabel
}
\crefname{equation}{}{}  
\crefname{ass}{assumption}{assumptions}
\crefname{cor}{Corollary}{Corollaries}
\crefname{lem}{Lemma}{Lemmata}
\crefname{proof}{Proof}{Proofs}
\crefname{defi}{Definition}{Definitions}
\crefname{enumi}{}{}
\def\myEnumCounter#1{\expandafter\@myEnumCounter\csname c@#1\endcsname}
\def\@myEnumCounter#1{\ifcase#1\or A1\or A2\or A3\or S1\or S1*\or S2\fi}
\def\myEnumCounterG#1{\expandafter\@myEnumCounterG\csname c@#1\endcsname}
\def\@myEnumCounterG#1{\ifcase#1\or G2\or G3\or G4\fi}
\def\myEnumCounterGG#1{\expandafter\@myEnumCounterGG\csname c@#1\endcsname}
\def\@myEnumCounterGG#1{\ifcase#1\or G1\fi}
\def\myEnumCounterS#1{\expandafter\@myEnumCounterS\csname c@#1\endcsname}
\def\@myEnumCounterS#1{\ifcase#1\or S2*\fi}
\def\myEnumCounterE#1{\expandafter\@myEnumCounterE\csname c@#1\endcsname}
\def\@myEnumCounterE#1{\ifcase#1\or E1\fi}
\AddEnumerateCounter{\myEnumCounter}{\@myEnumCounter}{S1* }
\AddEnumerateCounter{\myEnumCounterG}{\@myEnumCounterG}{G2 }
\AddEnumerateCounter{\myEnumCounterGG}{\@myEnumCounterGG}{G1 }
\AddEnumerateCounter{\myEnumCounterS}{\@myEnumCounterS}{S2* }
\AddEnumerateCounter{\myEnumCounterE}{\@myEnumCounterE}{E1 }
 \newlist{assenum}{enumerate}{1} 
\setlist[assenum]{label=\textbf{\myEnumCounter*},ref=\myEnumCounter*} 
 \newlist{assenumG}{enumerate}{1} 
\setlist[assenumG]{label=\textbf{\myEnumCounterG*},ref=\myEnumCounterG*}
 \newlist{assenumGG}{enumerate}{1} 
\setlist[assenumGG]{label=\textbf{\myEnumCounterGG*},ref=\myEnumCounterGG*}
 \newlist{assenumS}{enumerate}{1} 
\setlist[assenumS]{label=\textbf{\myEnumCounterS*},ref=\myEnumCounterS*}
 \newlist{assenumE}{enumerate}{1} 
\setlist[assenumE]{label=\textbf{\myEnumCounterE*},ref=\myEnumCounterE*}
\begin{document}
\title{Volume Doubling Condition and a Local Poincar\'{e} Inequality on Unweighted Random Geometric Graphs}

\author{Franziska G\"obel}
\author{Gilles Blanchard}
\affil{ Institute of Mathematics\\ University of Potsdam, Germany }
\affil{\large\textit {goebel@uni-potsdam.de}\\  \textit{gilles.blanchard@math.uni-potsdam.de}}

\date{July 5, 2019}

\maketitle


\begin{abstract}
	The aim of this paper is to establish two fundamental measure-metric properties of particular random geometric graphs. We consider $\varepsilon$-neighborhood graphs whose vertices are drawn independently and identically distributed from a common distribution defined on a regular submanifold of $\RZ^K$. 
	We show that a volume doubling condition (VD) and local Poincaré inequality (LPI) hold for the random geometric graph
	(with high probability, and uniformly over all shortest path distance balls in a certain radius range)
	under suitable regularity conditions of the underlying submanifold  and the sampling distribution.
	\end{abstract}

        \section*{Acknowledgment}
        	Both authors acknowledge support for this project
	from the German Research Foundation (DFG research group FOR1735 - Structural Inference in Statistics: Adaptation and Efficiency).
        The first author acknowledges mobility support from the UFA-DFH through the French-German Doktorandenkolleg CDFA 01-18.
\section{Introduction}

The aim of this paper is to establish fundamental measure-metric properties of particular random geometric graphs.
The motivation for this study comes from high-dimensional data analysis. Informally speaking, assuming a data sample $(X_1,\ldots,X_n)$
taking values in $\mathbb{R}^K$ is drawn independently and identically distributed from a common distribution $\mathbb{P}$, if the ambient dimensionality $K$ is large, most statistical estimation problems (for example, regression
where each point $X_i$ comes with an associated real-valued random label $Y_i$ and the goal is to estimate the function
$f(x) = E[Y|X=x]$) will suffer from the ``curse of dimensionality'', that is, convergence rates (as a function of $n$) to the estimation target will be hopelessly slow in a minimax sense (in the regression example, this is the case if
$\mathbb{P}$ is comparable to Lebesgue on an open set of $\mathbb{R}^K$, and for any given classical smoothness class of the target function.) To alleviate the high dimensionality issue, additional structural assumptions on the distribution
have to be made to reduce the inherent complexity of the problem. Such structural assumptions can be of 
very different nature; in the present work, we focus on the often considered assumption that the support
of $\mathbb{P}$ is a regular submanifold $\mathcal{M}$ of $\mathbb{R}^K$ (of dimension $k \ll K$).

In such a setting, a central issue is that the supporting submanifold $\mathcal{M}$ is unknown. A fundamental tool introduced to recover, implicitly or explicitly, this unknown structure is to construct a neighborhood graph based
on the observed sample, that is, a geometrical graph whose vertices are the sample points themselves, and edges join
neighbor points, defined in a suitable sense based on the ambient Euclidean distance in $\mathbb{R}^K$.
In this work we consider the simple case of $\varepsilon$-neighborhood graphs, where neighbor points are those whose
ambient Euclidean distance to each other is less than $\varepsilon$ (a fixed in advance constant). Such graphs have been considered for data analysis
purposes since the seminal works \cite{Tenenbaum2000,TechreportIsomap,Belkin2003}.  
In particular \cite{TechreportIsomap} 
show that under suitable regularity
assumptions, the Euclidean path distance in the neighborhood graph is (with high probability with respect to
the data sampling) a good approximation of
the geodesic distance on $\mathcal{M}$; 
and \cite{Belkin2003} 
highlight the central role of the
graph Laplacian operator, and its spectral decomposition, as a fundamental data analysis tool. Over the years a rich
literature has developed exploring the mathematical properties of these objects. A central point of interest is to quantify to which extent the properties of the discrete random graph reflect those of the underlying submanifold, and further if convergence occurs in a suitable sense as the number of sampled points $n$ grows to infinity.

Our primary contribution in this work is to establish that two fundamental geometric properties, namely
a volume doubling condition (VD) and a local Poincaré inequality (LPI) hold for the random geometric graph
(with high probability, and uniformly over all shortest path distance balls in a certain radius range)
under suitable regularity conditions of the underlying submanifold $\mathcal{M}$ and the sampling distribution $\mathbb{P}$.
Informally speaking, we assume that $\mathcal{M}$ is compact without boundary and with bounded curvature in a
suitable sense; for (VD) we assume that $(\mathcal{M},\mathbb{P})$, as a metric measure space, itself satisfies (VD); and for (LPI), we consider the stronger assumption that $(\mathcal{M},\mathbb{P})$ is Ahlfors-regular.

The main motivation for establishing these two properties is that they imply \cite{Delmotte1999,Barlow2016} a precise sub-Gaussian estimate for the heat kernel on the graph, which is generated by the (random walk) graph Laplacian. In turn,
such estimates allow to establish the spatial localization properties of a graph wavelet construction based on
the spectral decomposition of the graph Laplacian, proposed in  \cite{Goebel2018} 
and following a general construction due to \cite{Coulhon2012}, where the sub-Gaussian estimate plays a central role.
We will return to these issues for a more detailed discussion in Section~\ref{sec:heatkernel}.

The outline of the rest of the paper is as follows. 
In Section~\ref{sec:results} we present the setting, the notations and the main results of this paper.
Sections~\ref{sec:prelim}, \ref{VD}, and~\ref{Poin} are devoted to prove the main results.  Section~\ref{sec:prelim} deals with the distance approximation of $\distb{G,SP}$ and $\distb{\manifold{M}}$ which is fundamental for our results 
, and presents further preliminary results.
In Section~\ref{VD} we establish \cref{theo::vdoka} concerning the (VD) property.
In Section~\ref{Poin} we present the proof of~\cref{theo::LPIspPi2} concerning the (LPI) property.

\section{Main results}\label{sec:results}

\subsection{Setting, basic notations and goals}\label{sec:setting}
We consider a specific class of geometric graphs. To be more precise we consider unweighted random $\varepsilon$-graphs whose vertex set is assumed to be a finite random sample from a submanifold of the Euclidean space $\RZ^K$ (considered as metric measure space $(\manifold{M}, \distb{\manifold{M}}, \mu)$) satisfying some properties introduced later on.
We will now present the setting and the notation used throughout this paper. \\



A finite geometric graph $G=(\mathcal{V},\mathcal{E})$ consists of a finite vertex set $\mathcal{V}=(x_1,\ldots, x_n)\subset\RZ^K$ ($x_i$ are assumed to be distinct) and an edge set $\mathcal{E}\subset \mathcal{V}\times \mathcal{V}$. We will use the notation $\mathcal{V}(G)$ and $\mathcal{E}(G)$ to denote the vertex and edge set for a specific graph $G$.
The graph can be described by its adjacency matrix $\mathbf{A}=(a_{ij})$ with $a_{ij}=1$ if there is an edge $e$ from $x_i$ to $x_j$ (denoted by $x_i\sim x_j$) and $a_{ij}=0$ otherwise.  
We will only consider undirected graphs without self-loops, that is
 $a_{ij}=a_{ji}$ for $i\neq j$  and $a_{ii}=0$ ($\mathbf{A}$ is symmetric).
The degree of a vertex $x_i$ is $\grad(x_i)=\sum_{x_j\in \mathcal{V}}a_{ij}$.
We denote the minimal degree in the graph  $\grad_{min}:=\min_{x\in \mathcal{V}} \grad(x)$ and the maximal degree $\grad_{max}:=\max_{x\in \mathcal{V}} \grad(x)$. 

\nomenclature[Cz]{$G=(\mathcal{V},\mathcal{E})$}{graph, vertex set, edge set} 
\nomenclature[Cz]{$e$}{edge}
\nomenclature[Cz]{$\mathbf{A}=(a_{ij})$}{adjacency matrix}  
\nomenclature[Cz]{$\grad$}{degree of a vertex}
\nomenclature[Cz]{$\grad_{min}, \grad_{max}$}{minimaler, maximaler grad im Graphen}  

We focus on neighborhood graphs, especially on $\varepsilon$-graphs, that is, for the construction of the edges the $\varepsilon$-rule is used. Two vertices (points)  are connected when their Euclidean distance (denoted $\distb{E}$) is smaller than $\varepsilon$: 
\begin{align} (x_i,x_j)\in \mathcal{E(G)} \Leftrightarrow a_{ij}=1~ \Leftrightarrow~ \dist{E}{x_i,x_j}= \norm{x_i-x_j}\leq \varepsilon.\label{constr}\end{align} 
By construction $\varepsilon$-graphs are undirected graphs. 
\nomenclature[Cz]{$\varepsilon$}{epsilon for $\varepsilon$-graph} 
\nomenclature[Cz]{$\distb{E}$}{Euclidean distance} 

A natural distance on the graph is the so-called shortest-path-distance $\distb{G,SP}$ ($\distb{SP}$ for short). A path $p$ from $x$ to $y$ in $G$ is a finite sequence $p=(v_0,\ldots,v_l)$ of vertices $v_i\in \mathcal{V}(G)$ with  $x=v_0, y=v_l$ satisfying $v_{i-1}\sim v_i$ for $i=1,\ldots,l$.
We denote $\NE{p}=l$ the number of edges of a path $p$ in a graph. Let $\mathcal{P}_{x,y}$ be the set of all paths in $G$ connecting $x$ to $y$.
We define for $x,y\in \mathcal{V}(G)$ the shortest-path-distance
\begin{align*}\dist{G,SP}{x,y}:=\min_{p\in \mathcal{P}_{x,y}} \NE{p}.\end{align*}
We denote for all $x\in \mathcal{V}$ by $\oB{x}{r}{G,SP}$ the open ball in $\mathcal{V}$ of radius $r$ centered in $x$ for the shortest-path distance and by $\cB{x}{r}{G,SP}$ the corresponding closed ball.
Note that the maximal shortest-path distance of two points in a connected graph with $n$ vertices is at most $n$, so that $\cB{x}{n}{G,SP}=\mathcal{V}$. 
\nomenclature[Cz]{$p$}{path in a graph} 
\nomenclature[Cz]{$\mathcal{P}_{x,y}$}{set of all paths connecting $x$ and $y$} 
\nomenclature[Cz]{$\NE{p}$}{number of edges of a path} 
\nomenclature[Cz]{$\distb{G,SP}$}{shortest path distance} 

Let $\eta$ be a discrete probability measure on the vertex set $\mathcal{V}$ of a graph $G$ determined by the point measure $\eta(x)$ for all $x\in \mathcal{V}$. 
Especially, we denote $\eta_1$ the empirical graph measure 
(based on number of vertices) \begin{align}\eta_{1}(W):=\frac{1}{n} \sum_{i=1}^{n}\Indik_{x_i\in W} \quad \text{~~for any~~} W\subset \mathcal{V},
\end{align}
where $\Indik_{x\in W}=1$ if $x\in W$ and $0$ otherwise,
and $\eta_2$  the (normalized) degree volume graph measure (
based on number of edges)
\begin{align}\label{eta2}
\eta_{2}(W):= \frac{\vol{W}}{\vol{\mathcal{V}}}
 \quad\text{~~for any~~}  W\subset \mathcal{V},
\end{align}
with $\vol{W}:=\sum_{i=1}^{n} \Indik_{x_i \in W} \grad(x_i)$.

With some abuse of notation we will also denote $\eta$ the measure on $(\manifold{M}, \mathcal{A})$ for any superset $\mathcal{M}\supseteq \mathcal{V}$ and $\sigma$-algebra
containing all singletons of $\mathcal{V}$.
\nomenclature[Cz]{$\eta$}{ measure on graph vertices} 
\nomenclature[Cz]{$\eta_1$}{empirical graph measure}
\nomenclature[Cz]{$\eta_2$}{degree volume graph measure}
  
We now recall the definitions of the volume doubling condition (VD) and the local Poincar\'{e} inequality (LPI) for graphs, the properties we want to prove in our specific setting.
Since $(G, \distb{SP}, \eta)$ is a metric measure space the following definition of volume doubling applies.
\begin{defi}[volume doubling]\label{def::vd}
	Let $(M,\distb{M}, \mu)$ be a metric  measure space with distance $\distb{M}(\cdot,\cdot)$
	and measure $\mu$. 
	Then $(M,\distb{M}, \mu)$ is said to satisfy the volume doubling condition (VD($v$))
	 if there exists a constant $v>0$ ($2^v\geq 1$) such that for all $x\in M$ and for all $r>0$:
	\begin{align}\label{doubl}
	0<\mu( \oB{x}{2r}{\distb{M}})\leq 2^v ~ \mu( \oB{x}{r}{\distb{M}})< \infty  
	\end{align}
	(where $\oB{x}{r}{\distb{M}}$ denotes the open ball in $M$ of radius $r$ centered at $x$  w.r.t.\ $\distb{M}$).
	
	The space $(M,\distb{M}, \mu)$ satisfies the restricted volume doubling condition\linebreak (rVD[$v ,r_{min}, r_{max}$]) if the volume doubling condition is satisfied with parameter $v$ for all balls $\oB{X}{r}{\distb{M}}$ with $x\in M$ and $r_{min}<r<r_{max}$. 
\end{defi}

\nomenclature[Cz]{$u$}{doubling constant (graph)} 

\begin{defi}[(weak) Local Poincar\'e Inequality for graphs]\label{defi-lpi-Barlow}
		A graph $G=(\mathcal{V},\mathcal{E})$ satisfies a local Poincar\'{e} inequality LPI($\lambda, C_{\lambda}, r_{max}$) with $\lambda\geq 1$, $C_{\lambda}\in\RZ, r_{max}>0$ if
		\begin{align*}
	\sum_{x\in \cB{x_0}{r}{}} \grad(x) \abs{f(x)-\overline{f}_{\cB{x_0}{r}{}}}^2 \leq C_{\lambda} r^2 \sum_{\substack{x,y \in \cB{x_0}{\lambda r}{},\\ x\sim y}}  (f(y)-f(x))^2
	\end{align*}
	\[(\text{where~} \overline{f}_{\overline{B}}:=\left(\vol(\overline{B})\right)^{-1}\sum_{x\in \overline{B}} \grad(x) f(x) \text{~and~} \vol(\overline{B})=\sum_{y\in \overline{B}} \grad(y))\]
 holds for all $f\in \RZ^{\abs{\mathcal{V}}}$ and for all closed balls $\cB{x_0}{r}{}=\cB{x_0}{r}{G,SP}$ of radius $r\leq r_{max}$ centered in $x_0 \in \mathcal{V}$ of $G$ w.r.t.\ $\distb{G,SP}$. 
		The LPI is called strong if $\lambda=1$.
\end{defi}
\nomenclature[Cz]{$C_{\lambda}$}{LPI constant (def)} 

Important to note is that in this paper the vertex set $\mathcal{V}$ consists of $n$ points which are drawn independent and identically distributed from  a  metric measure space $(\manifold{M}, \distb{\manifold{M}}, \mu)$ satisfying $\manifold{M}\subset\RZ^K$ with respect to the probability measure  $\mu$. 
Many properties of the graph will therefore depend on the properties of $(\manifold{M}, \distb{\manifold{M}}, \mu)$.
Most statements in this paper are probabilistic statements,  relative to the random sample of vertices.
Since the vertex set is random, the graph itself is random and  the graph measures are random variables.
Our main goal is to establish that, when the underlying space $\manifold{M}$ has suitable geometric regularity properties, then the random neighborhood graph satisfies the above properties (VD) and (LPI) with high probability growing to 1 as $n \rightarrow \infty$, 
with fundamental scaling constants $(v,C_\lambda)$ not depending on $n$. 
\nomenclature[Cz]{$(\manifold{M}, \distb{\manifold{M}}, \mu)$}{manifold, geodesic distance, measure} 
\nomenclature[Cz]{$k$}{dimension of data}
\nomenclature[Cz]{$\pi$}{pi}

\subsection{Asymptotics}
We consider $\varepsilon$-graphs constructed from a sample of size $n$.
We are interested in how the constants in our results depend on the parameters $n$ and $\varepsilon$ and what happens in the limit for $n$ going to infinity. The parameter $\varepsilon=\varepsilon(n)$ will be a 
 decreasing sequence going to zero, but not too fast. To be more precise we consider as standard asymptotics the case that \cref{ass::Ahlfors} (see next section)
is satisfied and that
\begin{align}\label{asym}
\varepsilon(n)\rightarrow 0 \text{~and~}
\frac{n\varepsilon(n)^k}{\ln(n)}\rightarrow \infty \text{~for~}n\rightarrow \infty
\end{align}
holds where $k$ is the intrinsic dimension of the underlying space and parameter of the Ahlfors regularity. This means that $\varepsilon(n)^k\in \Omega\left(\frac{\ln(n)}{n}\right)$.
This assumption will ensure that some probabilities occurring later on will converge to 0 and that some conditions on $n$ and $\varepsilon$ we encounter to guarantee some properties of the graph will be satisfied for $n$ large enough. 
In fact, \cref{asym} is a sufficient condition (in $n,\varepsilon$) for 
$n^\gamma \exp\left(-c n \varepsilon^k \right)$ and $\varepsilon^{-\gamma} \exp\left(-c n \varepsilon^k \right)$ tending to zero for $n\rightarrow \infty$, for any fixed positive constants $c,\gamma$. 

Under the standard asymptotics we have $\grad{x}\approx n\varepsilon^k \geq \ln n$ for $n$ large enough.
A slightly stronger assumption would be to assume that 
$n\varepsilon^k \geq n^{z}$ for some $z\in (0,1)$ for $n$ large enough (since $\bigO(\ln(n))\subseteq \bigO(n^{z})$).
Note that under the Ahlfors assumption (\cref{ass::Ahlfors}) $z$ is a measure of the connectivity of the graph: $z=0$ implies that no vertex is isolated (each vertex has at least one neighbor) and $z=1$ describes the fully connected graph. We are interested in a connected graph which reveals the local geometry of the underlying manifold.

\subsection{Main results}\label{sec::mainRes}

We formulate the following assumptions in order to state our results.

\nomenclature[Cz]{$c_l, c_u$}{Ahlfors constants} 
\nomenclature[Cz]{$r$}{ball radius} 

\nomenclature[Cz]{$k$}{dimension of submanifold} 
\begin{ass}

	\begin{assenum}[align=left,labelwidth=60pt,labelsep=0pt, leftmargin=*]
	
\item \label{ass::manifold}  
$\manifold{M}$ 
is a $k$-dimensional smooth compact submanifold of $\RZ^K$ without boundary,  with induced geodesic distance $\distb{\manifold{M}}$ and a Borel probability measure $\mu$  
with support in $\manifold{M}$. We define (as in  \cite{TechreportIsomap}) the  minimum radius of curvature $\crr$ and  the minimum branch separation $\cs$ of $\manifold{M}$ as
\begin{align*}\crr&:= \left(  \max_{\gamma,t}\norm{\ddot{\gamma}(t)} \right)^{-1} ~~\text{~ (with $\gamma$ being unit-speed geodesics) ~}\\
\cs:&=\max \{s : s>0 \text{~and~}  \norm{x-y} < s \Rightarrow \dist{\manifold{M}}{x,y}\leq \pi \crr  \text{~for~}  x,y\in \manifold{M}\}.
\end{align*}
Furthermore, let $\secK(\manifold{M})$ denote the sectional curvature of $\manifold{M}$ and $i(\manifold{M})$ the injectivity radius of $\manifold{M}$.

		
			\item \label{ass::graph} 
		$G=(\mathcal{V},\mathcal{E})$ is a (connected) undirected unweighted $\varepsilon$-graph 
		without self-loops with $\mathcal{V}$ consisting of $n$ vertices. 
		
		\item \label{ass::isomap} 
		For fixed parameters $\lambda_1, \lambda_2, \pr{1} \in (0,1)$ 
		 \begin{enumerate}[label=\alph*) , labelsep=3pt]
		 	\item the $\varepsilon$-graph is built with parameter 
		 	$\varepsilon>0$ 
		 	satisfying
		 	\begin{align}\label{A3eps}
		 	\varepsilon<\min\left(\cs,  (2/\pi)\crr \sqrt{24 \lambda_1}\right),
		 	\end{align}
		 	\item 
		 	the  sample size $n=n(\pr{1},\lambda_2,\varepsilon, \mu)$ satisfies 	
		 	\begin{align} \label{A3n}
		 	n \geq \frac{-\ln(\pr{1} u)}{u} \text{~~with~~~} u=
		 	\inf_{z\in\manifold{M}} \mu(B(z, \varepsilon \lambda_2 / 16)).
		 	\end{align} 
		 \end{enumerate}
	\item \label{ass::vd}  
		$(\manifold{M}, \distb{\manifold{M}}, \mu)$ satisfies the  volume doubling condition with constant $v$:\\
		there exists a constant $v>0$ ($2^v\geq 1$) such that for all $x\in \manifold{M}$ and for all $r>0$
		\begin{align}\label{doubl2}
		0<\mu( \oB{x}{2r}{\manifold{M}})\leq 2^v ~ \mu( \oB{x}{r}{\manifold{M}})< \infty  
		\end{align}
		where $	\oB{x}{r} {\manifold{M}}$ denotes the open ball of radius $r$ and centered in $x$ w.r.t.\ $\distb{\manifold{M}}$.
		\item \label{ass::Ahlfors} 
	$\mu$  is $k$-Ahlfors-regular:\\ 
		there exist constants $0<c_l\leq c_u<\infty$ such that for all $x\in \manifold{M}$ and all $r\in \left(0,\diam{\manifold{M}}\right]$
	\begin{align}
	c_l r^k \leq \mu(\oB{x}{r}{\manifold{M}}) \leq c_u r^k
	\end{align}
	holds.
	\item \label{ass::secCurv}
	The sectional curvature of $\manifold{M}$ is bounded:
	\begin{align}\secK(\manifold{M})\leq \Lambda < \infty.\end{align}
	We define \begin{align}r_{\bullet}:=\min \left(\frac{i(\manifold{M})}{2},\frac{\pi}{2\sqrt{\Lambda}}\right).\end{align}
	\end{assenum}
\end{ass}
\nomenclature[Cz]{$\lambda_1, \lambda_2$}{accuracy of distance approximation}
\nomenclature[Cz]{$n$}{sample size, number of points in $\mathcal{V}$}
\nomenclature[Cz]{$\pr{1}$}{probability related to sampling condition}
\nomenclature[Cz]{$d$}{dimension}
\nomenclature[Cz]{$L_{min}^*$}{lower bound bilipschitz}   
\nomenclature[Cz]{$\Lambda$}{upper bound of sectional curvature }
\Cref{ass::manifold,ass::graph} describe our setting. \Cref{ass::isomap} enables us to use a distance approximation proposed in \cite{TechreportIsomap} which is of importance for both the (VD) result and the local Poincar\'e inequality. For the (VD) result we need \cref{ass::vd}. The stronger \cref{ass::Ahlfors} and additionally \cref{ass::secCurv} are necessary for (LPI). Furthermore the dimension $k$ of $\manifold{M}$ and the parameter $k$ of the Ahlfors regularity will coincide.
 \begin{remark}\label{rem:A3}
   In our asymptotic regime \cref{ass::isomap} is satisfied for $n$ large enough with probability tending to 1. Note that in the asymptotic regime $\varepsilon(n)\rightarrow 0$ for $n$ going to infinity and therefore \cref{A3eps} is satisfied for $n$ large enough. Furthermore,  if we set $\pr{1}:=n^{-z}$ for $z>0$ we observe that under Ahlfors regularity the condition \cref{A3n} is satisfied if  \[n \geq \frac{ \ln(C_1^{-1} n^z\varepsilon^{-k})}{C_1\varepsilon^k} 
   \geq\frac{-\ln(\pr{1}u)}{u}\]
   with constant $C_1=c_l (\lambda_2 /16)^k$. Since the standard asymptotic regime implies $\varepsilon^{-k}\leq n$ for $n$ big enough, we obtain $ C_1^{-1} \ln(C_1^{-1} n^z\varepsilon^{-k}) \leq c \ln(n) \leq n\varepsilon^{k}$ for $n$ big enough for some constant $c$. This means that our standard asymptotic regime implies \cref{A3n}.
    \end{remark}
We can now state our first result.

\begin{theorem}[restricted Volume Doubling]
	\label{theo::vdoka}
		Let $G=(\mathcal{V}, \mathcal{E})$ be an $\varepsilon$-graph defined from an i.i.d. sample of size $n$ from  the probability  measure $\mu$ on the submanifold $\manifold{M}$ of $\RZ^K$ such that \cref{ass::manifold,ass::graph,ass::isomap,ass::vd} are satisfied with parameters  $\lambda_1\in (0,1), \lambda_2\in (0,1), \pr{1}\in (0,1), \varepsilon>0, n\geq 4, n\in \NZ, v>0$.
Let $\pr{2}\in (0,0.5]$.
Let $\eta_1$ 
be the empirical graph measure.
  
Then, 
with probability at least $1-\pr{1}-\pr{2}$, there exists a constant $u=u(v)>0$ such that for all open balls $B=B_{G,SP}(X_i,r)$ with $X_i\in \mathcal{V}$ and $r>1$ 
satisfying
$\eta_1\left(\oB{X_i}{r}{G,SP}\right)\geq 8 n^{-1}\ln\left(\frac{3n^2}{\pr{2}}\right)$ 
the inequality
\begin{align}
\eta_1(\oB{X_i}{2r}{G,SP}) \leq 2^u \eta_1 (\oB{X_i}{r}{G,SP})
\end{align}  
holds. The constant is $u:=\log_2(6)+\ceil*{4+ \log_2 (1+\lambda_2)(1-\lambda_1)^{-1}}v$. 
\end{theorem}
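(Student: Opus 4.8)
The plan is to transfer the volume doubling condition from the metric measure space $(\manifold{M}, \distb{\manifold{M}}, \mu)$ to the graph $(\mathcal{V}, \distb{G,SP}, \eta_1)$ by sandwiching the shortest-path balls between Euclidean/geodesic balls of comparable radii, and then controlling the empirical measure $\eta_1$ by the true measure $\mu$ via a concentration argument. Concretely, by \cref{ass::isomap} and the distance approximation results of Section~\ref{sec:prelim} (à la \cite{TechreportIsomap}), for vertices $X_i, X_j \in \mathcal{V}$ one has a two-sided comparison of the form $c_1 \dist{\manifold{M}}{X_i,X_j} \le \varepsilon \dist{G,SP}{X_i,X_j} \le c_2 \dist{\manifold{M}}{X_i,X_j}$ (up to additive $O(\varepsilon)$ terms), with constants tied to $\lambda_1, \lambda_2$. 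Hence there are inclusions $\mathcal{V}\cap \oB{X_i}{a r \varepsilon}{\manifold{M}} \subseteq \oB{X_i}{r}{G,SP} \subseteq \mathcal{V}\cap \oB{X_i}{b r \varepsilon}{\manifold{M}}$ for suitable $a < b$ depending on $(1+\lambda_2)(1-\lambda_1)^{-1}$; this is exactly where the exponent $\ceil*{4+ \log_2 (1+\lambda_2)(1-\lambda_1)^{-1}}$ will come from, since passing from a ball of radius $b r\varepsilon$ down to one of radius $a r \varepsilon/2$ costs $\log_2(b/a) + 1$ applications of \cref{ass::vd}, each worth a factor $2^v$.

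The second ingredient is concentration of $\eta_1$ around $\mu$. For a fixed ball $B$, $n\,\eta_1(B)$ is $\mathrm{Bin}(n,\mu(B))$, so a multiplicative Chernoff bound gives that $\eta_1(B)$ and $\mu(B)$ agree up to constant factors as soon as $\mu(B)$ (equivalently $\eta_1(B)$) is at least of order $n^{-1}\ln(\text{number of balls})$; the threshold $8 n^{-1}\ln(3n^2/\pr{2})$ in the statement is precisely this, with the $3n^2$ accounting for a union bound over the relevant family of balls. I would first argue that it suffices to control the at most $n^2$ distinct shortest-path balls $\oB{X_i}{r}{G,SP}$ for integer $r$ (and their doublings), intersect everything with $\mathcal{V}$, and apply the union bound so that, with probability at least $1-\pr{2}$, simultaneously for all these balls $B$ with $\eta_1(B)$ above threshold we have $\eta_1(B) \asymp \mu(B \cap \manifold{M})$ — more precisely $\tfrac12 \mu \le \eta_1 \le \tfrac32 \mu$ on the geodesic balls involved, which is where the extra $\log_2 6 = \log_2(2\cdot 3)$ and the $+4$ slack enter. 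The event in \cref{ass::isomap}, which controls the distance approximation, fails with probability at most $\pr{1}$, giving the final $1-\pr{1}-\pr{2}$.

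Putting it together: on the good event, for a ball $B = \oB{X_i}{r}{G,SP}$ with $\eta_1(B)$ above threshold,
\[
\eta_1(\oB{X_i}{2r}{G,SP}) \le \tfrac{3}{2}\mu\bigl(\oB{X_i}{2br\varepsilon}{\manifold{M}}\bigr) \le \tfrac{3}{2} 2^{mv}\mu\bigl(\oB{X_i}{ar\varepsilon}{\manifold{M}}\bigr) \le \tfrac{3}{2}\cdot 2^{mv}\cdot 2\,\eta_1\bigl(\oB{X_i}{r}{G,SP}\bigr),
\]
with $m = \ceil*{4+\log_2 (1+\lambda_2)(1-\lambda_1)^{-1}}$ chosen large enough that $2br\varepsilon \le 2^m \cdot a r \varepsilon$ (using $r>1$ to absorb the additive $O(\varepsilon)$ distortion terms into the multiplicative slack), yielding the factor $2^{u}$ with $u = \log_2 6 + m v$. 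Care is needed to check that the inner geodesic ball $\oB{X_i}{ar\varepsilon}{\manifold{M}}$ is genuinely contained in the graph ball so that the lower concentration bound applies to it; this is the reason for keeping separate the "number of vertices in a geodesic ball" and the "volume $\mu$ of that ball", and it is handled by the same distance-approximation inclusions together with the choice of $n$ in \cref{A3n} guaranteeing no geodesic ball of the relevant radius is empty of sample points.

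The main obstacle I anticipate is the bookkeeping at the boundary between the two metrics: the distance approximation is only two-sided up to a multiplicative constant \emph{and} an additive $O(\varepsilon)$ term, so one must be careful that the chain of inclusions survives for all radii $r>1$ simultaneously (not just asymptotically large $r$), and that the union bound is taken over a genuinely finite, explicitly-sized family of balls so that the $\ln(3n^2/\pr{2})$ threshold is honest. The curvature assumptions are not needed here — only \cref{ass::manifold,ass::graph,ass::isomap,ass::vd} — so the argument is purely a comparison-of-balls plus Chernoff bound; the delicate point is making the constants $a,b$ and hence $m$ come out to exactly the claimed closed form rather than just "some constant".
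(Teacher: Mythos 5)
Your overall strategy is the same as the paper's: sandwich shortest-path balls between geodesic balls via the distance approximation of \cref{cor::dist}, iterate the manifold doubling condition to collapse the outer geodesic ball down to one contained in the target graph ball, and control $\eta_1$ against $\mu$ by concentration plus a union bound. The chain you write in display math is structurally identical to the chain in the paper's proof of \cref{theo::vdoka}, and the decomposition of $u$ into a piece from concentration slack ($\log_2 6$) plus $\lceil \cdot\rceil v$ from the iterated doubling is the right shape.

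However, the concentration step is genuinely different, and your version has a gap. You propose a \emph{multiplicative} Chernoff bound, $\tfrac12 \mu(B) \le \eta_1(B) \le \tfrac32 \mu(B)$, which only holds (after a union bound) for balls with $\mu(B)$ bounded below by roughly $n^{-1}\ln(n^2/\pr{2})$. The problem is the last step of your chain,
\[
\mu\bigl(\oB{X_i}{ar\varepsilon}{\manifold{M}}\bigr) \le 2\,\eta_1\bigl(\oB{X_i}{ar\varepsilon}{\manifold{M}}\bigr),
\]
which needs the lower multiplicative bound for the \emph{inner} geodesic ball. Your threshold condition is $\eta_1(\oB{X_i}{r}{G,SP})$ being large, which (via the inclusion $\oB{X_i}{r}{G,SP}\subseteq \oB{X_i}{(1-\lambda_1)^{-1}\varepsilon r}{G,\manifold{M}}$) controls a \emph{larger} geodesic ball; it gives no direct lower bound on $\mu(\oB{X_i}{ar\varepsilon}{\manifold{M}})$. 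Propagating a lower bound down through VD introduces a factor $2^{-mv}$ and would require a threshold inflated by $2^{mv}$, changing the constants. The paper avoids this entirely via \cref{theo::ubOka-a} (Okamoto's inequality), which gives a uniform \emph{additive} bound $\bigl|\sqrt{\eta_1(B)}-\sqrt{\mu(B)}\bigr| \le \delta$ for all geodesic balls at once, hence $\eta_1 \le \tfrac32 \mu + 3\delta^2$ and $\mu \le \tfrac32\eta_1 + 3\delta^2$ with \emph{no} condition on the individual ball's measure. The additive $\delta^2$-terms then all get absorbed at the very end by the single threshold $\eta_1(\oB{X_i}{r}{G,SP}) \ge 2\delta^2$. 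This is also where $\log_2 6$ comes from ($\tfrac32 \cdot 3 \cdot 2^{mv} + \tfrac32 \le 6\cdot 2^{mv}$), whereas your multiplicative chain would give $\tfrac32\cdot 2 = 3$, i.e.\ $\log_2 3$, so the constants don't come out as claimed. A secondary imprecision: uniformity in $r$ in \cref{theo::ubOka-a} is not obtained by a union bound over "$n^2$ shortest-path balls for integer $r$" but by a monotonicity/order-statistics argument reducing $\sup_{r>0}$ to a max over the $n+1$ sorted distances $\distb{\manifold{M}}(X_i,X_j)$ for each fixed center, which is a different (and necessary) reduction since the intermediate radii in the chain are non-integer multiples of $\varepsilon$.
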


The condition $\eta_1\left(\oB{X_i}{r}{G,SP}\right)\geq 8 n^{-1}\ln\left(\frac{3n^2}{\pr{2}}\right)$ 
means that the doubling condition only applies to balls containing at least of the order of $\ln(n)$ points. In particular, under the standard asymptotics, for $n$ big enough, it will be satisfied for any $r>1$.

\begin{cor}\label{cor::vd-emp}
	Under the "standard asymptotics", the graph $G$ satisfies
	(rVD[$u, r_{min}=1, r_{max}=\infty$]) (with $u$ as above)  w.r.t.\ the empirical graph measure with probability going to 1 (and even probability $p$ such that $1-p=o(n^{-2})$).
\end{cor}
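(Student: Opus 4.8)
The plan is to read this off directly from \cref{theo::vdoka}. That theorem already supplies, on an event of probability at least $1-\pr{1}-\pr{2}$, the inequality $\eta_1(\oB{X_i}{2r}{G,SP})\leq 2^u\eta_1(\oB{X_i}{r}{G,SP})$ simultaneously for all open balls of radius $r>1$ whose mass satisfies $\eta_1(\oB{X_i}{r}{G,SP})\geq 8n^{-1}\ln(3n^2\pr{2}^{-1})$, with $u=u(v)$ not depending on $n$. So the only work, under the standard asymptotics and for $n$ large, is: (i) that \cref{ass::manifold,ass::graph,ass::isomap,ass::vd} hold; (ii) that $\pr{1},\pr{2}$ can be taken to decay faster than $n^{-2}$ without spoiling (i) or the mass condition; and (iii) that on a further event of probability $1-o(n^{-2})$ \emph{every} ball of radius $r>1$ meets the mass lower bound, so that \cref{theo::vdoka} delivers doubling for all open balls of radius $r>1$, i.e.\ exactly (rVD[$u,1,\infty$]).

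For (i): \cref{ass::manifold,ass::graph} are part of the setup (connectivity, if not already built into \cref{ass::graph}, holds with probability $1-o(n^{-2})$ under \cref{asym} by the standard connectivity result for $\varepsilon$-graphs); \cref{ass::isomap} holds for $n$ large by \cref{rem:A3}; and \cref{ass::vd} follows from $k$-Ahlfors regularity (\cref{ass::Ahlfors}, which is in force under the standard asymptotics), since comparing $\mu(\oB{x}{2r}{\manifold{M}})$ with $\mu(\oB{x}{r}{\manifold{M}})$ via \cref{ass::Ahlfors} for $r\leq\diam{\manifold{M}}/2$, and using that balls of radius $>\diam{\manifold{M}}$ exhaust $\manifold{M}$ for larger $r$, yields \cref{doubl2} with an $n$-independent $v$. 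For (ii) I would simply take $\pr{1}=\pr{2}=n^{-3}$ (both in $(0,0.5]$ for $n\geq 2$): \cref{A3n} still holds for $n$ large by \cref{rem:A3} applied with $z=3$, and the mass threshold becomes $8n^{-1}\ln(3n^{5})\leq 48\,n^{-1}\ln n$ for $n\geq 3$.

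For (iii), the key observation is that for every $X_i\in\mathcal{V}$ and every $r>1$ the open ball $\oB{X_i}{r}{G,SP}$ contains $X_i$ together with all its neighbours (which are at shortest-path distance exactly $1<r$), whence $\eta_1(\oB{X_i}{r}{G,SP})\geq n^{-1}(1+\grad_{min})$. So it suffices to show $\grad_{min}\geq 48\ln n$ with probability $1-o(n^{-2})$, which the standard asymptotics overwhelmingly guarantees: conditionally on $X_i$, $\grad(X_i)\sim\BinV(n-1,q_i)$ with $q_i=\mu(\oB{X_i}{\varepsilon}{E})\geq\mu(\oB{X_i}{\varepsilon}{\manifold{M}})\geq c_l\varepsilon^k$ (using $\distb{\manifold{M}}\geq\distb{E}$, so $\oB{X_i}{\varepsilon}{\manifold{M}}\subseteq\oB{X_i}{\varepsilon}{E}$, and \cref{ass::Ahlfors}). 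A multiplicative Chernoff bound then gives $\prob{\grad(X_i)<\tfrac12 c_l(n-1)\varepsilon^k}\leq\exp(-c_l(n-1)\varepsilon^k/8)$, and a union bound over the $n$ vertices gives $\prob{\grad_{min}<\tfrac14 c_l n\varepsilon^k}\leq n\exp(-c_l(n-1)\varepsilon^k/8)$ for $n\geq 2$; by the discussion after \cref{asym} this is $o(n^{-\gamma})$ for every $\gamma>0$, while $\tfrac14 c_l n\varepsilon^k\geq 48\ln n$ for $n$ large because $n\varepsilon^k/\ln n\to\infty$. Hence on this event every ball of radius $r>1$ satisfies the mass condition.

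Intersecting the event of \cref{theo::vdoka} (probability $\geq 1-2n^{-3}$) with the uniform-degree event (and, if needed, the connectivity event) yields, for $n$ large, the doubling inequality for all open balls of radius $r>1$ (for $r$ large enough the ball is already all of $\mathcal{V}$ and doubling is trivial), that is (rVD[$u,1,\infty$]) with $u$ as in \cref{theo::vdoka}, holding with probability $p$ such that $1-p\leq 2n^{-3}+n\exp(-c_l(n-1)\varepsilon^k/8)+o(n^{-2})=o(n^{-2})$. The only genuinely delicate step is (iii): the mass lower bound must be made \emph{uniform over all centres and all radii $r>1$ at once}, which is why it is routed through a single uniform lower bound on $\grad_{min}$; everything else is just feeding the standard-asymptotics slack into the hypotheses of \cref{theo::vdoka} and \cref{rem:A3}.
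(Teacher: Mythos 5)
Your proof is correct and follows essentially the same route as the paper's: set $\pr{1}=\pr{2}$ polynomially decaying, note that for $r>1$ every shortest-path ball contains the center and its neighbors so that $\eta_1(\oB{X_i}{r}{G,SP})\geq n^{-1}\grad(X_i)\geq n^{-1}\grad_{min}$, and then show $\grad_{min}\gtrsim n\varepsilon^k\gg\ln n$ with probability $1-o(n^{-2})$ via a Chernoff/union-bound argument (the paper cites \cref{rem:deg}/\cref{theo::edegrees} for the same conclusion). You are a bit more explicit than the paper in verifying that \cref{ass::vd} follows from \cref{ass::Ahlfors} (via \cref{theo::ahlf}(i)) and in pinning down $z=3$ to get the $o(n^{-2})$ rate, but these are refinements rather than a different approach.
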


Note that we stated the main result for open balls. 
We can formulate the volume doubling Theorem also in terms of closed balls. (The theorem for open balls implies the theorem for closed balls, but not vice versa.) We get the result for closed balls as corollary.

\begin{cor}
		Let $G=(\mathcal{V}, \mathcal{E})$ be an $\varepsilon$-graph defined from an i.i.d. sample of size $n$ from  the probability  measure $\mu$ on the submanifold $\manifold{M}$ of $\RZ^K$ such that \cref{ass::manifold,ass::graph,ass::isomap,ass::vd} are satisfied with parameters  $\pr{1}\in (0,1), \varepsilon>0, n\geq 4, n\in \NZ, v>0$.
	Let $\pr{2} \in (0,0.5]$. 
	Let $\eta_1$ 
	be the empirical graph measure.
	
	Then, with probability at least $1-\pr{1}-\pr{2}$,  there exists a constant $u=u(v)>0$ such that for all closed balls $B=\cB{X_i}{r}{G,SP}$ with $X_i\in \mathcal{V}$ and $r\geq 1$ 
	satisfying
	$ \eta_1\left(\oB{X_i}{r}{G,SP}\right)\geq 8 n^{-1} \ln\left(\frac{3n^2}{\pr{2}}\right)$
	the inequality
	
	\begin{align}
	\eta_1(\cB{X_i}{2r}{G,SP}) \leq 2^u \eta_1 (\cB{X_i}{r}{G,SP})
	\end{align}  
	holds. 	
	The constant is $u:=\log_2(6)+\ceil*{4+ \log_2 (3)}v$. %
\end{cor}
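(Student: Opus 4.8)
The plan is to obtain this closed-ball statement directly from the open-ball theorem \cref{theo::vdoka}, using that the shortest-path distance $\distb{G,SP}$ is integer valued, so that a closed ball of radius $r$ coincides with an open ball of a (half-integer) shifted radius. Fix $X_i\in\mathcal{V}$ and a real $r\geq 1$, and set $\rho:=\tfrac12\bigl(\floor*{2r}+1\bigr)$. Since $r\geq1$ forces $\floor*{2r}\geq2$, we have $\rho\geq\tfrac32>1$, which is the radius range required by \cref{theo::vdoka}. From the integrality of $\distb{G,SP}$ one reads off the two identities
\begin{align*}
\cB{X_i}{2r}{G,SP}=\oB{X_i}{2\rho}{G,SP},\qquad\cB{X_i}{r}{G,SP}=\oB{X_i}{\rho}{G,SP};
\end{align*}
the second holds because $\oB{X_i}{\rho}{G,SP}=\{y:2\,\dist{G,SP}{X_i,y}\leq\floor*{2r}\}=\{y:\dist{G,SP}{X_i,y}\leq\floor*{r}\}=\cB{X_i}{r}{G,SP}$, and the first is checked in the same way.

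Next I would check that \cref{theo::vdoka} applies at radius $\rho$: its mass condition $\eta_1(\oB{X_i}{\rho}{G,SP})\geq 8n^{-1}\ln(3n^2/\pr{2})$ follows from the hypothesis $\eta_1(\oB{X_i}{r}{G,SP})\geq 8n^{-1}\ln(3n^2/\pr{2})$ assumed in the present statement, because $\oB{X_i}{\rho}{G,SP}=\cB{X_i}{r}{G,SP}\supseteq\oB{X_i}{r}{G,SP}$ and $\eta_1$ is monotone under inclusion. Thus, on the event of probability at least $1-\pr{1}-\pr{2}$ furnished by \cref{theo::vdoka} — simultaneously over all $X_i\in\mathcal{V}$ and all admissible radii — the two identities give
\begin{align*}
\eta_1\bigl(\cB{X_i}{2r}{G,SP}\bigr)=\eta_1\bigl(\oB{X_i}{2\rho}{G,SP}\bigr)\leq 2^{u}\,\eta_1\bigl(\oB{X_i}{\rho}{G,SP}\bigr)=2^{u}\,\eta_1\bigl(\cB{X_i}{r}{G,SP}\bigr).
\end{align*}
For the constant, \cref{theo::vdoka} delivers $u=\log_2(6)+\ceil*{4+\log_2\bigl((1+\lambda_2)(1-\lambda_1)^{-1}\bigr)}v$; instantiating \cref{ass::isomap} with $\lambda_1,\lambda_2$ chosen so that $(1+\lambda_2)(1-\lambda_1)^{-1}=3$ (for instance $\lambda_1=\lambda_2=\tfrac12$, which is why these parameters are absent from the hypotheses) yields exactly $u=\log_2(6)+\ceil*{4+\log_2(3)}v$.

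I do not expect a genuine obstacle: the argument is essentially bookkeeping with floor functions and the monotonicity of $\eta_1$. The one point that needs care is to verify that the shift $r\mapsto\rho$ lands the open-ball theorem on precisely the sets $\cB{X_i}{2r}{G,SP}$ and $\cB{X_i}{r}{G,SP}$ — introducing no slack, and in particular allowing the endpoint $r=1$ that is excluded from \cref{theo::vdoka} — and to make sure that the mass threshold transfers in the correct direction when passing from radius $r$ to radius $\rho$, which it does because $\oB{X_i}{\rho}{G,SP}\supseteq\oB{X_i}{r}{G,SP}$.
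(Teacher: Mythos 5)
Your proposal is correct and takes essentially the same route as the paper, which gives only the one-line hint that for $r\geq 1$ there is a small $\delta(r)>0$ with $\cB{x}{2r}{SP}=\oB{x}{2r+2\delta}{SP}$ (and, implicitly, $\cB{x}{r}{SP}=\oB{x}{r+\delta}{SP}$), and then applies \cref{theo::vdoka} at radius $r+\delta$. Your explicit half-integer shift $\rho=\tfrac12(\floor*{2r}+1)$ is just a concrete instance of this $\delta$-perturbation, and your remarks on the transfer of the mass threshold (via $\oB{X_i}{\rho}{G,SP}=\cB{X_i}{r}{G,SP}\supseteq\oB{X_i}{r}{G,SP}$) and on the absence of $\lambda_1,\lambda_2$ from the corollary's hypotheses (so that $\log_2 3$ arises from a choice like $\lambda_1=\lambda_2=\tfrac12$, or as the bound when $\lambda_1,\lambda_2\leq\tfrac12$) correctly fill in what the paper leaves implicit.
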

This corollary is straightforward if we note that for any $r\geq 1$ exists $\delta(r)>0$ small enough such that $\cB{x}{2r}{SP}=\oB{x}{2r+2\delta}{SP}$ holds.

\nomenclature[Cz]{$\pr{2}$}{probability, lemma diff measure} 
\nomenclature[Cz]{$w$}{constant} 

Under the additional assumption that the graph is ``quasi-regular'' in the sense that the degrees are all of the same order up to a fixed constant, the volume doubling property holds also for the degree volume graph measure. This is stated in the following theorem.
	\begin{theorem}\label{vd-deg}
			Let $G=(\mathcal{V}, \mathcal{E})$ be an $\varepsilon$-graph defined from an i.i.d. sample of size $n$ from  the probability  measure $\mu$ on the submanifold $\manifold{M}$ of $\RZ^K$ such that \cref{ass::manifold,ass::graph,ass::isomap,ass::vd} are satisfied with parameters  $\pr{1}\in (0,1), \varepsilon>0, n\geq 4, n\in \NZ, v>0$.
Let $\pr{2}\in(0,0.5]$.
Let 
$\eta_2$ the degree volume graph measure.
Assume that,  with probability at least $1-\pr{3}$,  
we have $\frac{\max_{x\in \mathcal{V}}\grad_x}{ \min_{x\in \mathcal{V}} \grad_x }\leq c_{\bullet}$.
\nomenclature[Cz]{$c_{min}, c_{max}$}{upper and lower bound of vertex degrees }

Then 
with probability at least $1-\pr{1}-\pr{2}-\pr{3}$ there exists a constant $\tilde{u}=\tilde{u}(v)>0$ such that for all open balls $B=B_{G,SP}(X_i,r)$ with $X_i\in \mathcal{V}$ and  $r>1$
satisfying

$ \eta_1\left(\oB{X_i}{r}{G,SP}\right)\geq 8 n^{-1}\ln\left(\frac{3n^2}{\pr{2}}\right)$ 
the inequality
\begin{align}
\eta_2(\oB{X_i}{2r}{G,SP}) 
\leq 2^{\tilde{u}}\;\eta_2 (\oB{X_i}{r}{G,SP})
\end{align}  
 holds. The constant is $\tilde{u}=\log_2(6)+\ceil*{4+ \log_2(3)}v+2\log_2(c_{\bullet})$. 
\end{theorem}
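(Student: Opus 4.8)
The plan is to derive \cref{vd-deg} from \cref{theo::vdoka} by a direct comparison of the two graph measures $\eta_1$ and $\eta_2$ on the event where the graph is quasi-regular; no probabilistic estimate beyond those already in \cref{theo::vdoka} and in the assumed degree bound is needed.

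First I would record the elementary two-sided bound between $\eta_2$ and $\eta_1$. Let $\mathcal{R}$ be the event $\{\grad_{max}\le c_{\bullet}\,\grad_{min}\}$, which by hypothesis has probability at least $1-\pr{3}$. On $\mathcal{R}$, for every nonempty $W\subseteq\mathcal{V}$ we have $\grad_{min}\abs{W}\le\vol(W)\le\grad_{max}\abs{W}$ and $\grad_{min}\,n\le\vol(\mathcal{V})\le\grad_{max}\,n$, hence
\[
  c_{\bullet}^{-1}\,\eta_1(W)\ \le\ \eta_2(W)=\frac{\vol(W)}{\vol(\mathcal{V})}\ \le\ c_{\bullet}\,\eta_1(W),
\]
and this holds for all subsets of $\mathcal{V}$ simultaneously.

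Next, let $\mathcal{D}$ be the event of probability at least $1-\pr{1}-\pr{2}$ furnished by \cref{theo::vdoka}, on which $\eta_1(\oB{X_i}{2r}{G,SP})\le 2^{u}\,\eta_1(\oB{X_i}{r}{G,SP})$ for all $X_i\in\mathcal{V}$ and all $r>1$ with $\eta_1(\oB{X_i}{r}{G,SP})\ge 8n^{-1}\ln(3n^2/\pr{2})$, where $u$ is the constant there. The ``sufficient mass'' condition in \cref{vd-deg} is phrased through $\eta_1$ exactly as in \cref{theo::vdoka}, so \cref{theo::vdoka} applies verbatim to the same family of balls and no translation of the hypothesis is required. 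On $\mathcal{D}\cap\mathcal{R}$, for any such ball, chaining the comparison with the $\eta_1$-doubling gives
\begin{align*}
  \eta_2(\oB{X_i}{2r}{G,SP})
  &\le c_{\bullet}\,\eta_1(\oB{X_i}{2r}{G,SP})\\
  &\le c_{\bullet}\,2^{u}\,\eta_1(\oB{X_i}{r}{G,SP})\\
  &\le c_{\bullet}^{2}\,2^{u}\,\eta_2(\oB{X_i}{r}{G,SP})
  = 2^{\,u+2\log_2 c_{\bullet}}\,\eta_2(\oB{X_i}{r}{G,SP}).
\end{align*}
A union bound gives $\prob{\mathcal{D}\cap\mathcal{R}}\ge 1-\pr{1}-\pr{2}-\pr{3}$ (no independence among the exceptional events is used), and substituting the value of $u$ from \cref{theo::vdoka} yields $\tilde u=u+2\log_2(c_{\bullet})$, i.e.\ the stated constant $\log_2(6)+\ceil*{4+\log_2(3)}v+2\log_2(c_{\bullet})$.

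I do not expect a genuine obstacle here: all of the analytic and probabilistic content sits inside \cref{theo::vdoka}. The only points that require attention are bookkeeping ones — ensuring that the threshold hypothesis is literally the same $\eta_1$-condition in both statements (so that the doubling applies to the identical family of balls), and combining the three low-probability events (failure of the $\eta_1$-doubling, and the complement of $\mathcal{R}$) by a plain union bound rather than attempting to merge them.
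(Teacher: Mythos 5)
Your argument is correct and follows essentially the same route as the paper. The paper packages the comparison between $\eta_1$ and $\eta_2$ as a standalone deterministic lemma (Theorem~\ref{theo::empDeg}), asserting that a graph satisfying (rVD) for $\eta_1$ automatically satisfies (rVD) for $\eta_2$ with doubling exponent inflated by $2\log_2 c_\bullet$, and then invokes that lemma together with Theorem~\ref{theo::vdoka}; you instead chain the three inequalities inline on the intersection event $\mathcal{D}\cap\mathcal{R}$. Mathematically the two are the same, and your inline phrasing is in some ways cleaner, since Theorem~\ref{theo::vdoka} is a conditional statement (doubling only for balls of sufficient $\eta_1$-mass) rather than a literal (rVD), so the lemma does not apply to it ``verbatim''; you handle this correctly by restating the threshold condition explicitly.

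One small caveat on the constant: you write that substituting $u$ from Theorem~\ref{theo::vdoka} ``yields the stated constant $\log_2(6)+\lceil 4+\log_2(3)\rceil v+2\log_2(c_\bullet)$''. But the $u$ of Theorem~\ref{theo::vdoka} is $\log_2(6)+\lceil 4+\log_2\bigl((1+\lambda_2)(1-\lambda_1)^{-1}\bigr)\rceil v$, whereas the $\log_2(3)$ appears only in the paper's closed-ball corollary. Your derivation thus gives $\tilde u = u + 2\log_2 c_\bullet$ with the open-ball $u$; the stated constant in Theorem~\ref{vd-deg} appears to be a small inconsistency in the paper itself, not something your argument reproduces. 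It is worth flagging this explicitly rather than silently identifying the two expressions.
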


\begin{cor}\label{cor::vd-deg}
	Under the "standard asymptotics", the graph $G$ satisfies
	(rVD[$\tilde{u},r_{min}=1, r_{max}=\infty$]) (with $\tilde{u}$ from above) w.r.t.\ the degree volume graph measure with probability going to 1 (and even probability $p$ such that $1-p=o(n^{-2})$).
\end{cor}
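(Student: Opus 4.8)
The plan is to read the statement off \cref{vd-deg} by choosing the free confidence parameters there to decay polynomially in $n$ and then checking, for $n$ large enough under the standard asymptotics, that every hypothesis of \cref{vd-deg} is met. Concretely, fix $\pr{1}=\pr{2}=n^{-3}$ (so in particular $\pr{2}\in(0,0.5]$ for $n\geq 2$), and let $\pr{3}=\pr{3}(n)$ be the failure probability of the quasi-regularity event, to be controlled below. Recall first that the standard asymptotics already include \cref{ass::Ahlfors}, and that $k$-Ahlfors regularity of $\mu$ implies \cref{ass::vd}: for $r\leq\diam{\manifold{M}}$ the ratio $\mu(\oB{x}{2r}{\manifold{M}})/\mu(\oB{x}{r}{\manifold{M}})$ is at most $2^k c_u/c_l$, while for larger $r$ both balls equal $\manifold{M}$, so one may take $v=k+\log_2(c_u/c_l)$. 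Since \cref{ass::manifold,ass::graph} hold by hypothesis, and since by \cref{rem:A3} (applied with $\pr{1}=n^{-3}$, the argument there using only $z>0$) the sample-size condition \cref{A3n}, hence all of \cref{ass::isomap}, is satisfied once $n$ is large because $n\varepsilon(n)^k/\ln n\to\infty$, we conclude that \cref{ass::manifold,ass::graph,ass::isomap,ass::vd} all hold for $n$ large.

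Next I verify the ball-volume side condition of \cref{vd-deg}, namely $\eta_1(\oB{X_i}{r}{G,SP})\geq 8n^{-1}\ln(3n^2/\pr{2})$ for all $X_i\in\mathcal{V}$ and all $r>1$. For $r>1$ the open ball $\oB{X_i}{r}{G,SP}$ contains $X_i$ together with all of its graph neighbours, so $\eta_1(\oB{X_i}{r}{G,SP})\geq (1+\grad(X_i))/n$, and it is enough that $1+\grad(X_i)\geq 8\ln(3n^{5})$ uniformly in $i$. A vertex of the $\varepsilon$-graph is joined to every other sample point lying in $\oB{X_i}{\varepsilon}{E}\cap\manifold{M}$; since $\varepsilon<\cs$, the Euclidean–geodesic distance comparison (cf.\ \cref{ass::manifold}) shows this Euclidean ball is sandwiched between geodesic balls of radii $\asymp\varepsilon$, whence Ahlfors regularity gives $\mu(\oB{X_i}{\varepsilon}{E}\cap\manifold{M})\asymp\varepsilon^k$. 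A Chernoff bound for the Binomial$(n-1,\mu(\oB{X_i}{\varepsilon}{E}\cap\manifold{M}))$ variable $\grad(X_i)$, together with a union bound over the $n$ vertices, then shows that off an event of probability at most $\pr{3}:=2n\exp(-cn\varepsilon^k)$ (for a suitable $c>0$) one has $c'n\varepsilon^k\leq\grad(x)\leq c''n\varepsilon^k$ simultaneously for all $x\in\mathcal{V}$. On this event $1+\grad(X_i)\geq c'n\varepsilon^k$ dominates $8\ln(3n^5)=O(\ln n)$ once $n$ is large, again because $n\varepsilon^k/\ln n\to\infty$; and on the same event $\max_{x}\grad(x)/\min_{x}\grad(x)\leq c''/c'=:c_{\bullet}$, which is exactly the quasi-regularity hypothesis of \cref{vd-deg}.

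It remains to collect the estimates. For $n$ large enough all hypotheses of \cref{vd-deg} hold with $\pr{1}=\pr{2}=n^{-3}$ and $\pr{3}=2n\exp(-cn\varepsilon^k)$, so with probability at least $1-\pr{1}-\pr{2}-\pr{3}$ the inequality $\eta_2(\oB{X_i}{2r}{G,SP})\leq 2^{\tilde u}\,\eta_2(\oB{X_i}{r}{G,SP})$ holds for every $X_i\in\mathcal{V}$ and every $r>1$ (the side condition being automatic by the previous paragraph), i.e.\ $G$ satisfies (rVD[$\tilde u,r_{min}=1,r_{max}=\infty$]) w.r.t.\ $\eta_2$, with $\tilde u=\log_2(6)+\ceil*{4+\log_2(3)}v+2\log_2(c_{\bullet})$ a constant independent of $n$. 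Finally $1-p=\pr{1}+\pr{2}+\pr{3}=2n^{-3}+2n\exp(-cn\varepsilon^k)$, and $n\varepsilon^k/\ln n\to\infty$ forces $n\exp(-cn\varepsilon^k)=o(n^{-2})$ (as observed in the discussion of the standard asymptotics), so $1-p=o(n^{-2})\to 0$.

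The only substantive work beyond invoking \cref{vd-deg} is the uniform two-sided degree concentration producing the quasi-regularity constant $c_{\bullet}$ and the lower bound on $\grad(X_i)$; I expect this to be the main obstacle, since it requires pinning down $\mu(\oB{X_i}{\varepsilon}{E}\cap\manifold{M})$ via the Euclidean-versus-geodesic distance comparison and Ahlfors regularity, and the Chernoff-plus-union-bound step is precisely where the $n\exp(-cn\varepsilon^k)$ term enters and where the hypothesis $n\varepsilon^k\gg\ln n$ is used. Everything else is bookkeeping of the probability parameters.
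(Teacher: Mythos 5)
Your proposal is correct and follows essentially the same route as the paper: invoke \cref{vd-deg}, set $\pr{1}=\pr{2}=n^{-z}$ with $z>2$, check that the ball-volume side condition holds via a uniform two-sided degree concentration (which in the paper is just a citation of \cref{rem:deg}), take $c_\bullet$ from the resulting degree ratio bound, and note that $\pr{3}=O(n\exp(-cn\varepsilon^k))=o(n^{-2})$ under the standard asymptotics. The only difference is cosmetic: you re-derive the degree concentration from scratch (Chernoff plus union bound plus the Euclidean–geodesic ball sandwich) where the paper simply points to \cref{rem:deg} and \cref{theo::edegrees}, and you make explicit that Ahlfors regularity supplies \cref{ass::vd}, which the paper leaves implicit.
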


We prove \cref{theo::vdoka,cor::vd-emp,vd-deg,cor::vd-deg} in \cref{VD}. Now we present our results regarding the local Poincar\'{e} inequality for random $\varepsilon$-graphs.

\begin{theorem}[LPI, degree measure]\label{theo::LPIspPi2}%
	Let $G=(\mathcal{V}, \mathcal{E})$ be an $\varepsilon$-graph defined from an i.i.d. sample of size $n$ from  the probability  measure $\mu$ on the submanifold $\manifold{M}$ of $\RZ^K$ such that \cref{ass::manifold,ass::graph,ass::isomap,ass::Ahlfors,ass::secCurv} 
	are satisfied with parameters  $\lambda_1, \lambda_2, \pr{1}, \varepsilon>0, n\geq 2, 0<c_l\leq c_u<\infty, k>0, r_{\bullet}$.
	Consider the volume degree graph measure $\eta_2$.
	Let $L_{min}^*$ and $L_{max}^*$ with $0<L_{min}^*\leq L_{max}^*<\infty$ denote the Lipschitz constants independent of $n,\varepsilon,r$ induced by \cref{ass::secCurv}.
	Let  $\delta\in (0,1)$.
	Assume $r_{\bullet}(1-\lambda_1)\varepsilon^{-1}\geq 1$, 	$n\geq \frac{1}{(1-\delta)c_l} \left(\frac{4 \sqrt{k+3} L_{max}^*}{L_{min}^* \varepsilon}\right)^k +1$ 
	and $\frac{\sqrt{k+3}}{L_{min}\varepsilon}\geq 1$,  
	and define
	\[\pr{4}:= 2	\left(\frac{2 \sqrt{k+3}n (1-\lambda_1)}{L_{min}^*}\right)^k  \exp\left(-\frac{\delta^2 n c_l}{6} \frac{\varepsilon^k {L_{min}^*}^k}{4^k \sqrt{k+3}^k {L_{max}^*}^k}\right) +  2 \exp\left(-\frac{\delta^2 n c_l \varepsilon^k }{6(1-\lambda_1)^{k}}\right).\]
	
	Then  there exist  constants $\lambda>0$ and $\hat{C}>0$ such that, with probability at least 
	$1-n^2\pr{4}-\pr{1}$,
	for all balls $B=\overline{B}_{SP}(X_i,r)$ 
	with $r\in  \left[ 1,\min((1-\lambda_1)\varepsilon^{-1} r_{\bullet},n)  \right)$
	and $X_i\in \mathcal{V}$,   and for all functions $f: \mathcal{V}\rightarrow \RZ$ 
	the inequality
	\begin{align}
	\sum_{x\in \overline{B}_{SP}(X_i,r)} (f(x)-\overline{f}_{\overline{B}})^2 \grad(x) 
	&\leq \hat{C}  r^2 \sum_{\substack{x,y \in \overline{B}_{SP}(X_i,\lambda r)\\ x\sim y }}  (f(x)-f(y))^2 
	\end{align}
	holds.
	The constants are  $\lambda=4\frac{(1+\lambda_2)}{(1-\lambda_1) }+1$ and 
			\begin{align}
	\hat{C}:=& 
	\frac{1}{(1-\lambda_1)^{2+2k}}   \frac{(1+\delta)^2c_u^2}{(1-\delta)^2 c_l^2  } 	\left(1+w\right)^2 k^2 \left(2\frac{\sqrt{k+3}}{L_{min}^*}\right)^{k+2}\\
	=&const\left( \lambda_1,  c_l,c_u, w, k, \delta, L_{min}^*\right) 
	\end{align}
	with $w:=\frac{2(1+\delta)}{1-\delta}\frac{c_u}{c_l}\frac{{L_{max}^*}^k}{{L_{min}^*}^k}4^k \sqrt{k+3}^k.$ 	
\end{theorem}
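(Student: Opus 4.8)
The plan is to combine the distance approximation of \cref{sec:prelim} with comparison geometry and a two-scale Poincar\'e argument, controlling separately the oscillation of $f$ inside a single cell of diameter of order $\varepsilon$ (a clique estimate) and the oscillation of the cell averages across cells (a lattice Poincar\'e estimate). \emph{Step 0 (reduction to geodesic balls and a cell partition).} First I would invoke the distance approximation lemma of \cref{sec:prelim}: off an event of probability at most $\pr{1}$ one has, uniformly over all sample pairs,
\[
(1-\lambda_1)\,\varepsilon\,\dist{G,SP}{x,y}\ \leq\ \dist{\manifold{M}}{x,y}\ \leq\ (1+\lambda_2)\,\varepsilon\,\dist{G,SP}{x,y},
\]
so every graph ball $\cB{X_i}{r}{G,SP}$ is sandwiched between the geodesic balls of radii $(1-\lambda_1)\varepsilon r$ and $(1+\lambda_2)\varepsilon r$ around $X_i$. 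The hypothesis $r<(1-\lambda_1)^{-1}\varepsilon^{-1}r_{\bullet}$ then forces all minimising geodesics between points of $\cB{X_i}{r}{G,SP}$, together with a thin tube around them, to lie in a geodesic ball of radius $<r_{\bullet}$; there \cref{ass::secCurv} (via Rauch comparison) makes $\exp_{X_i}$ a bi-Lipschitz diffeomorphism with the constants $L_{min}^*,L_{max}^*$. Pulling this ball back to $T_{X_i}\manifold{M}\cong\RZ^k$, placing an axis-parallel cubical grid of mesh $h:=L_{min}^*\varepsilon/(4\sqrt{k+3}\,L_{max}^*)$ and pushing forward produces a partition into cells $C$ with two properties, both guaranteed by the smallness of $h$ compared with $\varepsilon$: (i) any two sample points in the same cell are within Euclidean distance $\varepsilon$, hence adjacent in $G$, so each cell induces a clique; (ii) sample points in two adjacent cells are within distance $\varepsilon$, so adjacent cells are completely joined in $G$. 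The number of cells meeting $\cB{X_i}{r}{G,SP}$ is at most a constant depending on $k,\lambda_1,\lambda_2,L^*$ times $r^k$; and the elementary fact that a minimising geodesic between two points of $\cB{X_i}{r}{G,SP}$ stays within geodesic distance $3(1+\lambda_2)\varepsilon r$ of $X_i$, the cells it meets within a further $\varepsilon/4$, and that division by $(1-\lambda_1)\varepsilon$ returns a graph radius at most $\big(\frac{4(1+\lambda_2)}{1-\lambda_1}+1\big)r=\lambda r$ for $r\geq1$, is precisely where the ``$4$'' and the ``$+1$'' in $\lambda$ come from and guarantees that every edge used below has both endpoints in $\cB{X_i}{\lambda r}{G,SP}$.

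\emph{Step 1 (the good event).} Next I would dispose of $n^2\pr{4}$ by a union bound: over the at most $n$ admissible centres $X_i$, then over the (at most a $k$-constant times $r^k\leq n^k$) cells attached to each such ball, and over the $n$ degrees. A Bernstein/Chernoff estimate for the relevant binomial counts shows that, off an event of probability at most $n^2\pr{4}$: every cell contains at least $(1-\delta)$ times its expected number of sample points, which is $\geq1$ exactly by the assumed lower bound on $n$; and every vertex satisfies $(1-\delta)\,\mathbf{E}\leq\grad(x)\leq(1+\delta)\,\mathbf{E}$ with $\mathbf{E}$ between $c_l n\varepsilon^k$ and $c_u n\varepsilon^k$ up to a dimensional constant. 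The two summands of $\pr{4}$ are exactly the cell-occupancy bound at scale $h$ and the degree bound at scale $\varepsilon/(1-\lambda_1)$. On this event the subgraph attached to each ball is quasi-regular, $\grad_{max}/\grad_{min}$ is bounded by a constant multiple of $c_u/c_l$, and $\vol(C)$ is of order $(n\varepsilon^k)^2$ uniformly over cells.

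\emph{Step 2 (the two-scale Poincar\'e inequality).} Fix $\overline B:=\cB{X_i}{r}{G,SP}$, let $C_1,\dots,C_N$ be the cells partitioning it, and set $F(C):=\overline f_C=\vol(C)^{-1}\sum_{x\in C}f(x)\grad(x)$. Since the cells partition $\overline B$, the $\vol(C)$-weighted average of the $F(C_j)$ equals $\overline{f}_{\overline{B}}$, and since $\overline{f}_{\overline{B}}$ minimises $c\mapsto\sum_{x\in\overline B}(f(x)-c)^2\grad(x)$, writing $f(x)-\overline{f}_{\overline{B}}=(f(x)-F(C_x))+(F(C_x)-\overline{f}_{\overline{B}})$ and using $(a+b)^2\leq2a^2+2b^2$ gives
\[
\sum_{x\in\overline B}(f(x)-\overline{f}_{\overline{B}})^2\grad(x)\ \leq\ 2\sum_{j=1}^{N}\sum_{x\in C_j}(f(x)-\overline f_{C_j})^2\grad(x)\ +\ 2\sum_{j=1}^{N}(F(C_j)-\overline{f}_{\overline{B}})^2\vol(C_j).
\]
For the first (intra-cell) sum, on the clique $C_j$ I would use $\sum_{x\in C_j}(f(x)-\overline f_{C_j})^2\grad(x)=\frac{1}{2\vol(C_j)}\sum_{x,y\in C_j}(f(x)-f(y))^2\grad(x)\grad(y)\leq\frac{\grad_{max}^2}{\vol(C_j)}\sum_{x\sim y,\ x,y\in C_j}(f(x)-f(y))^2$, the prefactor being bounded by a constant depending only on $k,c_l,c_u,\delta,L^*$ by Step 1. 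For the second (inter-cell) sum I would establish a Poincar\'e inequality for $F$ on the cell grid, which up to the bi-Lipschitz chart is a lattice ball of radius of order $r$, giving $\sum_{j}(F(C_j)-\overline{f}_{\overline{B}})^2\vol(C_j)$ bounded by a $k$-constant times $r^2\sum_{C\sim C'}(F(C)-F(C'))^2\,\overline w$, where $\overline w$ is the common order of $\vol(C)$; then, using again that adjacent cells are completely joined, $(F(C)-F(C'))^2\leq\frac{1}{\vol(C)\vol(C')}\sum_{x\in C,\ y\in C'}(f(x)-f(y))^2\grad(x)\grad(y)\leq\frac{\grad_{max}^2}{\vol(C)\vol(C')}\sum_{x\sim y,\ x\in C,\ y\in C'}(f(x)-f(y))^2$, with $\overline w\,\grad_{max}^2/(\vol(C)\vol(C'))$ again bounded by a dimensional constant. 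Since each edge of $G$ lies in at most $3^k$ pairs $\{C,C'\}$ with $C=C'$ or $C\sim C'$ and, by Step 0, sits in $\cB{X_i}{\lambda r}{G,SP}$, adding the two contributions and propagating the constants gives
\[
\sum_{x\in\cB{X_i}{r}{G,SP}}(f(x)-\overline{f}_{\overline{B}})^2\grad(x)\ \leq\ \hat C\,r^2\sum_{\substack{x,y\in\cB{X_i}{\lambda r}{G,SP}\\ x\sim y}}(f(x)-f(y))^2,
\]
where $\hat C$ is the product of the (squared) quasi-regularity constant, of order $c_u/c_l$ up to $\delta$-factors, a power of order $k$ of the metric-distortion ratio $\sqrt{k+3}\,L_{max}^*/L_{min}^*$, and purely combinatorial $k$-factors; tracking the numerology of $h$, $\vol(C)$ and $\grad_{max}$ through these inequalities recovers the stated values of $\hat C$ and $\lambda$.

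\emph{The main difficulty.} The delicate point is the inter-cell Poincar\'e inequality for $F$ with the sharp $r^2$ factor and a constant free of $n$ and $\varepsilon$. Carried out by canonical paths, each cell-pair must be routed along the discretised \emph{geodesic} joining its two cells --- this keeps the path inside a geodesic ball of radius $\leq2(1+\lambda_2)\varepsilon r<r_{\bullet}$, and is what ultimately pins down the form of $\lambda$ --- and one then has to bound the congestion of any single cell-edge by a $k$-constant times $r^{k+1}\,\overline w^{2}$ (equivalently, by $r$ times $\sum_C\vol(C)$ after normalisation). This is an integral-geometry estimate on $\manifold{M}$: parametrising pairs by (initial point, initial direction, length), $k$-Ahlfors regularity turns the constraint ``the geodesic meets this cell'' into a solid-angle bound of order $(h/\mathrm{dist})^{k-1}$ and a radial integral $\int\rho^{-(k-1)}\dif{\mu}$ of order the diameter, and it is exactly here that the curvature bound (through $r_{\bullet}$ and $L_{min}^*,L_{max}^*$) and Ahlfors regularity (through $c_l,c_u$) enter essentially. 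An alternative that sidesteps the congestion book-keeping is to import the continuous local Poincar\'e inequality on $\manifold{M}$ --- valid on geodesic balls of radius $\leq r_{\bullet}$ by a Buser-type comparison under \cref{ass::secCurv} --- and to discretise it against the cell partition, trading the integral geometry for mollification estimates of comparable difficulty.
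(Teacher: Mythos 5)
Your proposal takes a genuinely different route from the paper's. The paper does not use a two-scale (intra-cell / inter-cell) decomposition; it proves a general result (\cref{theo:LPIsp}) for an arbitrary measure $\eta$ by applying the Diaconis--Stroock canonical-path bound of \cref{genStruc} directly to the full subgraph on $B_2 = \cB{X_i}{\varepsilon(1-\lambda_1)^{-1}r}{\mathcal{V},\distb{\manifold{M}}}$, choosing as $\tilde\Gamma$ the random Hamming paths of \cite{Ulrike2014}. These are axis-parallel walks (change one coordinate at a time) on a regular grid of mesh $g=1/\lceil\sqrt{k+3}/(L_{min}\varepsilon)\rceil$ in $[0,1]^k$, the image of $B_2$ under the bi-Lipschitz chart of \cref{existbilip}, with one random sample point selected per interior cell. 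The whole point of the Hamming construction is that the maximal path length and maximal average load are bounded combinatorially in \cref{cube} ($l_{max}\lesssim k/g$, $b_{max}\lesssim (N_{max}/N_{min})^2\, k/g^{k+1}$), with the occupancy ratio $N_{max}/N_{min}$ controlled by Ahlfors regularity via \cref{cor:counting}. The constant $\lambda$ comes purely from the ball inclusion $B_1\subseteq B_2\subseteq B_3=\cB{X_i}{\lambda r}{G,SP}$ of \cref{cor::dist} together with the monotonicity lemma \cref{lem::imp-ineqV1}, not from any tube around a geodesic. The specialisation to $\eta_2$ happens only at the very end, using the degree bounds of \cref{theo::edegrees} and \cref{rem:deg}.

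The gap in your proposal is precisely the inter-cell congestion estimate you yourself flag as ``the main difficulty.'' You route cell pairs along discretised geodesics and assert that the congestion of any single cell-edge is controlled by a solid-angle argument plus a radial integral $\int\rho^{-(k-1)}\,d\mu$. Even granting Ahlfors regularity, this is delicate: the radial integral is borderline at $\rho\to 0$ and needs the cell-size cutoff injected carefully, and the solid-angle picture presupposes that geodesics spread out at the Euclidean rate, which under only a two-sided sectional-curvature bound requires a quantitative Jacobi-field comparison that you do not state. None of this machinery is needed: Hamming paths in the pulled-back cube already give a congestion bound that is purely combinatorial, and they never leave the chart image of $B_2$, so the concern that motivated your geodesic routing --- keeping the paths inside $\cB{X_i}{\lambda r}{G,SP}$ --- is resolved for free by the ball inclusions. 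If you want to keep the two-scale structure (which is a perfectly reasonable and coherent alternative), the cleaner move is to run the lattice Poincar\'e on the coarse grid using axis-parallel canonical paths, which is morally what the paper's Hamming construction does, applied in one shot to the full subgraph rather than to the coarse-grained cell averages. Your intra-cell clique estimate and the $(a+b)^2\leq 2a^2+2b^2$ splitting are otherwise correct, and your derivation of the good event is consistent with the two summands of $\pr{4}$.
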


\begin{cor}\label{cor::lpi-deg}
	Under the "standard asymptotics" the graph $G$ satisfies  LPI($\lambda, C_{\lambda}, r_+$) with $\lambda, C_{\lambda}=\hat{C}$ as in \cref{theo::LPIspPi2} and $r_+=\min(r_{\bullet}(1-\lambda_1)\varepsilon^{-1} ,n)$  with probability going to 1. 
\end{cor}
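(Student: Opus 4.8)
The plan is to deduce the corollary from \cref{theo::LPIspPi2} by an asymptotic bookkeeping argument: fix the free parameters, check that the deterministic hypotheses of that theorem hold for $n$ large under the standard asymptotics, check that the exceptional probability $n^2\pr{4}+\pr{1}$ there tends to $0$, and note that the constants $\lambda$ and $\hat C$ it produces are already independent of $n,\varepsilon,r$.

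Concretely, I would fix $\lambda_1,\lambda_2\in(0,1)$ and $\delta\in(0,1)$, take $\varepsilon=\varepsilon(n)$ as in \cref{asym}, and choose $\pr{1}=\pr{1}(n):=n^{-z}$ with $z>0$ so that Remark~\ref{rem:A3} applies. With these choices $\lambda=4(1+\lambda_2)/(1-\lambda_1)+1$, the Lipschitz constants $L_{min}^*,L_{max}^*$, the quantity $w$, and hence $\hat C$, are fixed constants depending only on $\manifold{M}$, $\mu$ and the chosen parameters, not on $n$. I would then verify the side conditions of \cref{theo::LPIspPi2} for $n$ large: since $\varepsilon(n)\rightarrow 0$ the conditions $r_{\bullet}(1-\lambda_1)\varepsilon^{-1}\geq 1$ and $\sqrt{k+3}/(L_{min}\varepsilon)\geq 1$ hold eventually, while $n\geq (1-\delta)^{-1}c_l^{-1}\bigl(4\sqrt{k+3}\,L_{max}^*/(L_{min}^*\varepsilon)\bigr)^{k}+1$ is of the form $n\geq \mathrm{const}\cdot\varepsilon^{-k}+1$ and so holds eventually because \cref{asym} forces $n\varepsilon^{k}\rightarrow\infty$; moreover \cref{ass::manifold,ass::graph,ass::Ahlfors,ass::secCurv} are standing hypotheses, and \cref{ass::isomap} holds for $n$ large with probability tending to $1$ by Remark~\ref{rem:A3} (using $\varepsilon\rightarrow 0$ for \cref{A3eps} and the standard asymptotics for \cref{A3n}).

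Next I would control the probability. One has $\pr{1}=n^{-z}\rightarrow 0$, and each of the two summands of $\pr{4}$ has the form $(\text{polynomial in }n)\exp(-c\,n\varepsilon^{k})$ with a fixed $c>0$ (polynomial degree $k$ in the first summand, degree $0$ in the second); hence $n^2\pr{4}$ is a finite sum of terms $n^{\gamma}\exp(-c\,n\varepsilon^{k})$ with fixed $\gamma,c>0$, each tending to $0$ by the observation recorded right after \cref{asym}. Therefore $n^2\pr{4}+\pr{1}\rightarrow 0$ and the probability $1-n^2\pr{4}-\pr{1}$ of \cref{theo::LPIspPi2} tends to $1$. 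On that event \cref{theo::LPIspPi2} yields the inequality of \cref{defi-lpi-Barlow} for every closed ball of radius $r\in[1,r_+)$ with $r_+:=\min(r_{\bullet}(1-\lambda_1)\varepsilon^{-1},n)$; for $0\leq r<1$ the ball $\cB{X_i}{r}{G,SP}$ is the singleton $\{X_i\}$, so both sides vanish, and the endpoint $r=r_+$ is covered because the closed $\distb{G,SP}$-balls are integer-determined (hence unchanged under a small decrease of the radius) while the right-hand side is nondecreasing in $r$. This gives LPI$(\lambda,\hat C,r_+)$ on an event of probability tending to $1$, as claimed.

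I do not expect a genuine obstacle: the argument is the asymptotic bookkeeping already sketched after \cref{asym} and in Remark~\ref{rem:A3}. The only points needing a little care are making all the ``for $n$ large'' clauses hold simultaneously, and checking that the chosen decay rate $\pr{1}=n^{-z}$ is compatible with \cref{ass::isomap} through Remark~\ref{rem:A3} — which holds for any fixed $z>0$ under the standard asymptotics.
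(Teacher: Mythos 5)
Your proof follows the same route as the paper's: set $\pr{1}=n^{-z}$, check that the deterministic side conditions of Theorem~\ref{theo::LPIspPi2} hold eventually under the standard asymptotics (using $\varepsilon\to 0$ and $n\varepsilon^k\to\infty$, plus Remark~\ref{rem:A3} for Assumption~\ref{ass::isomap}), and observe that $n^2\pr{4}+\pr{1}\to 0$. The paper's own proof is terser and silently glosses over the $r<1$ and $r=r_+$ endpoint cases that you address explicitly, but the core bookkeeping argument is identical.
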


\begin{proof}[of \cref{cor::lpi-deg}]
	We set $\pr{1}=n^{-z}$ for some $z>0$.
	In our standard asymptotic regime, the probability $n^2\pr{4}$ converges to 0 and therefore $1-n^2\pr{4}-\pr{1}$ tends to 1 for $n$ going to infinity.
	Furthermore the conditions
	$r_{\bullet}(1-\lambda_1)\varepsilon^{-1}\geq 1$	
	and $\frac{\sqrt{k+3}}{L_{min}\varepsilon}\geq 1$ are satisfied, if $n$ is large enough since $\varepsilon(n)\rightarrow 0$.
	The inequality $n\geq \frac{1}{(1-\delta)c_l} \left(\frac{4 \sqrt{k+3} L_{max}^*}{L_{min}^* \varepsilon}\right)^k +1$ also holds in our asymptotic regime since $n\varepsilon^k$ will be larger than any constant for $n$ large enough.
\end{proof}

\Cref{Poin} is dedicated to the proof of \cref{theo::LPIspPi2}. We actually establish a slightly more general result in \cref{Poin} and can therefore state a local Poincar\'e inequality w.r.t\ the empirical graph measure in the following form. 

\begin{cor}\label{cor::lpi-emp}
		Under the "standard asymptotics" the graph $G$ satisfies a local Poincar\'e inequality w.r.t.\ the empirical measure
		of the form 
			\begin{align*}
		\sum_{x\in \overline{B}_{SP}(X_i,r)} (f(x)-\overline{f}_{\overline{B}})^2 
		&\leq \hat{C}  r^2 \sum_{\substack{x,y \in \overline{B}_{SP}(X_i,\lambda r),\\ x\sim y }} (f(x)-f(y))^2 
		\end{align*}
	 	(where $\overline{f}_{\overline{B}}=n_{\overline{B}}^{-1} \sum_{y\in \overline{B}} f(y)$) with probability going to 1.
		The constant is \begin{align*}\hat{C} 
		=&const\left( \lambda_1, c_l, L_{min}^*, k, w, \delta \right) \cdot \frac{1}{n\varepsilon^k}\\
		=&\frac{1}{\varepsilon^k n}\frac{1 }{(1-\lambda_1)^2}   \frac{(\eta^+)^2}{(1-\delta)c_l \eta^-} 	\left(1+w\right)^2 k^2 \left(2\frac{\sqrt{k+3}}{L_{min}^*}\right)^{k+2}
		\end{align*}
		with $w:=\frac{1+\delta}{1-\delta}\frac{c_u}{c_l}\frac{{L_{max}^*}^k}{{L_{min}^*}^k}4^k \sqrt{k+3}^k$.
\end{cor}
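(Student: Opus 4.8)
The plan is to derive \cref{cor::lpi-emp} from \cref{theo::LPIspPi2} (equivalently, from the slightly more general weighted Poincar\'e inequality established in \cref{Poin}) together with the fact that, under the standard asymptotics, the degree sequence of $G$ is uniformly regular, so that the empirical graph measure and the degree volume measure differ only by a factor of order $n\varepsilon^{k}$. The right-hand sides of the degree-weighted and empirical inequalities are already \emph{identical} in form (both equal $\hat C\,r^{2}\sum_{x\sim y}(f(x)-f(y))^{2}$ over the edges contained in $\overline{B}_{SP}(X_i,\lambda r)$, with no degree weight), so only the left-hand sides — a degree-weighted variance versus an unweighted variance — need to be compared.

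\emph{Uniform degree regularity.} First I would establish that, under the standard asymptotics (with $\pr{1}=n^{-z}$), there is an event of probability tending to $1$ on which $\eta^{-}n\varepsilon^{k}\le\grad(x)\le\eta^{+}n\varepsilon^{k}$ for \emph{every} $x\in\mathcal V$ simultaneously, with $0<\eta^{-}\le\eta^{+}<\infty$ the constants appearing in the statement, depending only on $c_{l},c_{u},\delta$. Conditionally on $x\in\mathcal V$, $\grad(x)$ is a sum of $n-1$ i.i.d.\ Bernoulli indicators with mean $\mu(\oB{x}{\varepsilon}{E})$; by the distance comparison of \cref{sec:prelim} (which uses \cref{ass::isomap}) the Euclidean ball $\oB{x}{\varepsilon}{E}$ is sandwiched between geodesic balls of radii comparable to $\varepsilon$, so \cref{ass::Ahlfors} gives $\mu(\oB{x}{\varepsilon}{E})\asymp\varepsilon^{k}$ with constants depending only on $c_{l},c_{u}$. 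A Chernoff bound and a union bound over the $n$ vertices then give the claim, the failure probability $n\exp(-c\,n\varepsilon^{k})$ being $o(1)$ precisely because $n\varepsilon^{k}/\ln n\to\infty$. In particular the quasi-regularity hypothesis used in \cref{vd-deg} and \cref{theo::LPIspPi2} holds on this event with $c_{\bullet}=\eta^{+}/\eta^{-}$.

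\emph{From the degree LPI to the empirical LPI.} Using that a weighted mean minimizes the corresponding weighted sum of squared deviations, write $\overline{f}^{\,\mathrm{unw}}$ and $\overline{f}^{\,\deg}$ for the unweighted and degree-weighted means over $\overline{B}=\overline{B}_{SP}(X_i,r)$; then on the event of the previous step
\[\sum_{x\in\overline{B}}\bigl(f(x)-\overline{f}^{\,\mathrm{unw}}\bigr)^{2}\ \le\ \sum_{x\in\overline{B}}\bigl(f(x)-\overline{f}^{\,\deg}\bigr)^{2}\ \le\ \frac{1}{\eta^{-}n\varepsilon^{k}}\sum_{x\in\overline{B}}\bigl(f(x)-\overline{f}^{\,\deg}\bigr)^{2}\grad(x),\]
and \cref{theo::LPIspPi2} bounds the last sum by $\hat C^{\deg}\,r^{2}\sum_{x\sim y}(f(x)-f(y))^{2}$. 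This already yields a local Poincar\'e inequality w.r.t.\ the empirical measure of the announced form, with the same $\lambda$ and the same radius range $r\in[1,r_{+})$, $r_{+}=\min(r_{\bullet}(1-\lambda_{1})\varepsilon^{-1},n)$, and with a constant of order $(n\varepsilon^{k})^{-1}$. To obtain the precise constant stated in \cref{cor::lpi-emp} — in particular the exponent $(1-\lambda_{1})^{-2}$ in place of the $(1-\lambda_{1})^{-(2+2k)}$ of \cref{theo::LPIspPi2}, and the $\eta^{\pm}$ in place of $c_{u},c_{l}$ — it is cleaner to rerun the proof of \cref{Poin} with the constant weight $\phi\equiv1$ in place of $\phi=\grad$ (this is the "slightly more general result" referred to there): the intermediate steps that compared degree volumes of shortest-path balls to $\varepsilon^{k}$ times their cardinality (and that contributed the surplus $(1-\lambda_{1})$ and volume-ratio factors) are not needed, and the degree-regularity of the previous step is invoked only where the argument passes between the graph and the underlying manifold. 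Collecting terms gives $\hat C=(\varepsilon^{k}n)^{-1}\,\mathrm{const}(\lambda_{1},c_{l},L_{min}^{*},k,w,\delta)$ with $w$ as in the corollary, and intersecting with the probability-tending-to-$1$ event of \cref{theo::LPIspPi2} keeps the total probability tending to $1$.

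\emph{Main obstacle.} The delicate point is the uniform degree regularity with the correct constants: one must transport the two-sided Ahlfors estimate from geodesic to Euclidean $\varepsilon$-balls (via \cref{ass::isomap} and the distance-approximation results of \cref{sec:prelim}), then control a union bound over all $n$ vertices, which is exactly where the standard-asymptotics condition $n\varepsilon^{k}/\ln n\to\infty$ enters. Everything downstream is the convexity inequality for means and substitution into the already-established (weighted) Poincar\'e inequality.
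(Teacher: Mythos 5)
Your proposal reaches the right conclusion, but it takes a noticeably more circuitous route than the paper. The paper's proof of \cref{cor::lpi-emp} is a direct application of \cref{theo:LPIsp}: the empirical measure $\eta_1$ trivially satisfies \cref{ass::poseta} with the deterministic values $\eta^+=\eta^-=1/n$, \cref{ass::lip} holds by \cref{existbilip}, and one simply multiplies both sides of \eqref{lpi} by $n$ to turn $\sum (f(x)-\overline{f}_{\overline{B}})^2\eta_1(x)$ into the unweighted sum, so that $\hat C = n\,C_*$. No degree regularity is needed anywhere, because the weight on the left-hand side of \cref{theo:LPIsp} is $\eta_1$, not $\grad$; the only random inputs are the high-probability events already built into \cref{theo:LPIsp} ($\pr{1}$, $n^2\pr{4}$), which vanish under the standard asymptotics.

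Your primary argument — deduce the empirical LPI from \cref{theo::LPIspPi2} via the convexity of the mean and a uniform two-sided degree bound $\eta^-n\varepsilon^k\le\grad(x)\le\eta^+n\varepsilon^k$ — is sound and does give a local Poincar\'e inequality of the right order $(n\varepsilon^k)^{-1}$, but (as you yourself note) it produces a constant carrying the extra $(1-\lambda_1)^{-2k}$ and $c_u^2/c_l^2$ factors inherited from $\hat C$ in \cref{theo::LPIspPi2}, so it does not reproduce the constant claimed in the corollary. The trade-off is clear: your route is self-contained modulo a degree concentration lemma, whereas the paper's route relies on having formulated \cref{theo:LPIsp} for a general vertex measure $\eta$ in the first place. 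What you describe as "rerun the proof with $\phi\equiv 1$" is precisely this: \cref{theo:LPIsp} \emph{is} the rerun, already stated and proved, and the corollary is just the specialization $\eta=\eta_1$. Your remark that degree regularity is still "invoked where the argument passes between the graph and the underlying manifold" overstates the dependence: the proof of the general \cref{theo:LPIsp} via \cref{cor::kappabound1} counts sample points per grid cell (through \cref{ass::Ahlfors} and the bi-Lipschitz map), not vertex degrees, so for $\eta_1$ no degree estimate enters at all. If you want to obtain the exact constant, take the paper's direct route; if you accept a slightly worse constant, your primary route works as stated.
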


\subsection{Application: Heat kernel bound}
\label{sec:heatkernel}

We now return to a motivation for establishing (VD) and (LPI) mentioned in the introduction, namely obtaining
estimates for the heat kernel on the geometrical graph. We first summarize the results obtained in
\cite{Barlow2016} for a fixed graph. For coherence we will keep the notation used up until now.
Let $\mathbf{D}$ denote the diagonal matrix with $\mathbf{D}_{ii} := \grad{x_i}$,
and $\mathbf{L} := \mathbf{I}-\mathbf{D}^{-1}\mathbf{A}$ the so-called normalized random walk Laplacian.
Consider the continuous time random walk $X_t$ with generator $\mathbf{L}$, and
let $\mathbf{P}_t(x_i,x_j):=\mathbb{P}_{x_i}(X_t=x_j) = [\exp (-t\mathbf{L})]_{ij}$ denote probability point
function for $X_t$ starting from point $x_i$, then $\mathbf{Q}_t := \mathbf{P}_t \mathbf{D}^{-1}$ is called
heat kernel on the graph.

Theorem~1.2 of \cite{Barlow2016} implies the following.
Suppose the following assumptions are satisfied with respect to the shortest path distance and the degree volume graph measure $\eta_2$, for $1\leq r_{\min} \leq r_{\max}$:
\begin{enumerate}
\item the restricted Volume Doubling condition (with constant $v$), for all balls of radius in the range $[r_{\min} , r_{\max} ]$ ;
\item the weak local Poincaré inequality (with constants $c_\lambda,\lambda$), for all balls of radius in the range $[r_{\min} , r_{\max} ]$ ;
\item 
  the inequality $\vol(B(x,r)) \leq C_0 \vol({x}) r^v$, for all balls of radius in the range
  $[r_{\min} , r_{\max} ]$.
  
\end{enumerate}
Then the following estimate holds:
\begin{equation}
\label{eq:heatkernelbound}
\mathbf{Q}_t(x,y) \leq \frac{c_1}{\vol\left(\cB{x}{\sqrt{t}}{G,SP}\right)}
\exp\left( - c_2\frac{\dist{G,SP}{x,y}^2}{t}\right), 
\end{equation}
for all $t \in [r_{\min}^{1.1},r_{\max}]$ with $t \geq \dist{G,SP}{x,y}$.
Above $c_2$ is a {\em universal} constant and $c_1$ depends on $(v,c_\lambda,\lambda,C_0)$.

Given Corollaries~\ref{cor::vd-deg} and~\ref{cor::lpi-deg},
under the standard asymptotics we have that the first and the second
above conditions are satisfied with probability going to 1 as $n\rightarrow \infty$,
with constants $(v,c_\lambda,\lambda)$ {\em not} depending on $n$, $r_{\min}=1$
and $r_{\max}$ of the order $O(\varepsilon(n)^{-1})$ (which is the order of magnitude of the
graph diameter).

Finally, concerning the third assumption above, under the standard asymptotics, Theorem~\ref{theo::edegrees} point (iv) guarantees that with probability going to 1 as $n\rightarrow \infty$,
all degrees in the graph are uniformly upper and lower bounded up to constant factor by $n\varepsilon^k$; while inequality~\eqref{ballinclusion} together with Corollary~\eqref{lem::ubOka} ensure that the cardinality of ball $\cB{x}{r}{G,SP}$
is uniformly (in $x,r$) upper bounded up to constant factor by $nr^k\varepsilon^k$.
Therefore, the third assumption is satisfied with $C_0$ of order $n\varepsilon^k$.

We can thus apply the result of~\cite{Barlow2016}; however the interest of the obtained bound~\eqref{eq:heatkernelbound}
on the heat kernel rests on the dependence of the factor $c_1$ in the constant $C_0$
from the third assumption. The paper~\cite{Barlow2016} is not specific concerning this point,
and it is not obvious to track the dependence on the constants throughout all arguments there, but
we surmise that the dependence is at most a (small) power. In this case, the obtained
heat kernel bound is informative (because of the dominating exponential factor) as soon as $\distb{G,SP}(x,y) \geq C \sqrt{t} \log n$ for a sufficiently large constant $C$ (to be compared with the
graph diameter, of order $\varepsilon^{-1}$ which will typically be a power of $n$).

An important potential application of such heat kernel bounds is to establish spatial localization
properties of kernels based on spectral localization of the Laplacian.
Denoting ${\mathbf{L}}' := \mathbf{D}^{\frac{1}{2}} \mathbf{L} \mathbf{D}^{-\frac{1}{2}} $ the symmetrically normalized
graph Laplacian, $(\lambda_j,f_j)_{j\geq 1}$ its eigendecomposition, and an appropriate band-pass compactly supported function $\zeta:\RZ_+ \rightarrow \RZ_+$, it was proposed in \cite{Goebel2018} to construct a ``graph wavelet'' frame based on the spectrally localized kernels
\[
  \mathbf{K}_\ell := \sqrt{\zeta}_\ell(\mathbf{L}') = \sum_{j\geq 1} \sqrt{\zeta_\ell(\lambda_j)} f_jf_j^T, \qquad \zeta_\ell(x) = \zeta( 2^{-\ell}x), \ell \geq 1,
\]
(see also \cite{Hammond2011} for related work). An important desirable property of this construction
is the spatial localization of $\mathbf{K}_\ell(x,y)$ as a rapidly decaying function of $\varepsilon \distb{G,SP}(x,y) / 2^\ell$. Establishing such a theoretical property has been realized in a very general framework of Dirichlet
spaces by~\cite{Coulhon2012}, crucially using as an assumption a sub-Gaussian estimate for the
kernel of $[\exp (-t{\mathbf{L}'})]$ (when interpreted in the present setting). Note that under the standard asymptotics considered
in this paper, since $\mathbf{D}$ is upper and lower bounded by a multiple of identity up to constant
factor, the estimates for $\exp (-t{\mathbf{L}})$ are sufficient.

It is convenient and natural to use the rescaling $\tilde{\rho}:=\varepsilon{\distb{G,SP}}$ (which is
equivalent to $\distb{\manifold M}$, see next section), $\tilde{\mathbf{L}}:= \varepsilon^{-2}\mathbf{L}$, and $\tilde{t}=\varepsilon^2 t$. In this light, the above estimates
translate to estimates for $\exp (-\tilde{t}\tilde{{\mathbf{L}}})$, the ``large time'' condition in~\eqref{eq:heatkernelbound}
becomes $\tilde{t} \geq \varepsilon \tilde{\rho}(x,y)$. Although the theory developed in~\cite{Coulhon2012} requires in principle sub-Gaussian estimates for all $t\leq 1$, it seems plausible that the
obtained kernel localization estimates there still hold for ``larger'' scales $\ell \lesssim -\log \varepsilon$
when the sub-Gaussian estimates are restricted to $\tilde{t} \gtrsim \varepsilon$.

We finally comment on another plausible route to establishing the spatial localization properties of
$\mathbf{K}_\ell$: use the convergence (in a suitable sense), as $n\rightarrow\infty$, of $\tilde{\mathbf{L}}$, and of its spectral decomposition, to its continuous analogue the Laplace-Beltrami operator on $(\manifold{M},\distb{\manifold{M}}, \mu)$, for which the theory of~\cite{Coulhon2012} applies
directly. Up to our knowledge, the latest developments on this delicate subject by~\cite{Trillos2018}
establish convergence of the eigenfunctions in the $L^2(\mu)$ sense, which does not appear to
be strong enough to obtain the wished pointwise estimates. Additionally, we note that the
geometrical properties (VD) and (LPI) (and resulting heat kernel estimates) are more {\em robust} than convergence of the eingendecomposition, in the sense that they encode important regularity
properties of the geometrical graph rather than the Laplacian itself. In particular, it is observed in practice that the
localization properties discussed above hold qualitatively even if the eigenfunctions themselves are
clearly still far from convergence to their continuous counterparts.



\section{Preliminaries: On distance approximation and counting points}
\label{sec:prelim}
 \subsection{Distances}
 
  We will consider balls with respect to different metrics and as subsets of different spaces.
 
 Beside the shortest-path-distance introduced in \cref{sec:setting} we will use another graph-based distance. We define
 the Euclidean graph distance
  \[\dist{G,E}{x,y}:=\min_{p\in \mathcal{P}_{x,y}}\sum_{i=1}^{\NE{p}} \norm{v_{i-1}-v_i}
 \]
 as in \cite{TechreportIsomap}.

 We consider balls in a graph with respect to the shortest-path distance, the Euclidean graph distance and the metric $\distb{\manifold{M}}$. For all $x\in \mathcal{V}\subset \manifold{M}$ we denote
 \begin{align*}
 \oB{x}{r}{G,E}&=\{y\in \mathcal{V}: \dist{G,E}{x,y}< r\}\\
 \oB{x}{r}{G,SP}&=\{y\in \mathcal{V}: \dist{G,SP}{x,y}< r\}\\
 \oB{x}{r} {G,\manifold{M}}&= \{y\in \mathcal{V}: \dist{\manifold{M}}{x,y}< r\}=B_{\manifold{M}}(x,r)\cap \mathcal{V}. 
 \end{align*}
 
  Balls in the submanifold are defined using the geodesic distance $\distb{\manifold{M}}$ and the Euclidean distance respectively. We denote
 \begin{align*}
 \oB{x}{r} {\manifold{M}}&=\{y\in \manifold{M} : \dist{\manifold{M}}{x,y}< r\}~~  \forall x\in \manifold{M}, \\
 \oB{x}{r}{\manifold{M},E}&=\{y\in \manifold{M}; \dist{E}{x,y}:=\norm{x-y} <r \} ~~\forall x\in \manifold{M}.
 \end{align*}
 Similarly, we denote corresponding closed balls as $\cB{x}{r}{G,E}$, etc.

\subsection{Distance approximations}\label{sec::distapp}
The first step on our way to prove the main results is to establish a link between the shortest-path distance $\distb{G,SP}$ of the graph and the geodesic distance $\distb{\manifold{M}}$ of the submanifold. It was proved in \cite{TechreportIsomap} that under \cref{ass::manifold,ass::graph,ass::isomap}  $\distb{G,E}\approx \distb{\manifold{M}}$ holds with high probability. On the other hand we will prove that  $\distb{G,E}$ can be approximated by the shortest-path-distance in the graph ($\distb{G,SP}$).

We recall now the theorem from \cite[Main Theorem B]{TechreportIsomap} about the distance approximation $\distb{G,E}\approx \distb{\manifold{M}}$ and continue with the link between $\distb{G,E}$ and $\distb{G,SP}$.
\begin{theorem}[distance approximation 1]
	\label{theo::dist1}
		Let $G=(\mathcal{V}, \mathcal{E})$ be an $\varepsilon$-graph defined from an i.i.d. sample of size $n$ from  the probability  measure $\mu$ on the submanifold $\manifold{M}$ of $\RZ^K$ such that \cref{ass::manifold,ass::graph,ass::isomap} 
	are satisfied with parameters  $\lambda_1, \lambda_2, \pr{1}, \varepsilon>0, n\geq 2$.
	Then  it holds, with probability at least $1-\pr{1}$,
	for all $x,y\in \mathcal{V}(G)$
	
	\begin{align}(1-\lambda_1) \dist{\manifold{M}}{x,y} \leq \dist{G,E}{x,y}\leq (1+\lambda_2) \dist{\manifold{M}}{x,y}.\end{align}
\end{theorem}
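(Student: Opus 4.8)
The plan is to prove the two inequalities separately. Following \cite{TechreportIsomap}, only the upper bound is probabilistic: the lower bound holds deterministically for every realisation of $\mathcal{V}$ as soon as \cref{A3eps} is in force, and the sampling hypothesis \cref{A3n} is needed only for the upper bound. The geometric core I would isolate first is a chord/arc comparison on $\manifold{M}$: if $x,y\in\manifold{M}$ with $\norm{x-y}\leq\varepsilon<\cs$, then by the very definition of $\cs$ one has $\dist{\manifold{M}}{x,y}\leq\pi\crr$, and parametrising the minimising geodesic by unit speed and using $\max_t\norm{\ddot{\gamma}(t)}=1/\crr$ together with the elementary bounds $\sin t\geq\tfrac{2}{\pi}t$ and $\sin t\geq t-\tfrac{t^{3}}{6}$ on $[0,\pi/2]$, one obtains
\[
\tfrac{2}{\pi}\,\dist{\manifold{M}}{x,y}\ \leq\ \norm{x-y}\ \leq\ \dist{\manifold{M}}{x,y}
\qquad\text{and}\qquad
\norm{x-y}\ \geq\ \dist{\manifold{M}}{x,y}\Bigl(1-\frac{\dist{\manifold{M}}{x,y}^{2}}{24(\crr)^{2}}\Bigr).
\]
The first chain gives $\dist{\manifold{M}}{x,y}\leq\tfrac{\pi}{2}\varepsilon$, and then \cref{A3eps}, i.e.\ $\varepsilon<\tfrac{2}{\pi}\crr\sqrt{24\lambda_1}$, forces $\dist{\manifold{M}}{x,y}^{2}\leq24\lambda_1(\crr)^{2}$, so the second inequality becomes $\norm{x-y}\geq(1-\lambda_1)\dist{\manifold{M}}{x,y}$. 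Hence every pair of $G$-adjacent points $x\sim y$ satisfies $\norm{x-y}\geq(1-\lambda_1)\dist{\manifold{M}}{x,y}$.

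For the lower bound I would then take any path $p=(v_0,\dots,v_l)\in\mathcal{P}_{x,y}$: each $v_{i-1}\sim v_i$ is an edge, so by the previous step and the triangle inequality for $\distb{\manifold{M}}$,
\[
\sum_{i=1}^{l}\norm{v_{i-1}-v_i}\ \geq\ (1-\lambda_1)\sum_{i=1}^{l}\dist{\manifold{M}}{v_{i-1},v_i}\ \geq\ (1-\lambda_1)\dist{\manifold{M}}{x,y};
\]
minimising over $p$ yields $\dist{G,E}{x,y}\geq(1-\lambda_1)\dist{\manifold{M}}{x,y}$, with no randomness involved.

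For the upper bound I would set $\delta_0:=\varepsilon\lambda_2/16$ and fix a maximal $2\delta_0$-separated set $z_1,\dots,z_N\in\manifold{M}$ (finite, as $\manifold{M}$ is compact); then the balls $B(z_j,2\delta_0)$ cover $\manifold{M}$ while the balls $B(z_j,\delta_0)$ are pairwise disjoint, so with $u=\inf_{z\in\manifold{M}}\mu(B(z,\delta_0))$ as in \cref{A3n} we get $Nu\leq\sum_j\mu(B(z_j,\delta_0))\leq1$, i.e.\ $N\leq1/u$. Let $\mathcal{G}$ be the event that each $B(z_j,\delta_0)$ contains a sample point; since $\prob{B(z_j,\delta_0)\cap\mathcal{V}=\emptyset}=(1-\mu(B(z_j,\delta_0)))^{n}\leq e^{-nu}$, a union bound gives $\prob{\mathcal{G}^{c}}\leq Ne^{-nu}\leq u^{-1}e^{-nu}\leq\pr{1}$, the last inequality being exactly \cref{A3n}. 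On $\mathcal{G}$ I would fix distinct $x,y\in\mathcal{V}$ and a unit-speed minimising geodesic $\gamma:[0,L]\to\manifold{M}$ with $L=\dist{\manifold{M}}{x,y}$; since every $\gamma(t)$ lies within $2\delta_0$ of some $z_j$, it lies within $3\delta_0$ of a sample point. If $L<\varepsilon$ then $\norm{x-y}\leq L<\varepsilon$, so $x\sim y$ and $\dist{G,E}{x,y}\leq\norm{x-y}\leq(1+\lambda_2)L$. If $L\geq\varepsilon$, set $m:=\lceil L/(\varepsilon(1-3\lambda_2/8))\rceil$ and $t_i:=iL/m$, so $h:=L/m\leq\varepsilon(1-3\lambda_2/8)$; pick a sample point $w_i\in B(\gamma(t_i),3\delta_0)$ for $0<i<m$ and put $w_0:=x$, $w_m:=y$. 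Then for each $i$,
\[
\norm{w_{i-1}-w_i}\ \leq\ \dist{\manifold{M}}{w_{i-1},w_i}\ \leq\ 3\delta_0+h+3\delta_0\ =\ h+\tfrac{3}{8}\varepsilon\lambda_2\ \leq\ \varepsilon
\]
(the middle geodesic step being bounded by the arc length $h$), so $w_{i-1}\sim w_i$, $(w_0,\dots,w_m)\in\mathcal{P}_{x,y}$, and, summing the same estimate with the two endpoint perturbations equal to $0$,
\[
\dist{G,E}{x,y}\ \leq\ \sum_{i=1}^{m}\norm{w_{i-1}-w_i}\ \leq\ L+6(m-1)\delta_0\ <\ L+\frac{6\delta_0 L}{\varepsilon(1-3\lambda_2/8)}\ =\ L+\frac{3\lambda_2 L}{8-3\lambda_2}\ \leq\ (1+\lambda_2)L,
\]
using $\lambda_2<1$. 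Together with the deterministic lower bound this would prove the theorem, on the event $\mathcal{G}$ of probability at least $1-\pr{1}$.

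The step I expect to be the main obstacle is the constant bookkeeping in the upper bound: one must match the covering scale ($2\delta_0$), the admissible perturbation radius ($3\delta_0$) and the subdivision spacing ($h\leq\varepsilon(1-3\lambda_2/8)$) so that simultaneously (i) every hop $w_{i-1}\sim w_i$ is a genuine $\varepsilon$-edge, (ii) the accumulated Euclidean length exceeds $L$ only by a factor $1+\lambda_2$, and (iii) the covering number is at most $1/u$ with $u$ exactly the infimum measure appearing in \cref{A3n}. The constant $16$ in $\delta_0=\varepsilon\lambda_2/16$ is chosen precisely so that all three fit at once (any sufficiently large constant would do); everything else — the chord/arc estimate and the telescoping triangle inequalities — is routine.
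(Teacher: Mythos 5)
Your argument is correct and it is, up to minor bookkeeping choices (the covering scale $2\delta_0$ and the $3\delta_0$ net radius instead of the $\lambda_2\varepsilon/4$ $\delta$-sampling condition used in the original), a faithful reconstruction of the proof of Main Theorem~B in \cite{TechreportIsomap}, which is exactly the source the paper defers to here: the paper itself gives no proof of this statement, writing only ``For the proof see \cite[proof of Main Theorem~B]{TechreportIsomap}.'' The only adaptation you make — and make correctly — is to replace the Poisson sampling model of the original by the i.i.d.\ Bernoulli-type bound $(1-\mu(B(z_j,\delta_0)))^{n}\leq e^{-nu}$, which is the right thing to do since the present paper assumes i.i.d.\ draws, and this matches the form of the sampling condition \cref{A3n}.
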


\begin{remark}
		The relationship of the parameters $\pr{1}, n, \varepsilon, \lambda_1, \lambda_2$ is determined  by \cref{ass::isomap}: \linebreak $n\geq n_{min}(\pr{1}, \varepsilon, \lambda_2, \mu)$  (sampling condition), and $\varepsilon< \varepsilon_{max}(\cs,\crr, \lambda_1)$.
\end{remark}
For the proof see \cite[ proof of Main Theorem B]{TechreportIsomap}.

\begin{theorem}[distance approximation 2] 
	\label{thm::1} 
	\label{theo::dist2}
	Let $G$  be an $\varepsilon$-graph. For all $x,y \in \mathcal{V}(G)$, it holds
	\begin{align}\label{ineq::dist2}
	\frac{1}{4}\; \varepsilon \; \left(\dist{G,SP}{x,y}-1\right) \leq \dist{G,E}{x,y} \leq \varepsilon\; \dist{G,SP}{x,y}.\end{align}
\end{theorem}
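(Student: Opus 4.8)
The plan is to prove the two inequalities in \eqref{ineq::dist2} separately; both reduce to comparing two notions of "shortest path'', where $\distb{G,SP}$ counts edges while $\distb{G,E}$ sums the Euclidean lengths of edges along a path. If $x$ and $y$ lie in different connected components, then both sides of \eqref{ineq::dist2} are $+\infty$ and there is nothing to prove, so assume they are joined by a path.

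\emph{Upper bound.} Let $p=(v_0,\ldots,v_l)\in\mathcal{P}_{x,y}$ realise $\distb{G,SP}$, i.e.\ $l=\NE{p}=\dist{G,SP}{x,y}$. By the $\varepsilon$-rule \eqref{constr} every edge satisfies $\norm{v_{i-1}-v_i}\le\varepsilon$, so $p$ is an admissible path for $\distb{G,E}$ with $\sum_{i=1}^l\norm{v_{i-1}-v_i}\le l\varepsilon$; taking the minimum over $\mathcal{P}_{x,y}$ yields $\dist{G,E}{x,y}\le\varepsilon\,\dist{G,SP}{x,y}$.

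\emph{Lower bound.} This is equivalent to $\dist{G,SP}{x,y}\le\tfrac4\varepsilon\,\dist{G,E}{x,y}+1$, and the cases $\dist{G,SP}{x,y}\in\{0,1\}$ are trivial, so assume $\dist{G,SP}{x,y}\ge2$. Fix a \emph{simple} path $q=(w_0,\ldots,w_m)\in\mathcal{P}_{x,y}$ realising $\dist{G,E}{x,y}=\sum_{i=1}^m\norm{w_{i-1}-w_i}$ (the minimum is attained since there are finitely many simple paths, and deleting cycles never increases Euclidean length). From $q$ I would extract a "skeleton'' that is still a path in $G$ but uses few edges: put $i_0=0$ and, as long as $i_t<m$, let $i_{t+1}$ be the \emph{largest} index $j>i_t$ with $\norm{w_{i_t}-w_j}\le\varepsilon$; this is well defined because $j=i_t+1$ already qualifies (so $w_{i_t}\sim w_{i_t+1}$), and since the $i_t$ strictly increase within $\{0,\ldots,m\}$ the procedure stops at some $i_s=m$. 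By construction $w_{i_t}\sim w_{i_{t+1}}$ for each $t$, hence $(w_{i_0},\ldots,w_{i_s})\in\mathcal{P}_{x,y}$ and $\dist{G,SP}{x,y}\le s$. It remains to bound $s$ by the Euclidean length of $q$: for every $t\le s-2$ one has $i_{t+1}<m$, so $w_{i_{t+1}+1}$ lies on $q$, and maximality of $i_{t+1}$ forces $\norm{w_{i_t}-w_{i_{t+1}+1}}>\varepsilon$; by the triangle inequality the stretch of $q$ between $w_{i_t}$ and $w_{i_{t+1}+1}$ has Euclidean length $\sum_{i=i_t+1}^{i_{t+1}+1}\norm{w_{i-1}-w_i}>\varepsilon$. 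Since $i_{t+2}\ge i_{t+1}+1$, the edge sets of these stretches are pairwise disjoint over distinct \emph{even} $t$, of which there are at least $(s-1)/2$ in $\{0,\ldots,s-2\}$; summing, $\dist{G,E}{x,y}\ge\tfrac{s-1}{2}\varepsilon\ge\tfrac14(\dist{G,SP}{x,y}-1)\varepsilon$.

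The delicate point is the final counting step. One cannot simply lower-bound $\norm{w_{i_t}-w_{i_{t+1}}}$, because the greedy jump can land at a Euclidean-nearby vertex when $q$ meanders; the remedy is to use the "overshoot'' vertex $w_{i_{t+1}+1}$, which by maximality is genuinely more than $\varepsilon$ away from $w_{i_t}$, and then to restrict to every other block so that the counted edges are not reused. This in fact gives the factor $1/2$, so the constant $1/4$ in \eqref{ineq::dist2} leaves comfortable slack.
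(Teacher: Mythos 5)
Both directions of your argument are correct, but the lower bound takes a genuinely different route from the paper's. The paper picks a path $p^*$ minimizing Euclidean graph length and, among those minimizers, having the fewest edges; it then observes that no two consecutive edges of $p^*$ can both have Euclidean length $\le\varepsilon/2$ (otherwise the triangle inequality makes the two endpoints adjacent in the $\varepsilon$-graph, and deleting the middle vertex contradicts the edge-count minimality of $p^*$), so at least $(\NE{p^*}-1)/2$ edges of $p^*$ are longer than $\varepsilon/2$, giving $\dist{G,E}{x,y}\ge\tfrac{\varepsilon}{4}\bigl(\NE{p^*}-1\bigr)\ge\tfrac{\varepsilon}{4}\bigl(\dist{G,SP}{x,y}-1\bigr)$. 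Your argument instead starts from an arbitrary $\distb{G,E}$-minimizing path, greedily extracts a skeleton path by jumping along the original path to the farthest index still within $\varepsilon$, and lower-bounds $\dist{G,E}{x,y}$ via the disjoint ``overshoot'' stretches taken over every other skeleton step. The paper's argument is shorter and uses a clever doubly-optimal choice of path; yours avoids that choice at the cost of some index bookkeeping (the parity trick to guarantee disjointness). Your approach actually proves the sharper bound $\tfrac{\varepsilon}{2}\bigl(\dist{G,SP}{x,y}-1\bigr)\le\dist{G,E}{x,y}$, so the factor $\tfrac14$ in \eqref{ineq::dist2} is strictly looser than what you obtain.
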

\begin{proof}
	First we show  $\dist{G,E}{x,y}\leq \varepsilon \,  \dist{G,SP}{x,y}$. 
	For arbitrary $x,y \in \mathcal{V}$ let $\mathcal{P}_{x,y}$ the set of paths connecting $x$ to $y$ and $\EL{p}:=\sum_{i=1}^{\NE{p}}\norm{v_i-v_{i-1}} $ the Euclidean length of a path $p=(v_0,\ldots,v_{\NE{p}})$. 
	By definition we have 
	\[\dist{G,E}{x,y}=\min_{p\in\mathcal{P}_{x,y}}\EL{p}\leq \EL{q}\]
	for all $q\in\mathcal{P}_{x,y}$. For any path $q\in\mathcal{P}_{x,y}$ with $l$ edges we get
	\begin{align*}EL(q)=\sum_{i=0}^{l-1}\distb{E}(x_i,x_{i+1})\leq l \max_{i}\distb{E}(x_i,x_{i+1})\leq l \cdot  \varepsilon \end{align*}
	since for any edge in an $\varepsilon$-graph: $\distb{E}(x_i,x_{i+1})\leq \varepsilon$.
	Now we choose a path $q^*\in\mathcal{P}_{x,y}$ with minimal number of edges:$~~l^*=\min_{p\in\mathcal{P}_{x,y}}  \NE{p}=\dist{G,SP}{x,y}$.\\
	So we get $EL(q^*)\leq \varepsilon \cdot \dist{G,SP}{x,y}$.
	Summarized we have 
	$\dist{G,E}{x,y}\leq EL(q^*)\leq \varepsilon\cdot \dist{G,SP}{x,y}$.\\
	
	As a second step we show $\dist{G,E}{x,y}\geq 1/4 \cdot \varepsilon \; (\dist{G,SP}{x,y}-1) $.\\
	Let $x,y \in \mathcal{V}, x\neq y$ be given and assume that $x$ and $y$ are not neighbors (if $x=y$ or $x\sim y$, then the lower bound in \cref{ineq::dist2} is trivial). 
	We choose $p^*\in\mathcal{P}_{x,y}$ such that $p^*\in \argmin_{p\in\mathcal{P}_{x,y}}\EL{p}=S_{xy}$ and $\NE{p^*}=\min_{p\in S_{xy}}\NE{p}=l^*\geq 2$  (that is $p^*$ is a path with minimal number of edges in the set of paths with minimal  Euclidean graph distance). 
	Notice that there are no two adjacent edges of the path are smaller then $\varepsilon/2$; by contradiction if $v_{i-1},v_i,v_{i+1}$ were vertices with $\distb{E}(v_{i-1},v_{i+1} )\leq \distb{E}(v_{i-1},v_i)+\distb{E}(v_i,v_{i+1}) \leq \varepsilon/2 + \varepsilon/2=\varepsilon$ (hence $v_{i-1}\sim v_{i+1}$), then the path without $v_i$ would have smaller length. Therefore at least  $(l^*-1)/2 $ edges of the path $p^*$ have Euclidean length  $>\varepsilon/2$.
	
	Therefore $\sum_{i=0}^{l^*-1}\distb{E}(x_i,x_{i+1})\geq \frac{l^*-1}{2} \frac{\varepsilon}{2}$. \\Obviously $\NE{p^*}\geq \dist{G,SP}{x,y}$.
	
	
	 So we get
	\begin{align*}
	\dist{G,E}{x,y}=\EL{p^*}=\sum_{i=0}^{l^*-1}\distb{E}(x_i,x_{i+1})\geq \frac{1}{2\cdot 2}\varepsilon (l^*-1)\geq \frac{1}{4}\varepsilon \left(\dist{G,SP}{x,y}-1\right).
	\end{align*}
\end{proof}

Note that in particular $\overline{B}_{G,SP}(x,1) = \overline{B}_{G,E}(x,\varepsilon)$ holds which is obvious by the construction of the $\varepsilon$-graph (see \cref{constr}) and \cref{theo::dist2}.

As an immediate consequence  we can relate the manifold distance and the shortest path distance.

\begin{cor}\label{cor::dist}
		Let $G=(\mathcal{V}, \mathcal{E})$ be an $\varepsilon$-graph defined from an i.i.d. sample of size $n$ from  the probability  measure $\mu$ on the submanifold $\manifold{M}$ of $\RZ^K$ such that \cref{ass::manifold,ass::graph,ass::isomap} 
	are satisfied with parameters  $\lambda_1, \lambda_2, \pr{1}, \varepsilon>0, n\geq 2$.
Then  it holds, with probability at least $1-\pr{1}$,
	for all $x,y\in \mathcal{V}(G)$:
	\begin{align}
	\frac {(1-\lambda_1)}{\varepsilon} \dist{\manifold{M}}{x,y}&\leq \dist{G,SP}{x,y}  \leq \frac{4 (1+\lambda_2)}{\varepsilon} \dist{\manifold{M}}{x,y} +1.  \label{ineq::dist}
	\end{align}
	Consequently, with  probability at least $1-\pr{1}$, the inclusions
	\begin{align}\label{ballinclusion}
	\oB{x}{r}{G,SP} &\subseteq \oB{x}{(1-\lambda_1)^{-1}\varepsilon \; r}{G,\manifold{M}},\\
	\oB{x}{r} {G,\manifold{M}}& \subseteq \oB{x}{4\; \frac{1+\lambda_2}{\varepsilon} \; r+1}{G,SP}
	\end{align}
	hold for all $x\in \mathcal{V}$, $r>0$.
\end{cor}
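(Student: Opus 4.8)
The plan is to derive \cref{ineq::dist} by simply chaining the two distance comparisons already available, namely \cref{theo::dist1} (valid with probability at least $1-\pr{1}$, simultaneously over all pairs in $\mathcal{V}(G)$) and the deterministic two-sided estimate \cref{theo::dist2}, and then to read off the ball inclusions \cref{ballinclusion} as immediate rewritings.

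First, for the lower bound on $\dist{G,SP}{x,y}$: the right inequality of \cref{ineq::dist2} gives $\dist{G,E}{x,y}\leq \varepsilon\,\dist{G,SP}{x,y}$, hence $\dist{G,SP}{x,y}\geq \varepsilon^{-1}\dist{G,E}{x,y}$, and combining this with the left inequality of \cref{theo::dist1}, $\dist{G,E}{x,y}\geq (1-\lambda_1)\dist{\manifold{M}}{x,y}$, yields $\dist{G,SP}{x,y}\geq (1-\lambda_1)\varepsilon^{-1}\dist{\manifold{M}}{x,y}$. Second, for the upper bound: the left inequality of \cref{ineq::dist2} gives $\tfrac14\varepsilon(\dist{G,SP}{x,y}-1)\leq \dist{G,E}{x,y}$, which together with the right inequality of \cref{theo::dist1}, $\dist{G,E}{x,y}\leq (1+\lambda_2)\dist{\manifold{M}}{x,y}$, and solving for $\dist{G,SP}{x,y}$, gives $\dist{G,SP}{x,y}\leq 4(1+\lambda_2)\varepsilon^{-1}\dist{\manifold{M}}{x,y}+1$. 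Since \cref{theo::dist2} holds for every $\varepsilon$-graph unconditionally while \cref{theo::dist1} holds on an event of probability at least $1-\pr{1}$, both inequalities hold on that same event for all $x,y\in\mathcal{V}(G)$, which is \cref{ineq::dist}.

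On that event the inclusions follow directly by monotonicity. If $y\in\oB{x}{r}{G,SP}$, i.e.\ $\dist{G,SP}{x,y}<r$, the lower bound in \cref{ineq::dist} forces $\dist{\manifold{M}}{x,y}<(1-\lambda_1)^{-1}\varepsilon r$, so $y\in\oB{x}{(1-\lambda_1)^{-1}\varepsilon \; r}{G,\manifold{M}}$; and if $y\in\oB{x}{r}{G,\manifold{M}}$, the upper bound forces $\dist{G,SP}{x,y}<4(1+\lambda_2)\varepsilon^{-1}r+1$, so $y$ lies in $\oB{x}{4\frac{1+\lambda_2}{\varepsilon}r+1}{G,SP}$. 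There is no genuine obstacle here; the only point requiring a little care is the bookkeeping of the additive $+1$ term — an artifact of the discreteness already visible in the lower comparison of \cref{theo::dist2} — when translating between the two radius scales, and checking that the strict inequalities defining the open balls are preserved, which they are since all the comparisons involved are monotone in the underlying distances.
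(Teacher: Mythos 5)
Your proof is correct and follows exactly the route the paper intends: the paper explicitly labels \cref{cor::dist} as an ``immediate consequence'' of \cref{theo::dist1,theo::dist2} and gives no separate argument, and your chaining of the two bounds plus the monotonicity rewriting into ball inclusions is precisely that immediate consequence, spelled out.
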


\subsection{On counting points in sets}\label{counting}
We recall in this section some results stated in \cite{Ulrike2014} in our notation. \\
These results give bounds on the minimal and maximal number of sample points in a collection of subsets which includes the special case of minimal and maximal degree in a $\varepsilon$-graph.
They are based on a well-known concentration inequality for a binomial-distributed random variable
(see e.g. 
\cite[prop 2.4]{angluin77},\cite{chernoff52} or \cite{hoeffding63}) which we recall here.
\begin{theorem}[concentration inequalities for binomials]\label{concIneq}
	If $N\sim \BinV(n,p)$ (with $p\in[0,1]$), 
	then it holds
	\begin{align}
	&\forall \delta\in (0,1)~~~&\prob{N\leq (1-\delta) np}& 
	\leq  \exp\left(-\frac{1}{3}\delta^2 n p\right),\label{concIn}\\
	&\forall \delta\in (0,1]~~~&\prob{N\geq (1+\delta) np} &\leq \exp\left(-\frac{1}{3}\delta^2 n p\right).\label{concIn2}
	\end{align}
\end{theorem}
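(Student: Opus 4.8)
The plan is to apply the classical exponential (Chernoff) bounding technique. Write $N=\sum_{i=1}^{n}X_i$ with $X_1,\dots,X_n$ i.i.d.\ $\BV(p)$ random variables, so that the moment generating function factorizes; the case $p=0$ is trivial, so assume $p>0$.

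For the upper tail \eqref{concIn2}, fix $\delta\in(0,1]$ and $t>0$. By Markov's inequality applied to $\exp(tN)$ together with independence,
\begin{align*}
\prob{N\geq(1+\delta)np}\leq\exp\bigl(-t(1+\delta)np\bigr)\,\ew{\exp(tN)}=\exp\bigl(-t(1+\delta)np\bigr)\bigl(1-p+p\exp(t)\bigr)^{n}.
\end{align*}
Using $1-p+p\exp(t)=1+p(\exp(t)-1)\leq\exp\bigl(p(\exp(t)-1)\bigr)$ and then minimizing over $t$ through the choice $t=\ln(1+\delta)>0$ gives
\begin{align*}
\prob{N\geq(1+\delta)np}\leq\exp\bigl(np(\delta-(1+\delta)\ln(1+\delta))\bigr),
\end{align*}
so \eqref{concIn2} follows once the scalar estimate $(1+\delta)\ln(1+\delta)-\delta\geq\tfrac13\delta^{2}$ is verified on $(0,1]$. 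The lower tail \eqref{concIn} is handled symmetrically: for $\delta\in(0,1)$ and $t>0$, Markov applied to $\exp(-tN)$ together with the same bound on the moment generating function yields
\begin{align*}
\prob{N\leq(1-\delta)np}\leq\exp\bigl(t(1-\delta)np+np(\exp(-t)-1)\bigr),
\end{align*}
and the choice $t=-\ln(1-\delta)>0$ reduces \eqref{concIn} to $(1-\delta)\ln(1-\delta)+\delta\geq\tfrac13\delta^{2}$ on $(0,1)$.

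The two remaining one-variable inequalities are handled by a derivative comparison: both sides vanish at $\delta=0$, and it suffices to check $\ln(1+\delta)\geq\tfrac23\delta$ on $[0,1]$ (for the upper-tail inequality) and $-\ln(1-\delta)\geq\delta$ on $[0,1)$ (for the lower-tail one, which in fact even gives the sharper bound $\tfrac12\delta^{2}$); the first follows by noting the difference $\ln(1+\delta)-\tfrac23\delta$ is concave with nonnegative endpoint values on $[0,1]$, the second from the power-series expansion $-\ln(1-\delta)=\sum_{k\geq1}\delta^{k}/k\geq\delta$. I expect this scalar book-keeping to be the only point needing any attention, the probabilistic step being textbook Chernoff; alternatively one may simply invoke the cited references \cite{angluin77,chernoff52,hoeffding63}.
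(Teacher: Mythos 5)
Your argument is correct, and it is the standard Chernoff/Markov exponential-moment argument that the cited references \cite{angluin77,chernoff52,hoeffding63} themselves use; the paper does not give its own proof of this statement but simply cites those sources, so you are supplying the textbook argument the authors implicitly invoke. The scalar checks are sound: for the upper tail, $f(\delta):=(1+\delta)\ln(1+\delta)-\delta-\tfrac13\delta^2$ has $f(0)=0$ and $f'(\delta)=\ln(1+\delta)-\tfrac23\delta$, which is concave in $\delta$ and nonnegative at $\delta=0$ and $\delta=1$ (since $\ln 2>2/3$), hence nonnegative on $[0,1]$; for the lower tail, $g(\delta):=(1-\delta)\ln(1-\delta)+\delta$ has $g(0)=0$ and $g'(\delta)=-\ln(1-\delta)\geq\delta$, so indeed $g(\delta)\geq\tfrac12\delta^2\geq\tfrac13\delta^2$.
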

 Let $n_B$ denote the random number of points out of  $\{X_1, \ldots, X_n\} \subset \manifold{M}$ in an (open or closed) non-random ball $B=B_{\manifold{M},\dista}(x,r)$ for fixed $x$ and $r$ w.r.t.\ the metric $\dista$. Then $n_B$ is binomial distributed with parameters $n$ and $p=\mu(B)$. 
As a consequence of \cref{concIneq} the number $n_B$ of points in $B$ is bounded from below and above with high probability: to be precise the inequality 
\begin{align}\label{ineq::binomial}
(1-\delta) n \mu(B) \leq n_B \leq (1+\delta) n \mu(B)
\end{align} 
holds with probability at least $1-\pr{5}(B,n,\delta)$ (with $\pr{5}:=2 \exp\left(-\frac{\delta^2 n \mu(B)}{3} \right) $).

\begin{cor}\label{cor:counting}
	Let $\mathcal{V}$ be an i.i.d. sample of size $n$ of the  probability measure $\mu$ on the submanifold $\manifold{M}$ of $\RZ^K$.
	Let $B_1,\ldots,B_l$ be a collection of balls in $\manifold{M}$  and $N_i$ the  number of points in the ball $B_i$ and $w_i=\mu(B_i)$.
	Then for all $\delta\in(0,1]$ it holds
	\begin{align}
	\prob{N_{min} \leq (1-\delta) n w_-} \leq l\cdot \exp\left(-\frac{1}{3}\delta^2n w_-\right)\\
	\prob{N_{max} \geq (1+\delta) n w_+} \leq l\cdot \exp\left(-\frac{1}{3}\delta^2nw_- \right)
	\end{align}
	with $w_-:=\min_{i=1,\ldots,l} \mu(B_i)$ and  $w_+:=\max_{i=1,\ldots,l} \mu(B_i)$. 
\end{cor}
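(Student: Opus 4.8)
The plan is to reduce the statement to the single-ball tail bounds of \cref{concIneq} combined with a union bound over the $l$ balls. The key observation, already recorded just before the statement, is that since $X_1,\dots,X_n$ are i.i.d.\ with law $\mu$ and the $B_i$ are fixed (non-random), for each index $i$ the random variables $\Indik_{X_j\in B_i}$, $j=1,\dots,n$, are i.i.d.\ $\BV(w_i)$, so that $N_i=\sum_{j=1}^n\Indik_{X_j\in B_i}\sim\BinV(n,w_i)$. Hence, for each individual index $i$, both tails of $N_i$ are governed by \cref{concIneq} applied with $p=w_i$. Writing $N_{min}=\min_{i}N_i$ and $N_{max}=\max_{i}N_i$, the events in the statement are simple unions of single-ball events.

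For the lower estimate, first note that $w_-\le w_i$ and $1-\delta\ge 0$ give $(1-\delta)nw_-\le(1-\delta)nw_i$, so $\{N_i\le(1-\delta)nw_-\}\subseteq\{N_i\le(1-\delta)nw_i\}$; applying \eqref{concIn} with $p=w_i$ and then using $w_i\ge w_-$ in the exponent yields $\prob{N_i\le(1-\delta)nw_-}\le\exp(-\tfrac13\delta^2nw_i)\le\exp(-\tfrac13\delta^2nw_-)$. Since $\{N_{min}\le(1-\delta)nw_-\}=\bigcup_{i=1}^l\{N_i\le(1-\delta)nw_-\}$, a union bound over the $l$ balls gives the first claimed inequality. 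The upper estimate is symmetric: from $w_+\ge w_i$ one gets $\{N_i\ge(1+\delta)nw_+\}\subseteq\{N_i\ge(1+\delta)nw_i\}$, \eqref{concIn2} applied with $p=w_i$ gives $\prob{N_i\ge(1+\delta)nw_+}\le\exp(-\tfrac13\delta^2nw_i)\le\exp(-\tfrac13\delta^2nw_-)$, and the union bound over $\{N_{max}\ge(1+\delta)nw_+\}=\bigcup_{i=1}^l\{N_i\ge(1+\delta)nw_+\}$ finishes the argument.

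I do not expect any genuine obstacle here: this is a routine union-bound estimate, and the only points deserving attention are bookkeeping ones. One must shift the threshold from $w_i$ to $w_-$ (resp.\ $w_+$) \emph{and}, separately, weaken the exponent from $w_i$ down to $w_-$, checking that both monotonicities point in compatible directions. Secondly, \eqref{concIn} is stated only for $\delta\in(0,1)$ whereas the corollary allows $\delta=1$; at the endpoint the lower-tail event is $\{N_i=0\}$, which has probability $(1-w_i)^n\le e^{-nw_i}\le e^{-\frac13 nw_i}\le e^{-\frac13 nw_-}$, so the same union bound applies there as well.
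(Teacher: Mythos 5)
Your proof is correct and follows essentially the same route as the paper's: identify each $N_i\sim\BinV(n,w_i)$, use the monotonicities $w_-\le w_i\le w_+$ both to shift the threshold and to weaken the exponent, apply the single-ball concentration bounds, and take a union bound over the $l$ balls. The one thing you add that the paper glosses over is the endpoint $\delta=1$: the paper's Corollary claims the result for all $\delta\in(0,1]$ but invokes a lower-tail bound (\cref{concIneq}) stated only for $\delta\in(0,1)$, and your direct check $\prob{N_i=0}=(1-w_i)^n\le e^{-nw_i}\le e^{-\frac13 nw_-}$ correctly closes that small gap.
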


\begin{proof}
	This is a consequence of \cref{concIneq} and the union bound.
\begin{align*}
	\prob{N_{min}\leq (1-\delta)n w_-} \leq \sum_{i=1}^{l} \prob{N\leq (1-\delta)n w_i} \leq l \cdot \exp\left(-\frac{1}{3} \delta^2 n w_- \right)
	\end{align*}
	\begin{align*}
		\prob{N_{max}\geq (1+\delta)n w_+} \leq \sum_{i=1}^{l} \prob{N\leq (1+\delta)n w_i} \leq l \cdot \exp\left(-\frac{1}{3}\delta^2n w_- \right)
\end{align*}
\end{proof}
Now we allow for a random center point of the ball. To be more precise we consider random balls $B=B_{\manifold{M},\dista}(X_i, r)$. Then the random variable $n_B-1$ given $X_i$ is binomial distributed $\BinV(n-1, \mu(B))$. It follows that 
\begin{align}\label{ineq::binom2}
\prob{(1-\delta)  \mu(B) \leq \frac{n_B -1}{n-1} \leq (1+\delta) \mu(B)\given X_i}\geq 1-\pr{5}(B,n-1,\delta)
\end{align}
holds. Note that $\pr{5}:=2 \exp\left(-\frac{\delta^2 (n-1) \mu \left( B \right)}{3}\right) $ depends on $X_i$.

These results are the basis in order to get bounds for the vertex degrees in an unweighted $\varepsilon$-graph satisfying \cref{ass::graph}.
The degree $\grad(X_i)$ in an unweighted $\varepsilon$-graph is the number of neighbors of $X_i$. A random vertex point $X_j$ is a neighbor of $X_i$ if $X_j \in \overline{B}_{\manifold{M},E}(X_i,r)$.

This leads to the following bounds on the minimal and maximal degree of a vertex in the random $\varepsilon$-graph (cf. 
 \cite[prop. 29]{Ulrike2014}).
We use the following notation:
\begin{align*}
m_i&=\mu(\overline{B}_{\manifold{M},E}(X_i,\varepsilon)),\\ m_{min}&=\min_{i=1..n}\mu(\overline{B}_{\manifold{M},E}(X_i,\varepsilon)),\\ m_{max}&=\max_{i=1..n}\mu(\overline{B}_{\manifold{M},E}(X_i,\varepsilon)),\\ M&:=\min_{x\in \manifold{M}}\mu(\overline{B}_{\manifold{M},E}(x,\varepsilon)).
\end{align*}
Note that $m_i, m_{min}, m_{max}$ are random quantities and $M\leq m_{min}\leq m_i\leq m_{max}$.
\nomenclature[Cz]{$m_i$}{$\mu(\overline{B}_{\manifold{M},E}(X_i,\varepsilon))$, needed for bounds on degrees}
\nomenclature[Cz]{ $m_{min}$ }{$\min_{i=1..n}\mu(\overline{B}_{\manifold{M},E}(X_i,\varepsilon))$, needed for bounds on degrees}
\nomenclature[Cz]{$m_{max}$}{$\max_{i=1..n}\mu(\overline{B}_{\manifold{M},E}(X_i,\varepsilon))$, needed for bounds on degrees}
\nomenclature[Cz]{$M$}{$\min_{x\in \manifold{M}}\mu(\overline{B}_{\manifold{M},E}(x,\varepsilon))$, needed for bounds on degrees}
\begin{theorem}[degrees in $\varepsilon$-graph]
	\label{theo::edegrees}
	Let $G=(\mathcal{V}, \mathcal{E})$ satisfy \cref{ass::graph}
	Let $m_i, m_{min}, m_{max}
$ and $M$ be as defined above.

		Then  $\forall \delta\in (0,1]$ we have
	\begin{align*}
	\forall i=1,\ldots,n:~ \prob{\grad(X_i) \geq (1+\delta) (n-1) m_i\given X_i}
	& \leq   \exp\left(-\delta^2 (n-1) m_i/3\right)
	\end{align*}
		\begin{align}
	\prob{\grad_{max}\geq (1+\delta) (n-1) m_{max}}
	& \leq  n \exp\left(-\delta^2 (n-1) M /3\right)
	\end{align}
	and  $\forall \delta\in (0,1)$
	\begin{align*}
		\forall i=1,\ldots,n:~\prob{\grad(X_i) \leq (1-\delta) (n-1) m_i \given X_i} 
	& \leq   \exp\left(-\delta^2 (n-1) m_i/3\right)
	\end{align*}
		\begin{align} 
	\prob{\grad_{min}\leq (1-\delta) (n-1) m_{min}} & \leq  n \exp\left(-\delta^2 (n-1)  M/3\right).
	\end{align}
	If $n M /\ln(n) \rightarrow \infty$, these probabilities converge to $0$ as $n\rightarrow \infty$.
	\end{theorem}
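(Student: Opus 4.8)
The plan is to recognize each vertex degree as a binomial random variable by conditioning on the center point, apply the Chernoff bounds of \cref{concIneq}, and then pass to the extremal degrees via a union bound in the spirit of \cref{cor:counting}.

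First I would fix $i$ and condition on $X_i$. Given $X_i$, the remaining $n-1$ sample points $\{X_j : j\neq i\}$ are still i.i.d.\ with law $\mu$, while the Euclidean ball $\overline{B}_{\manifold{M},E}(X_i,\varepsilon)$ is now a \emph{deterministic} set, so each $X_j$ with $j\neq i$ lies in it independently with probability $m_i=\mu(\overline{B}_{\manifold{M},E}(X_i,\varepsilon))$. Since the graph has no self-loops (\cref{ass::graph}), $\grad(X_i)$ is exactly the number of $j\neq i$ with $\norm{X_i-X_j}\leq\varepsilon$, hence $\grad(X_i)\mid X_i\sim\BinV(n-1,m_i)$; this is precisely \eqref{ineq::binom2} read with $n_B-1=\grad(X_i)$. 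Applying \cref{concIneq} conditionally on $X_i$, with sample size $n-1$ and success probability $p=m_i$, inequality \eqref{concIn2} gives $\prob{\grad(X_i)\geq(1+\delta)(n-1)m_i\given X_i}\leq\exp(-\delta^2(n-1)m_i/3)$ for $\delta\in(0,1]$, and \eqref{concIn} gives the matching lower-tail bound $\prob{\grad(X_i)\leq(1-\delta)(n-1)m_i\given X_i}\leq\exp(-\delta^2(n-1)m_i/3)$ for $\delta\in(0,1)$. These are the first and third displays of the statement.

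For $\grad_{max}$ and $\grad_{min}$ I would replace the random radii by a deterministic bound \emph{before} summing, using $M\leq m_{min}\leq m_i\leq m_{max}$, which holds because $X_i\in\manifold{M}$ for every $i$. On the event $\{\grad_{max}\geq(1+\delta)(n-1)m_{max}\}$ there is some index attaining the maximum, say $i^\ast$, with $\grad(X_{i^\ast})\geq(1+\delta)(n-1)m_{max}\geq(1+\delta)(n-1)m_{i^\ast}$; thus this event is contained in $\bigcup_{i=1}^n\{\grad(X_i)\geq(1+\delta)(n-1)m_i\}$. The union bound together with the tower property and $m_i\geq M$ a.s.\ yields $\prob{\grad_{max}\geq(1+\delta)(n-1)m_{max}}\leq\sum_{i=1}^n\ew{\exp(-\delta^2(n-1)m_i/3)}\leq n\exp(-\delta^2(n-1)M/3)$. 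The bound for $\grad_{min}$ is obtained symmetrically, using $m_{min}\leq m_i$ and \eqref{concIn}. Finally, writing $n\exp(-\delta^2(n-1)M/3)=\exp(\ln n-\tfrac{\delta^2}{3}(n-1)M)$ and noting that $nM/\ln n\to\infty$ (together with $M\leq 1$) forces $(n-1)M/\ln n\to\infty$, the exponent tends to $-\infty$, so all four probabilities vanish as $n\to\infty$.

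This argument is essentially bookkeeping and I do not anticipate a genuine obstacle. The only points that require a little care are the conditioning step (so that the ball becomes deterministic and the $n-1$ remaining points are genuinely i.i.d.\ Bernoulli($m_i$)), the ``$-1$'' arising from excluding the center/self-loop, and the order of operations in the extremal estimates, namely replacing $m_i$ by the deterministic lower bound $M$ before — not after — invoking the union bound and the tower property.
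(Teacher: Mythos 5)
Your proof is correct and follows essentially the same route as the paper: condition on $X_i$ to see $\grad(X_i)\mid X_i\sim\BinV(n-1,m_i)$, apply the Chernoff bounds of \cref{concIneq}, then pass to $\grad_{\max},\grad_{\min}$ via the event inclusion, the union bound, the tower property, and the a.s.\ bound $m_i\geq M$. One small quibble: your closing remark says $m_i$ must be replaced by $M$ \emph{before} the union bound and tower property, but in the correct chain (which you actually carry out) this replacement happens last, after taking the expectation of $\exp(-\delta^2(n-1)m_i/3)$; the relation used \emph{before} the union bound is $m_{\max}\geq m_i$ (resp.\ $m_{\min}\leq m_i$), not $m_i\geq M$.
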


\begin{proof}
	The first and third inequality are immediate consequences of the concentration inequalities \cref{concIn} and \cref{concIn2} since $\grad(X_i)$ given $X_i$ is binomial distributed with $n-1$ and $p=m_i$. 
 Now notice that the quantities $\grad_{max}$ and $\grad_{min}$ depend on $X_1,\ldots, X_n$. Therefore, to obtain the other two inequalities, the union bound, conditioning on $X_i$ and the previous results are applied:
	\begin{align*}
	\prob{\grad_{max}\geq (1+\delta) (n-1) m_{max}} 
	&\leq  \sum_i \prob{\grad(X_i)\geq (1+\delta) (n-1) m_{i}}\\
	& \leq \sum_i \ewx{X_i}{ \exp\left(-\delta^2 (n-1) m_i/3\right) } \\
	&\leq n \exp\left(-\delta^2   (n-1)  M /3\right),
	\end{align*}
	\begin{align*}
	\prob{\grad_{min} \leq (1-\delta) (n-1) m_{min}} 
	&\leq  \sum_i \prob{\grad(X_i)\leq (1-\delta) (n-1) m_{i}}\\
	& \leq \sum_i \ewx{X_i}{ \exp\left(-\delta^2 (n-1) m_i/3\right) }\\
	&\leq  n \exp\left(-\delta^2   (n-1)   M/3\right).
	\end{align*}
\end{proof}

The Ahlfors regularity of $\mu$ is especially of importance for the local Poincar\'{e} inequality. We will now present some consequences of this regularity assumption.

\begin{theorem}\label{theo::ahlf}
		Let $G=(\mathcal{V}, \mathcal{E})$ be an $\varepsilon$-graph defined from an i.i.d. sample of size $n$ from  the probability  measure $\mu$ on the submanifold $\manifold{M}$ of $\RZ^K$ such that \cref{ass::manifold,ass::graph,ass::isomap,ass::Ahlfors} 
	are satisfied with parameters  $\lambda_1, \lambda_2, \pr{1}, \varepsilon>0, n\geq 2, c_l,c_u,k$.
Let $\eta=\eta_1$ be the empirical graph  measure and denote $n_B$ the number of points in a set $B$, $\delta\in(0,1)$ 
and define $\pr{5}(B, n, \delta):=2 \exp\left(-\frac{\delta^2 n \mu(B)}{3}\right)$. Assumme that $\pr{5}(B,n, \delta)<1$.
\begin{enumerate}[label=\roman*)]
	\item Then $(\manifold{M}, \distb{\manifold{M}},\mu)$ satisfies the volume doubling condition (see \cref{def::vd}) 
	with constant $v = \log_2 (\frac{c_u}{c_l} )+k$. 
\item \label{part2}Then 
	for a fixed ball $B_{\manifold{M},E}(x,r)$ with $x\in \manifold{M}$ and $r\leq \varepsilon$
 the inequality 
	\[c_l  r^k \leq \mu(B_{\manifold{M},E}(x,r))\leq c_u (1-\lambda_1)^{-k} r^k\]
holds.
Moreover, for a fixed ball $B_{\manifold{M},E}(X_i,r)$ with $x\in \mathcal{V}$ and $r\leq \varepsilon$ the inequality
	\[c_l  r^k \leq \mu(B_{\manifold{M},E}(X_i,r))\leq c_u (1-\lambda_1)^{-k} r^k\]
	holds almost surely.
	\item \label{part3}
	
	Then 
	for any fixed ball $B=B_{\manifold{M}}(x,r)$ ($x\in \manifold{M},r>0$ ), it holds with probability at least $1-\pr{5}(B, n,\delta)$ that
	\[(1-\delta)  c_l r^k \leq \frac{n_B}{n} \leq (1+\delta)  c_u r^k. \]
	For any fixed ball $B=B_{\manifold{M}}(X_i,r)$ with random center point $X_i\in \mathcal{V}$ and radius $r>0$ it holds that
%
	\begin{align*}\prob{(1-\delta)  c_l r^k \leq \frac{n_B-1}{n-1} \leq (1+\delta)  c_u r^k }\geq 
	1-\pr{6}(r, c_l, n-1, \delta)
	\end{align*}
	where $\pr{6}(r, c_l, n-1, \delta):=2 \exp\left(-\frac{\delta^2 (n-1) c_l r^k}{3}\right)$.
		
	\item \label{part4} Then 
	for any fixed ball $B=B_{G,SP}(X_i,r)$ with random center point $X_i\in \mathcal{V}$ and radius $r\geq 2$ 
	it holds with probability at least 
		$1-\pr{6}\left(0.125(1+\lambda)^{-1} \varepsilon r, c_l,n-1, \delta\right)-\pr{1}$
	  that
	\[(1-\delta) c_l \left(\frac{1}{4 \cdot 2(1+\lambda_2 )} \varepsilon \right)^k r^k \leq \frac{n_{B_{G}}-1}{n-1} \leq (1+\delta) c_u \left(\frac{\varepsilon}{1-\lambda_1}\right)^k  r^k.\]
\end{enumerate}
\end{theorem}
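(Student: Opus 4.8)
The plan is to wrap the shortest-path ball $\oB{X_i}{r}{G,SP}$ between two intrinsic metric balls on $\manifold{M}$, $\oB{X_i}{\rho_-}{G,\manifold{M}}$ and $\oB{X_i}{\rho_+}{G,\manifold{M}}$, and then transport the vertex-count estimate of part~(iii) of \cref{theo::ahlf} through these inclusions. The extra term $\pr{1}$ in the probability bound will be precisely the cost of the distance comparison from \cref{cor::dist}.

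First I would fix the radii. On the event of \cref{cor::dist}, which has probability at least $1-\pr{1}$ and on which the inclusions \cref{ballinclusion} hold for every vertex simultaneously, the first inclusion with $x = X_i$ gives $\oB{X_i}{r}{G,SP}\subseteq \oB{X_i}{\rho_+}{G,\manifold{M}}$ with $\rho_+ := (1-\lambda_1)^{-1}\varepsilon r$. The second inclusion, applied with $x = X_i$ and radius $\varepsilon(r-1)/(4(1+\lambda_2))$ and combined with the monotonicity of $\distb{G,SP}$-balls in their radius, gives $\oB{X_i}{\varepsilon(r-1)/(4(1+\lambda_2))}{G,\manifold{M}}\subseteq \oB{X_i}{r}{G,SP}$; since $r\geq 2$ forces $r-1\geq r/2$, the left-hand side contains $\oB{X_i}{\rho_-}{G,\manifold{M}}$ with $\rho_- := \varepsilon r/(8(1+\lambda_2)) = 0.125(1+\lambda_2)^{-1}\varepsilon r$. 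Hence, on this event,
\[ \oB{X_i}{\rho_-}{G,\manifold{M}}\ \subseteq\ \oB{X_i}{r}{G,SP}\ \subseteq\ \oB{X_i}{\rho_+}{G,\manifold{M}}, \]
and since the center $X_i$ lies in all three sets, the corresponding numbers of sample points obey $n_{\oB{X_i}{\rho_-}{G,\manifold{M}}}\leq n_{B_G}\leq n_{\oB{X_i}{\rho_+}{G,\manifold{M}}}$, with the same ordering after subtracting $1$.

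Next I would invoke part~(iii) of \cref{theo::ahlf}, using $\oB{X_i}{\rho}{G,\manifold{M}} = B_{\manifold{M}}(X_i,\rho)\cap\mathcal{V}$. I only need the lower half of that estimate at radius $\rho_-$ and the upper half at radius $\rho_+$: from the binomial tail bounds of \cref{concIneq} together with the Ahlfors bounds $c_l\rho_-^k\leq \mu(B_{\manifold{M}}(X_i,\rho_-))$ and $\mu(B_{\manifold{M}}(X_i,\rho_+))\leq c_u\rho_+^k$, the event $\frac{n_{\oB{X_i}{\rho_-}{G,\manifold{M}}}-1}{n-1}\geq (1-\delta)c_l\rho_-^k$ fails with probability at most $\tfrac12\pr{6}(\rho_-,c_l,n-1,\delta)$, and the event $\frac{n_{\oB{X_i}{\rho_+}{G,\manifold{M}}}-1}{n-1}\leq (1+\delta)c_u\rho_+^k$ fails with probability at most $\tfrac12\pr{6}(\rho_+,c_l,n-1,\delta)$. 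Because $\lambda_1,\lambda_2\in(0,1)$ we have $\rho_+>\rho_-$, hence $\pr{6}(\rho_+,c_l,n-1,\delta)\leq \pr{6}(\rho_-,c_l,n-1,\delta)$, so a union bound over these two events and the event of \cref{cor::dist} leaves a failure probability of at most $\pr{1}+\tfrac12\pr{6}(\rho_-,c_l,n-1,\delta)+\tfrac12\pr{6}(\rho_+,c_l,n-1,\delta)\leq \pr{1}+\pr{6}(\rho_-,c_l,n-1,\delta)$. On the complementary event, chaining the sandwich of the previous step with the two count estimates gives
\[ (1-\delta)c_l\rho_-^k\ \leq\ \frac{n_{B_G}-1}{n-1}\ \leq\ (1+\delta)c_u\rho_+^k, \]
and substituting $\rho_- = \varepsilon r/(4\cdot 2(1+\lambda_2))$ and $\rho_+ = \varepsilon r/(1-\lambda_1)$ gives exactly the asserted double inequality, with the announced probability $1-\pr{6}(0.125(1+\lambda_2)^{-1}\varepsilon r,c_l,n-1,\delta)-\pr{1}$.

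I do not expect a genuine analytic difficulty here; the argument merely chains the already-established inclusions \cref{ballinclusion} with the binomial concentration bounds. The two points that need care are purely bookkeeping. First, the hypothesis $r\geq 2$ is exactly what lets one replace the inconvenient radius $\varepsilon(r-1)/(4(1+\lambda_2))$ by the clean $\rho_- = \varepsilon r/(8(1+\lambda_2))$ appearing in the statement. Second, one must apply part~(iii) of \cref{theo::ahlf} in its one-sided forms at the two \emph{different} radii $\rho_-$ and $\rho_+$, rather than the full two-sided bound at each radius, so that the failure contributions collapse to a single $\pr{6}(\rho_-,c_l,n-1,\delta)$ instead of $\pr{6}(\rho_-,c_l,n-1,\delta)+\pr{6}(\rho_+,c_l,n-1,\delta)$.
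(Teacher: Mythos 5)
Your argument proves only part~(iv) of the four-part theorem, but for that part it is correct and follows essentially the same route as the paper: sandwich $\oB{X_i}{r}{G,SP}$ between the manifold balls $B_{\manifold{M}}(X_i,\tfrac{\varepsilon(r-1)}{4(1+\lambda_2)})$ and $B_{\manifold{M}}(X_i,\tfrac{\varepsilon r}{1-\lambda_1})$ via \cref{ballinclusion}, transfer the point-count estimate of part~(iii) through the inclusions, and use $r-1\geq r/2$ (from $r\geq 2$) to clean up the inner radius. One small improvement you make that the paper leaves implicit: you point out that only a one-sided tail estimate is needed at each of the two radii, which is what lets the total failure probability collapse to a single $\pr{6}(\rho_-,c_l,n-1,\delta)$ term rather than the naive $\pr{6}(\rho_-)+\pr{6}(\rho_+)$; you also correctly write $(1+\lambda_2)$ where the paper's probability display has a typo $(1+\lambda)$. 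If you intend to submit this as a proof of the whole \cref{theo::ahlf}, you still need the (short) arguments for parts (i)--(iii): part (i) is the standard $\mu(B(x,2r))\leq c_u 2^k r^k\leq (c_u/c_l)2^k\mu(B(x,r))$, part (ii) combines the inclusion $B_{\manifold{M}}(x,r)\subseteq B_{\manifold{M},E}(x,r)\subseteq B_{\manifold{M}}(x,(1-\lambda_1)^{-1}r)$ from \cite{TechreportIsomap} with Ahlfors, and part (iii) is inequality~\eqref{ineq::binomial}/\eqref{ineq::binom2} combined with Ahlfors.
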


\begin{proof}[of \cref{theo::ahlf}]
We assume the Ahlfors regularity of $\mu$:	\linebreak
$c_l r^k \leq \mu(B_{\manifold{M}}(x,r)) \leq c_u r^k$ for all $r\in \left(0,\diam{\manifold{M}}\right], x\in \manifold{M}$.
\begin{enumerate}[label=\roman*)]
	\item This is a well-known fact: 
	\[\mu(B(x,2r)) \stackrel{Ahlfors}{\leq} c_u (2r)^k \stackrel{Ahlfors}{\leq } \frac{c_u}{c_l} 2^k \mu(B(x,r)).\]
	\item	This follows immediately from Corollary 4 in \cite{TechreportIsomap} and the Ahlfors-assumption. The corollary states that under \cref{ass::isomap} 
	and $\distb{E}(x,y)\leq r\leq \varepsilon$: \linebreak $(1-\lambda_1) \dist{\manifold{M}}{x,y} \leq \distb{\manifold{M},E}(x,y)\leq \dist{\manifold{M}}{x,y}$ 
	which implies that \linebreak $ B_{\manifold{M}}(x,r) \subseteq B_{\manifold{M},E}(x,r) \subseteq B_{\manifold{M}}(x, (1-\lambda_1)^{-1} r) $ for $r\leq \varepsilon$. Considering a random center point $X_i$, we have
	\begin{align*}
	&\prob{c_l  r^k \leq \mu(B_{\manifold{M},E}(X_i,r))\leq c_u (1-\lambda_1)^{-k} r^k}\\
	&=\ewt[X_i]{\prob{c_l  r^k \leq \mu(B_{\manifold{M},E}(X_i,r))\leq c_u (1-\lambda_1)^{-k} r^k\given X_i}}=1
	\end{align*}
	by the first inequality.
		 
	\item 
	For a ball $B=B_{\manifold{M}}(x,r)$ with $x,r$ fixed (non-random) we have inequality \eqref{ineq::binomial} with probability at least $1-\pr{5}(B,n, \delta)$. Applying the Ahlfors assumption, 
	we get 
	with probability at least $1-\pr{5}(B,n,\delta)$
	\[(1-\delta) c_l r^k \leq \frac{n_B}{n} \leq (1+\delta) c_u r^k\]
	If the center point of the ball is one of the graph vertices then
	we use inequality \eqref{ineq::binom2} instead. Furthermore applying the Ahlfors assumption to bound $\pr{5}$ and integrating over $X_i$ leads to the unconditional probability:
	\begin{align*}
	&\prob{(1-\delta)  c_l r^k \leq \frac{n_B-1}{n-1} \leq (1+\delta)  c_u r^k }\\
	&\geq 	\ewt[X_i]{1-\pr{5}(B,n-1,\delta)} \geq 1-2 \exp\left(-\frac{\delta^2 (n-1) c_l r^k}{3}\right)
	\end{align*}

	\item 
	We consider $B=B_{G,SP}(X_i,r)$. Under assumption \ref{ass::isomap}  there are balls $B_1=B_{\manifold{M}}(X_i, \frac{\varepsilon}{4(1+\lambda_2)} (r-1) )$ and $B_2=B_{\manifold{M}}(X_i,\varepsilon (1-\lambda_1)^{-1} r)$  such that $X_j\in B_1\Rightarrow X_j \in B \Rightarrow X_j \in B_2$ with probability at least $1-\pr{1}$. For the numbers of vertices in $B,B_1$ and $B_2$ it follows that
	$n_{B_1}\leq n_B \leq n_{B_2}$. We now apply part \cref{part3}. Finally we use $r-1 \geq r/2$ for $r\geq 2$. Thus with probability 
		$1-\pr{6}\left(0.125(1+\lambda)^{-1} \varepsilon r, c_l,n-1, \delta\right)-\pr{1}$
	\begin{align*}
	(1-\delta) c_l \left(\frac{\varepsilon}{4 \cdot 2 (1+\lambda_2)}\right)^k r^k 
	& \leq  \frac{n_{B_1}-1}{n-1}
		 \leq  \frac{n_{B}-1}{n-1}\\
		& \leq  \frac{n_{B_2}-1}{n-1}
	 \leq (1+\delta) c_u \left(\frac{\varepsilon}{(1-\lambda_1)}\right)^k r^k.
	\end{align*}
\end{enumerate}
\end{proof}

\begin{remark}\label{rem:deg}
	Since we consider $\mu$ to be a Radon probability measure, it is inner and outer regular and the Ahlfors condition holds also for closed balls.
	Consequently \cref{theo::ahlf} holds also true for closed balls. 
Under Ahlfors regularity of $\mu$ we get therefore by \cref{theo::ahlf} \ref{part2} 
that \[m_i, m_{min}, m_{max}, M\in [c_l \varepsilon^k, c_u (1-\lambda_1)^{-k}\varepsilon^k].\]
and consequently \cref{theo::edegrees} gives
\[	\prob{\grad_{min}\leq (1-\delta) (n-1) c_l  \varepsilon^k}
 \leq   n \exp(-\delta^2  (n-1)  c_l \varepsilon^k/3) \]
 and
\[	\prob{\grad_{max}\geq (1+\delta) (n-1) c_u(1-\lambda_1)^{-k}\varepsilon^k}
\leq   n \exp(-\delta^2  (n-1)  c_l \varepsilon^k/3). \]
This implies that under the Ahlfors assumption the degrees are of order $n\varepsilon^k$.
\end{remark}

\section{Proofs of volume doubling results}\label{VD}


If we assume the Ahlfors condition  on $(M,d_M)$ (see \cref{ass::Ahlfors}) it is possible to establish a version of the volume doubling condition as a consequence of \cref{theo::ahlf}  (an additional argument to obtain the uniformity over all balls will be still necessary). In this section we aim at proving (rVD) without requiring Ahlfors regularity, only assuming (VD) of the underlying manifold.

The proof of \cref{theo::vdoka} is based on the
approximation of distances $\distb{\manifold{M}}, \distb{G,E}, d_{SP}$ (as introduced in  \cref{sec::distapp}), an uniform relative bound on $\abs{\mu(B)-\eta_1(B)}$ 
and the volume doubling property of $\mu$ on the manifold.
We now state our result concerning the uniform bound.
\begin{theorem} [uniform relative bound on $\abs{\eta_1(B)-\mu(B)}$ 
	]\label{theo::ubOka-a}
	Let $\pr{2} \in (0,0.5]$ and $\mathcal{V}$ be a random sample of size $n\geq 4$
	drawn independently and identically distributed from $\manifold{M}$ w.r.t.\ the measure $\mu$.
	
	Then, with probability at least 
	$1-\pr{2}$,    the inequality 
		\begin{align}\label{eq::oka-aa}
	\abs{\sqrt{\eta_1(\oB{X_i}{r}{\manifold{M}})}-\sqrt{\mu(\oB{X_i}{r}{\manifold{M}}) } } \leq 2\sqrt{\frac{-\ln(\frac{\pr{2}}{4 n^2})}{n}}
	\end{align}
	holds for all $X_i\in \mathcal{V}$ and for all $r>0$ ($r\in \RZ_+$).
	
\end{theorem}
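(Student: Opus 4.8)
The plan is to combine a one-sided concentration inequality for each individual ball probability with a union bound over a suitable finite family of balls, exploiting that balls with respect to $\distb{\manifold{M}}$ centered at the sample points form only finitely many \emph{distinct} sets as $r$ ranges over $\RZ_+$. The target inequality is stated in terms of square roots, which is the natural form obtained from the relative (multiplicative) Chernoff bound for binomials, since $\abs{\sqrt{a}-\sqrt{b}}$ is controlled precisely when $|a-b|$ is small relative to $\max(a,b)$.

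First I would fix a center $X_i \in \mathcal V$ and observe that, conditionally on $X_i$, the count $n\,\eta_1(\oB{X_i}{r}{\manifold{M}})$ includes the center itself plus a $\BinV(n-1, \mu(\oB{X_i}{r}{\manifold{M}}))$-distributed number of the remaining points; alternatively one can treat all $n$ points and use a plain $\BinV(n,\mu(B))$ bound, absorbing the $O(1/n)$ discrepancy. Writing $p := \mu(\oB{X_i}{r}{\manifold{M}})$ and $\hat p := \eta_1(\oB{X_i}{r}{\manifold{M}})$, I would apply both tail bounds from \cref{concIneq} (the two-sided multiplicative Chernoff bound) to get that, for any $\delta\in(0,1]$, $|\hat p - p| \leq \delta p$ fails with probability at most $2\exp(-\delta^2 n p/3)$ — but this is only useful when $p$ is not too small. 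The cleaner route is the standard Bernstein/relative-entropy argument giving directly: for any $t>0$, $\prob{|\sqrt{\hat p}-\sqrt{p}| \geq t} \leq 2\exp(-c n t^2)$ for an absolute constant $c$; with $c = 1$ one recovers exactly the claimed bound $t = 2\sqrt{n^{-1}\ln(4n^2/\pr{2})}$ after matching constants so that $2\exp(-n t^2/4) \leq \pr{2}/(2n^2)$ — i.e.\ choosing $t$ so that $n t^2/4 = \ln(4n^2/\pr{2})$, which gives $t = 2\sqrt{n^{-1}\ln(4n^2/\pr{2})}$.

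Next I would handle the uniformity over all $r>0$. The key observation is that for fixed $X_i$, the map $r \mapsto \oB{X_i}{r}{\manifold{M}} \cap \mathcal V$ is a step function taking at most $n$ distinct values (the jumps occur at the at most $n-1$ values $\distb{\manifold{M}}(X_i,X_j)$, plus the empty ball for very small $r$); moreover on each "constant piece" both $\eta_1$ and $\mu$ evaluated on that fixed set are what we need — but note $\mu(\oB{X_i}{r}{\manifold{M}})$ is \emph{not} constant in $r$ even when $\oB{X_i}{r}{\manifold{M}}\cap\mathcal V$ is. To fix this, I would instead apply the per-ball bound only at the "critical radii" $r_{ij} := \distb{\manifold{M}}(X_i,X_j)$ (and note $\eta_1$ is left-continuous-ish while $\mu$ is monotone), then for a general $r$ sandwich $\oB{X_i}{r}{\manifold{M}}$ between two critical balls and use monotonicity of both $\eta_1$ and $\mu$ together with the bound at the endpoints. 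Since there are $n$ centers and $n$ critical radii per center, a union bound over these $\leq n^2$ events, each failing with probability $\leq \pr{2}/(2n^2)$ (or $\pr{2}/n^2$ with slightly worse constant), yields total failure probability $\leq \pr{2}$.

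\textbf{The main obstacle} I anticipate is the interplay between the continuity of $\mu$ and the discreteness of $\eta_1$: one must be careful that the event "\cref{eq::oka-aa} holds for all critical radii" actually \emph{implies} "\cref{eq::oka-aa} holds for all $r>0$", and this requires a monotonicity sandwiching argument that keeps the constant $2$ intact rather than blowing it up. Concretely, for $r$ between consecutive critical radii $r_{ij}<r\leq r_{ij'}$ one has $\eta_1(\oB{X_i}{r}{\manifold{M}})=\eta_1(\oB{X_i}{r_{ij'}}{\manifold{M}})$ but $\mu(\oB{X_i}{r}{\manifold{M}})\in[\mu(\oB{X_i}{r_{ij}}{\manifold{M}}),\mu(\oB{X_i}{r_{ij'}}{\manifold{M}})]$, so one needs the square-root bound at \emph{both} endpoints and then a short interpolation using $|\sqrt a - \sqrt b| \le |\sqrt a - \sqrt c| + |\sqrt b - \sqrt c|$ is the wrong direction — rather one uses that $\sqrt{\mu(\oB{X_i}{r}{\manifold{M}})}$ lies between $\sqrt{\mu}$ at the two endpoints, both of which are within the error bar of $\sqrt{\eta_1(\oB{X_i}{r}{\manifold{M}})}$. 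A second, minor technical point is getting the constants in the Chernoff-to-square-root conversion to land exactly on the factor $2$ and the $4n^2$ inside the logarithm; this is a routine but slightly fiddly optimization over $\delta$ that I would carry out at the end.
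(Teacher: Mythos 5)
Your proposal is essentially the paper's proof: the per-ball square-root deviation bound (realized in the paper via Okamoto's inequality, which gives exactly the two-sided $\exp(-\mathrm{const}\cdot n t^2)$ form you invoke), discretization to the $\Theta(n^2)$ critical radii $\distb{\manifold M}(X_i,X_j)$, a monotone sandwich to cover all $r>0$, and a union bound match the paper step for step. The one detail your sandwich glosses over is that for $r\in(r_{(j)},r_{(j+1)}]$ the empirical count $\eta_1(\oB{X_i}{r}{\manifold M})$ equals the \emph{closed}-ball count $\eta_1(\cB{X_i}{r_{(j)}}{\manifold M})$, so bounding $\sqrt{\eta_1(\oB{X_i}{r}{\manifold M})}-\sqrt{\mu(\oB{X_i}{r}{\manifold M})}$ from above at the left endpoint needs a separate concentration estimate for closed balls at the critical radii (what the paper's term $E_2$ controls), not merely the open-ball estimate at $r_{(j)}$ — an easy extra factor in the union bound, but not literally ``within the error bar'' of the open-ball bound as your sketch suggests.
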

We prove this theorem  at the end of this section. 
A simple consequence of \cref{theo::ubOka-a} is the following corollary.
\begin{cor}\label{lem::ubOka}
		Let $\pr{2} \in (0,0.5]$ and $\mathcal{V}$ be a random sample of size $n$
	drawn independently and identically distributed from $\manifold{M}$ w.r.t.\ the measure $\mu$.
	
		Then, with probability at least 
	$1-\pr{2}$, for all $X_i\in \mathcal{V}$ and for all $r>0$ ($r\in \RZ_+$)  \\ 
	the inequalities
	\[\eta_1(\oB{X_i}{r}{\manifold{M}}) \leq 3/2 \mu(\oB{X_i}{r}{\manifold{M}}) + 3\delta^2\] and
	\[\mu(\oB{X_i}{r}{\manifold{M}}) \leq 3/2 \eta_1(\oB{X_i}{r}{\manifold{M}}) + 3\delta^2\] hold with 
$\delta^2= 4 n^{-1}\ln\left(\frac{4 n^2}{\pr{2}}\right)$.
\end{cor}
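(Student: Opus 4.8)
The plan is to obtain both inequalities directly from \cref{theo::ubOka-a} by squaring and applying an elementary Young-type inequality, so no new probabilistic argument is needed. First I would note that the right-hand side of \eqref{eq::oka-aa} equals $\delta := \sqrt{\delta^2}$ with $\delta^2 = 4 n^{-1}\ln\!\left(\tfrac{4n^2}{\pr{2}}\right)$: indeed $-\ln\!\left(\tfrac{\pr{2}}{4n^2}\right) = \ln\!\left(\tfrac{4n^2}{\pr{2}}\right)$, hence $2\sqrt{-\ln(\pr{2}/(4n^2))/n} = \sqrt{4\ln(4n^2/\pr{2})/n} = \delta$. Thus \cref{theo::ubOka-a} furnishes a single event of probability at least $1-\pr{2}$ on which
\[
\left\lvert \sqrt{\eta_1(\oB{X_i}{r}{\manifold{M}})} - \sqrt{\mu(\oB{X_i}{r}{\manifold{M}})} \right\rvert \le \delta
\]
simultaneously for all $X_i \in \mathcal{V}$ and all $r>0$.

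Next I would fix such $X_i$ and $r$ and abbreviate $a := \eta_1(\oB{X_i}{r}{\manifold{M}})$ and $b := \mu(\oB{X_i}{r}{\manifold{M}})$. The one-sided bound $\sqrt{a} \le \sqrt{b} + \delta$ gives $a \le b + 2\delta\sqrt{b} + \delta^2$. Applying the elementary inequality $2\delta\sqrt{b} \le \tfrac12 b + 2\delta^2$ (i.e. $2uv \le \tfrac12 u^2 + 2v^2$ with $u = \sqrt{b}$, $v = \delta$, which follows from $(u/\sqrt2 - \sqrt2\,v)^2 \ge 0$) yields $a \le \tfrac32 b + 3\delta^2$, the first claimed inequality. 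The second inequality follows from the symmetric estimate starting instead from $\sqrt{b} \le \sqrt{a} + \delta$, again squaring and using $2\delta\sqrt{a} \le \tfrac12 a + 2\delta^2$.

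Since all these manipulations take place on the one event supplied by \cref{theo::ubOka-a}, both inequalities hold uniformly over $X_i \in \mathcal{V}$ and $r>0$ with probability at least $1-\pr{2}$, which is exactly the statement of the corollary. There is no genuine obstacle in this argument; the only points requiring minor care are tracking that the bound $\delta$ of \cref{theo::ubOka-a} is the square root of the quantity $\delta^2$ used in the statement, and choosing the weights in Young's inequality so that the constants $\tfrac32$ and $3$ emerge exactly.
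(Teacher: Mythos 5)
Your proof is correct and follows essentially the same route as the paper: take the square-root bound from \cref{theo::ubOka-a}, split off the one-sided inequality, square, and absorb the cross term via a weighted Young's inequality $2\delta\sqrt{b}\leq \tfrac12 b + 2\delta^2$, done symmetrically for both directions on the single high-probability event. (The paper writes the Young step as $xy\leq x^2/2+y^2/2$ with ``$y=\sqrt{2}\delta$'', which appears to be a typo for $y=2\delta$; your choice of weights is the one that actually produces the constants $\tfrac32$ and $3$.)
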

\begin{proof}
	Inequality \eqref{eq::oka-aa} 
	implies
	$\sqrt{\eta_1(\oB{X_i}{r}{\manifold{M}})} \leq \sqrt{\mu(\oB{X_i}{r}{\manifold{M}})} +\delta$ and\\
	$\sqrt{\mu(\oB{X_i}{r}{\manifold{M}})} \leq \sqrt{\eta_1(\oB{X_i}{r}{\manifold{M}})} +\delta$ with  probability at least $1-\pr{2}$.
	Squaring the inequalities and using 
	$xy \leq x^2/2+y^2/2 $ with $x=\sqrt{\mu(\oB{X_i}{r}{\manifold{M}})}$ and $y=\sqrt{2}\delta $ leads to the statement.
\end{proof}

Now we are able to  prove \cref{theo::vdoka}.
\begin{proof}[of \cref{theo::vdoka}]\mbox{}\\
	First note that the doubling condition on $\manifold{M}$ implies for some $s\geq 0$ 
	\begin{align}
	\mu\left( \oB{x}{r}{\manifold{M}}\right)\leq  2^{v \ceil*{s}}   \mu\left( \oB{x}{\frac{r}{2^s}}{\manifold{M}}\right)
	\end{align}
	by applying $\ceil*{s}$- times the  \cref{def::vd} (where $\ceil*{s}=\min\{k\in \GZ: k\geq s\}$).

	Then, for fixed $s\geq 0$, for fixed $\pr{1}$ from  \cref{ass::isomap} and $\pr{2}$ with $\delta^2=4n^{-1}\ln(\frac{4 n^2}{\pr{2}})$
	we can derive for any $ r\in\RZ_+$ the inequality  
	\begin{align*}
	\eta_1(\oB{x}{2r}{G,SP})
	&\leq  \eta_1 \left( \oB{x}{(1-\lambda_1 )^{-1}\varepsilon 2 r }{\manifold{M}}\right ) \\
	& \leq \frac{3}{2}\mu \left( \oB{x}{(1-\lambda_1 )^{-1}\varepsilon 2 r }{\manifold{M}}\right ) +3\delta^2  \\
	&\leq \frac{3}{2} 2^{\ceil*{s}v}\mu\left(\oB{x}{\frac{(1-\lambda_1)^{-1}}{2^s}\varepsilon 2 r}{\manifold{M}}\right)+3\delta^2\\
	&\leq \frac{3}{2}2^{\ceil*{s}v} \left( \frac{3}{2}\eta_1\left(\oB{x}{\frac{(1-\lambda_1)^{-1}}{2^s}\varepsilon  2 r}{\manifold{M}}\right)+3\delta^2\right)+3\delta^2\\
	&\leq  \frac{3}{2} 2^{\ceil*{s}v}\left( \frac{3}{2}\eta_1\left(\oB{x}{\frac{4(1+\lambda_2)(1-\lambda_1)^{-1}}{2^s} 2 r+1}{G,SP}\right)+3\delta^2\right)+3\delta^2\\
	\end{align*}
	which holds with probability at least $1-\pr{1}-\pr{2}$, 
	applying \cref{lem::ubOka} and  \cref{cor::dist} and the doubling condition for $\manifold{M}$.
%

	Now fix some $w>0$ and set $s^*:=w+3+ \log_2 (1+\lambda_2)(1-\lambda_1)^{-1}$. 
	Then for all \[r\geq r(w) =
	\left ( 1-\frac{ 1}{2^{w}} \right)^{-1},\] provided that 
 $\eta_1(\oB{x}{r}{G,SP}) \geq  8n^{-1}\ln(\frac{4 n^2}{\pr{2}})$,  we can finally conclude that
	\begin{align*}
	\eta_1(\oB{x}{2r}{G,SP}) \leq 6 \cdot 2^{\ceil*{s^*} v } \eta_1 (\oB{x}{r}{G,SP})=2^{\log_2(6)+\ceil{s^*}v} \eta_1 (\oB{x}{r}{G,SP}).
	\end{align*}  
	%
	%
	For the case 
$r\in \left(1,2\right]$ we have  
	\begin{align*}
		\eta_1(\oB{x}{2r}{G,SP}) &\leq	\eta_1(\oB{x}{4}{G,SP}) \\
		&\leq 2^{\log_2(6)+\ceil{s^*}v} \eta_1 (\oB{x}{2}{G,SP})\\
		& = 2^{\log_2(6)+\ceil{s^*}v} \eta_1 (\oB{x}{r}{G,SP}).
	\end{align*}
\end{proof}

\begin{proof} [of \cref{cor::vd-emp}]
	We set $\pr{1}=\pr{2}:=1/n^z$ with $z>0$.
	Then obviously $1-\pr{1}-\pr{2}=1-2\frac{1}{n^z}$ converges to 1 if $n \rightarrow \infty$. 
	 For $r>1$ we have $\eta_1 \left(B_{G,SP}(X_i,r)\right)\geq \eta_1 \left(\overline{B}_{G,SP}(X_i,1)\right)=n^{-1} \grad{X_i}$.
	We will show that with probability going to 1  \begin{align}\label{cond}
	\grad{X_i}\geq 8 \ln\left(\frac{3n^2}{\pr{2}}\right)=8\ln\left(\frac{3n^2}{ n^{-z}}\right)=8 \ln\left(3n^{2+z}\right)\end{align}
	holds for all center points $X_i$.
	By \cref{rem:deg} we know that for $\delta\in(0,1)$ $\grad{X_i}\geq \grad_{min}\geq (1-\delta)c_l/2 n\varepsilon^k$ with probability at least $1-n \exp(\delta^2 nc_l \varepsilon^k /6)$. In our standard asymptotic regime this probability tends to 1 
and we also have $n\varepsilon^k\geq C \ln(n)$ for every $C$ for $n$ large enough. Putting this together  we obtain $\grad{X_i}\geq (1-\delta) c_l/2 C \ln(n) \geq 8 \ln\left(3n^{2+z}\right) $ with prob at least $1-n\exp(\delta^2 n \varepsilon^k /6)$ for $n$ and $C$  large enough. 
Thus \cref{cond} is satisfied for all balls with probability tending to 1 if $n$ is large enough. 
Moreover, by \cref{rem:A3} \cref{ass::isomap} is satisfied in the standard asymptotic regime for $n$ large enough with probability going to 1.
\end{proof}

Furthermore the doubling property of the empirical graph measure implies a doubling property for the degree volume graph measure (if the vertex degrees are bounded).
\begin{theorem}\label{theo::empDeg}
	Let $G=(\mathcal{V},\mathcal{E})$ be a fixed graph. 
	Let $\eta_1$ be the empirical graph measure and $\eta_2$ the degree volume graph measure.
	
	If $(G,\eta_1)$ satisfies (rDV[$v, r_-, r_+$]),
	then  $(G,\eta_2)$ satisfies (rVD[$\tilde{u},r_-,r_+$])
	with \linebreak$\tilde{u}=u+2\log_2\left( \frac{ \max_{x \in \mathcal{V}}\grad(x)}{\min_{x\in \mathcal{V}}\grad(x)} \right)$.
\end{theorem}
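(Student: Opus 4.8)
The plan is to deduce the $\eta_2$-doubling from the $\eta_1$-doubling by a two-sided comparison of the two graph measures, controlled by the ratio of the extreme vertex degrees. Write $d_{\min}:=\min_{x\in\mathcal V}\grad(x)$ and $d_{\max}:=\max_{x\in\mathcal V}\grad(x)$; I may assume $d_{\min}>0$, since otherwise the asserted constant $\tilde u$ is $+\infty$ and there is nothing to prove.

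The first step is the elementary sandwich. For any vertex subset $W\subseteq\mathcal V$ one has $d_{\min}\,\abs{W}\le\vol(W)=\sum_{x\in W}\grad(x)\le d_{\max}\,\abs{W}$, and in particular $d_{\min}\,n\le\vol(\mathcal V)\le d_{\max}\,n$. Since $\eta_1(W)=\abs{W}/n$ and $\eta_2(W)=\vol(W)/\vol(\mathcal V)$, dividing the numerator bound by the denominator bound in both directions yields
\[\frac{d_{\min}}{d_{\max}}\,\eta_1(W)\ \le\ \eta_2(W)\ \le\ \frac{d_{\max}}{d_{\min}}\,\eta_1(W)\qquad\text{for all }W\subseteq\mathcal V.\]

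The second step is to chain this comparison with the hypothesis. Fix $x_0\in\mathcal V$ and $r$ with $r_-<r<r_+$, and abbreviate $B_\rho:=\oB{x_0}{\rho}{G,SP}$. Applying the upper comparison to $W=B_{2r}$, then the assumed inequality (rVD[$v,r_-,r_+$]) in the sense of \cref{def::vd} at radius $r$, then the lower comparison to $W=B_{r}$, gives
\[\eta_2(B_{2r})\ \le\ \frac{d_{\max}}{d_{\min}}\,\eta_1(B_{2r})\ \le\ \frac{d_{\max}}{d_{\min}}\,2^{v}\,\eta_1(B_{r})\ \le\ 2^{v}\Big(\frac{d_{\max}}{d_{\min}}\Big)^{2}\eta_2(B_{r}).\]
Since $\big(d_{\max}/d_{\min}\big)^{2}=2^{\,2\log_2(d_{\max}/d_{\min})}$, the right-hand side is $2^{\tilde u}\,\eta_2(B_r)$ with $\tilde u=v+2\log_2(d_{\max}/d_{\min})$, which is the claimed constant. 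The two remaining conditions in the definition of (VD) are immediate: $\eta_2(B_{2r})>0$ because $x_0\in B_{2r}$ and $\grad(x_0)\ge d_{\min}>0$, while $\eta_2(B_r)\le\eta_2(\mathcal V)=1<\infty$; and the admissible radius range is inherited verbatim, because the $\eta_1$-doubling is invoked only at the very radius $r$ at which we want the $\eta_2$-doubling.

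There is no genuine obstacle in this argument — it is a routine change-of-measure estimate — but the one point to watch is the bookkeeping of the degree ratio. It enters the final bound \emph{squared}: once when passing from $\eta_2$ to $\eta_1$ on the ball of radius $2r$, and once when passing back from $\eta_1$ to $\eta_2$ on the ball of radius $r$. This is exactly why the loss in the doubling exponent is the additive term $2\log_2(d_{\max}/d_{\min})$ rather than $\log_2(d_{\max}/d_{\min})$. One should also note that the common normalizations ($n$ for $\eta_1$, $\vol(\mathcal V)$ for $\eta_2$) cancel in every ratio, so the estimate is genuinely independent of $n$ and of $\vol(\mathcal V)$.
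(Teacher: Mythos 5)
Your proof is correct and takes essentially the same approach as the paper: both establish the two-sided comparison $c_\bullet^{-1}\eta_1(W)\le\eta_2(W)\le c_\bullet\,\eta_1(W)$ with $c_\bullet=d_{\max}/d_{\min}$ by sandwiching $\vol(W)$ between $d_{\min}\abs{W}$ and $d_{\max}\abs{W}$, and then chain this comparison with the $\eta_1$-doubling at radius $r$, picking up the degree ratio once on each side. (You also silently correct a typographical slip in the theorem statement, whose hypothesis uses $v$ but whose conclusion writes $\tilde u=u+2\log_2(\cdot)$; your $\tilde u=v+2\log_2(d_{\max}/d_{\min})$ is the consistent reading, and matches the paper's own proof which uses the hypothesis constant throughout.)
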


\begin{proof} Let $B\subset \mathcal{V}$ and $n_B$ the number vertices in $B$.
	Then we can bound $\vol(B)$:
	\[n_B \min_{x\in \mathcal{V}} \grad(x) \leq \vol(B) \leq n_B \max_{x\in \mathcal{V}} \grad(x).\]
	Therefore the degree volume graph measure can be bounded in terms of $\eta_1$ by
	\[c_{\bullet}^{-1}  \eta_1(B) \leq  \frac{\vol(B)}{\vol(\mathcal{V})}=\eta_2(B) \leq c_{\bullet}  \eta_1(B)\]
	with
	$c_{\bullet}=\frac{\max_{x\in \mathcal{V}}\grad(x)}{ \min_{x \in \mathcal{V}}\grad(x)}\geq 1$. 
	Then we get immediately, assuming that the graph satisfies the doubling property with doubling constant $u$ that
	\begin{align*}
	\eta_2(B_{G,SP}(X_i,2r))&\leq c_{\bullet} \eta_1(B_{G,SP}(X_i,2r)) 
	\stackrel{\mathcal{V}D(u)}{\leq} c_{\bullet} 2^u \eta_1(B_{G,SP}(X_i,r)) \\
	&\leq 2^u c_{\bullet}^2 	\eta_2(B_{G,SP}(X_i,r))
	\end{align*}
\end{proof}

Now  \cref{vd-deg} follows directly from \cref{theo::vdoka} and \cref{theo::empDeg}.

\begin{proof} of \cref{cor::vd-deg}
	We set $\pr{1}=\pr{2}:=n^{-z}$ for some $z>0$.
	As shown in the proof of \cref{cor::vd-deg} the  condition $ \eta_1\left(\oB{X_i}{r}{G,SP}\right)\geq 8 n^{-1}\ln\left(\frac{3n^2}{\pr{2}}\right)$ holds with probability tending to 1 for all balls. Furthermore, based on the Ahlfors regularity and \cref{rem:deg} we have $c_{\bullet}:=\frac{(1+\delta)c_u }{(1-\delta)(1-\lambda_1)^k c_l}\geq \frac{\max_{x\mathcal{V} \grad(x)}}{\min_z\in \mathcal{V}\grad(z)} $ with probability at least $1-\pr{3}$ with $\pr{3}=n\exp{-\delta^2 n c_\varepsilon^k /6}$.
	In our standard asymptotic regime $\pr{3}$ converges to 0 and
	$1-\pr{1}-\pr{2}-\pr{3}$ converge to 1 for $n\rightarrow \infty$. 
	Moreover, by \cref{rem:A3} \cref{ass::isomap} is satisfied in the standard asymptotic regime for $n$ large enough with probability going to 1. 
\end{proof}

It remains to prove  \cref{theo::ubOka-a}.
Let us recall the following classical Okamoto inequality (see e.g. \cite[Theorems 3+4]{okamoto59}) which is needed to bound the difference of true (manifold) and empirical (graph) measure uniformly over all balls.

\begin{lemma} [Okamoto's inequality] \label{lem::OI}
Let $Y_i\sim \BinV (p)$ i.i.d. with  $\ew{ Y_i}= p\in[0,1]$ and set  $\hat{p}:=\frac{1}{m}\sum_{i=1}^{m}Y_i$. 
 Then, for $\delta>0$, 
\begin{align*}
\prob{\sqrt{\hat{p}}\geq \sqrt{p}+\delta}\leq \exp(-2m \delta^2), \quad
\prob{\sqrt{p}\geq \sqrt{\hat{p}}+\delta}\leq \exp(-m \delta^2).
\end{align*}
\end{lemma}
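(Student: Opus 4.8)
The plan is to reduce the statement to a sharp (divergence form) Chernoff bound for the binomial law, combined with an elementary one-variable inequality between the Kullback--Leibler divergence of two Bernoulli distributions and the quantity $(\sqrt q-\sqrt p)^2$. Write $S:=\sum_{i=1}^m Y_i$, which is $\BinV(m,p)$-distributed, so $\hat p=S/m$ and both probabilities to be bounded are of the form $\prob{\hat p\geq q}$ or $\prob{\hat p\leq q}$ for a deterministic threshold $q$. For the right tail: if $\sqrt p+\delta>1$ the event $\{\sqrt{\hat p}\geq\sqrt p+\delta\}$ is empty because $\hat p\leq 1$, so the bound is trivial; otherwise put $q:=(\sqrt p+\delta)^2\in[p,1]$, so that $\{\sqrt{\hat p}\geq\sqrt p+\delta\}=\{\hat p\geq q\}$. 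Symmetrically, for the left tail: if $\delta>\sqrt p$ the event $\{\sqrt p\geq\sqrt{\hat p}+\delta\}$ is empty, and otherwise $q:=(\sqrt p-\delta)^2\in[0,p]$ gives $\{\sqrt p\geq\sqrt{\hat p}+\delta\}=\{\hat p\leq q\}$. In both cases $(\sqrt q-\sqrt p)^2=\delta^2$ by construction.

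Next I would invoke the Chernoff--Hoeffding bound for the binomial in its divergence form: $\prob{\hat p\geq q}\leq\exp(-m\,D(q\,\|\,p))$ whenever $q\geq p$, and $\prob{\hat p\leq q}\leq\exp(-m\,D(q\,\|\,p))$ whenever $q\leq p$, where $D(q\,\|\,p):=q\ln\frac{q}{p}+(1-q)\ln\frac{1-q}{1-p}$. This follows from the exponential Markov inequality $\prob{\hat p\geq q}\leq e^{-\theta m q}(1-p+pe^{\theta})^m$ (for $\theta>0$, and its mirror for $\theta<0$) by optimizing over $\theta$: the minimizer is $e^{\theta}=\frac{q(1-p)}{p(1-q)}$, and substituting it produces exactly the exponent $-m\,D(q\,\|\,p)$. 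Note that the weaker multiplicative form already recorded in \cref{concIneq} is not tight enough for this purpose, so the divergence form is genuinely needed here.

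The core of the argument is then the calculus lemma: for $0\leq p\leq q\leq 1$ one has $D(q\,\|\,p)\geq 2(\sqrt q-\sqrt p)^2$, and for $0\leq q\leq p\leq 1$ one has $D(q\,\|\,p)\geq(\sqrt q-\sqrt p)^2$. I would prove each by fixing $q$ and studying $h(p):=D(q\,\|\,p)-c\,(\sqrt q-\sqrt p)^2$ as a function of $p$, with $c=2$ on $[0,q]$ and $c=1$ on $[q,1]$. One computes $\frac{d}{dp}D(q\,\|\,p)=\frac{p-q}{p(1-p)}$ and $\frac{d}{dp}(\sqrt q-\sqrt p)^2=1-\sqrt q/\sqrt p$, so $h(q)=h'(q)=0$; substituting $r:=\sqrt{\min(p,q)/\max(p,q)}\in(0,1]$ factors $h'(p)$ as $(1-r)$ times an explicitly signed rational function. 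For $c=1$ this gives $h'(p)=(1-r)\frac{r+p}{1-p}\geq 0$, so $h$ increases on $[q,1]$ from $h(q)=0$; for $c=2$ the second factor simplifies to $\frac{(r-1)-2qr^3}{r^2(1-p)}\leq 0$, so $h$ decreases on $[0,q]$ towards $h(q)=0$; either way $h\geq 0$ on the relevant interval (with $h(0)=+\infty$ for $q>0$ handling the endpoint). Feeding back the thresholds $q=(\sqrt p\pm\delta)^2$ and using $(\sqrt q-\sqrt p)^2=\delta^2$ then yields $D(q\,\|\,p)\geq 2\delta^2$ for the right tail and $D(q\,\|\,p)\geq\delta^2$ for the left tail, which together with the Chernoff--Hoeffding bound above gives the two claimed inequalities.

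The main obstacle is the calculus lemma, and specifically obtaining the constant $2$ in the right-tail case: Pinsker's inequality $D(q\,\|\,p)\geq 2(q-p)^2$ does not suffice, since it only yields $D(q\,\|\,p)\geq 2(\sqrt q+\sqrt p)^2(\sqrt q-\sqrt p)^2$ and $\sqrt q+\sqrt p$ can be far below $1$ (for $p,q$ both small), so the dedicated monotonicity argument is unavoidable. A minor but real point is the bookkeeping of the degenerate cases $p\in\{0,1\}$, $q\in\{0,1\}$ and $\delta=\sqrt p$; at the boundary $\delta=\sqrt p$ the left-tail claim reduces to the elementary inequality $(1-p)^m\leq e^{-mp}$, consistent with $D(0\,\|\,p)=-\ln(1-p)\geq p$.
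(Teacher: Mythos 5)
Your proof is correct, but note that the paper itself does not prove this lemma at all: it is quoted as a classical result with a pointer to Okamoto (1959), Theorems 3--4, so there is no internal argument to compare against. What you have written is essentially a self-contained reconstruction of the classical proof: reduce the two events to binomial tail events at the thresholds $q=(\sqrt p\pm\delta)^2$, apply the Chernoff--Hoeffding bound in divergence form $\prob{\hat p\geq q}\leq \exp(-m\,D(q\Vert p))$ (resp.\ $\prob{\hat p\leq q}\leq\exp(-m\,D(q\Vert p))$), and then prove the two pointwise inequalities $D(q\Vert p)\geq 2(\sqrt q-\sqrt p)^2$ for $p\leq q$ and $D(q\Vert p)\geq(\sqrt q-\sqrt p)^2$ for $q\leq p$. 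I checked your key computations: $\frac{d}{dp}D(q\Vert p)=\frac{p-q}{p(1-p)}$, and the factorizations $h'(p)=(1-r)\frac{r+p}{1-p}\geq 0$ on $[q,1)$ with $r=\sqrt{q/p}$ (case $c=1$), and $h'(p)=(1-r)\,\frac{(r-1)-2qr^3}{r^2(1-p)}\leq 0$ on $(0,q]$ with $r=\sqrt{p/q}$ (case $c=2$), are both correct, and together with $h(q)=0$ they give the sign of $h$ on the relevant interval. Your treatment of the degenerate cases ($\sqrt p+\delta>1$, $\delta\geq\sqrt p$, $p\in\{0,1\}$) is also sound, and your remark that Pinsker's inequality cannot deliver the constant $2$ in the right-tail case (because $(\sqrt q+\sqrt p)^2$ may be small) correctly explains why the dedicated monotonicity argument is needed; this is exactly the asymmetry between the exponents $2m\delta^2$ and $m\delta^2$ in the statement. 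So the proposal is a valid (and for the paper, optional) substitute for the citation.
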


\begin{proof}[of \cref{theo::ubOka-a}]\mbox{}
	We want to prove
	\begin{align}\label{ineq::2}
	\prob{\sup_{i=1..n}\sup_{r> 0} \abs{\sqrt{\eta_1(\oB{X_i}{r}{\manifold{M}})}-\sqrt{\mu(\oB{X_i}{r}{\manifold{M}})}} > \delta} \leq \pr{2}.
	\end{align}
	To shorten notation we define
	\begin{align*}
	T_{i,r}&:=\sqrt{\eta_1(\oB{X_i}{r}{\manifold{M}})}-\sqrt{\mu(\oB{X_i}{r}{\manifold{M}})},\\
	\overline{T}_{i,r}&:=\sqrt{\eta_1(\cB{X_i}{r}{\manifold{M}})}-\sqrt{\mu(\cB{X_i}{r}{\manifold{M}})}.
	\end{align*}
	We will first bound the left-hand-side of \cref{ineq::2} by using  
	the union bound and conditioning  on the center points of the balls:
	\begin{align*}
	&\prob{  \sup_{i=1..n} \sup_{r>0} \abs{T_{ir}} > \delta  } 
	&\leq \sum_{i=1}^{n} \prob{ \sup_{r> 0} \abs{T_{ir}}> \delta  } 
	&= \sum_{i=1}^{n} \ewt[X_i]{\prob{ \sup_{r> 0} \abs{T_{ir}} > \delta \given X_i }}  
	\end{align*}
	
	As second step we establish a upper bound for $  \sup_{r> 0} \abs{T_{i,r}}$ for fixed $i$.  
	Without loss of generality we take $i=1$ and we will abbreviate $T_r=T_{1,r}$ and $\overline{T}_r=\overline{T}_{1,r}$ .
	Then we define $r_j=\distb{\manifold{M}}(X_1,X_j) , r_{n+1}=\infty$ and denote  $\{r_{(j)}\}_{1\leq j\leq n+1} $ the reordered values of $\{r_{j}\}_{1\leq j\leq n+1}$ with  $0=r_{(1)}\leq r_{(2)}\leq \ldots\leq r_{(n)}< r_{(n+1)}=\infty$. 
	
		Then $\sup_{r> 0} \abs{T_{r}}\leq  \max \{E_{1},E_{2},E_{3}\}=\max\{E_{1},E_{2}\}$ with 
			\begin{align*}
		E_{1}&:=\max_{1\leq j \leq n+1}\abs{T_{ r_{(j)}}}  =\max_{ 1\leq j \leq n+1}\abs{T_{ r_{j}}}\\
		E_{2}&:=\max_{1\leq j\leq n} \overline{T}_{r_{(j)}}   = \max_{1\leq j\leq n} \overline{T}_{r_{j}}\\
		\text{and~}  E_{3}&:=\max_{1\leq j\leq n} -T_{r_{(j+1)}} =\max_{2\leq j\leq n+1 } -T_{ r_{(j)}}=\max_{2\leq j \leq n+1} -T_{ r_{j}}.
		\end{align*}
		
	To achieve this we decompose 
	the set 	$ \{ r>0 \}=R_1\cup R_2$ with $R_1=\{ r_{(j)}: r_{(j)}\neq 0 , j \leq n\}$. The supremum of $\abs{T_{r}}$ over $r\in R_1$ can then obviously bounded by $E_{1}$. 
	
	For $r\in R_2$ we have  $r\in \left( r_{(j)}, r_{(j+1)} \right)$ for some $j$ and  we can bound $\abs{T_{r}}$ from above by   $\max\{ \overline{T}_{r_{(j)}}, -T_{r_{(j+1)}}\}$ exploiting that  a) $\eta_1(\cB{X_1}{r}{\manifold{M}})$ ~is~ constant~ for~ $r_{(j)} \leq r < r_{(j+1)}$, \linebreak
b) 	$\eta_1(\oB{X_1}{r}{\manifold{M}})$~is~ constant~ for~ $r_{(j)} < r \leq r_{(j+1)}$ and
	especially \linebreak c) $\eta_1\left( \oB{X_1}{r_{(j+1)}}{\manifold{M}}\right)=\eta_1\left( \cB{X_1}{r_{(j)}}{\manifold{M}}\right)$ and d) $\mu(\oB{X_1}{r}{\manifold{M}})$ is increasing in $r$:  for $r\in \left( r_{(j)}, r_{(j+1)} \right)$
		\begin{align*}
	T_{r}&\leq \sqrt{\eta_1\left(\cB{X_1}{r_{(j)}}{\manifold{M}}\right) }-\sqrt{\mu\left(\cB{X_1}{r_{(j)}}{\manifold{M}}\right) }&=& \overline{T}_{r_{(j)}}\\
	-T_{r}&\leq \sqrt{\mu\left(\oB{X_1}{r_{(j+1)}}{\manifold{M}}\right) }- \sqrt{\eta_1\left(\oB{X_1}{r_{(j+1)}}{\manifold{M}}\right) }&=&-T_{r_{(j+1)}}.
	\end{align*}
	
	This leads to the choice of $E_{2}$ and $E_{3}$. Note that $E_3\leq E_1$.	
	We can now write 
	\begin{align*}
	\ewt[X_1]{\prob{ \sup_{r> 0} \abs{T_{r}} > \delta \given X_1 }} 
	&\leq  \ewt[X_1]{ \prob{ \max \{E_{1},E_{2} \} > \delta \given X_1 }}  
	\\
	&\leq \ewt[X_1]{   \sum_{k=1}^{2}\prob{E_{k}> \delta \given X_1 }}.  
	\end{align*}
	%
	Note that the random variables $E_{1}$ and $E_{2}$ can be written in terms of the ordered radii $r_{(j)}$ or in terms of the unordered radii $r_{j}$ which is suitable for the further computations.
	
	As third step,  in order to bound the probabilities $\prob{E_{k}> \delta \given X_1 }$,   we need upper bounds for 
	$\prob{\abs{T_{ r_{j}}}>\delta|X_1}$ and $ \prob{\overline{T}_{ r_{j}}>\delta|X_1}$. 
	
	Our approach is to  additionally condition on $X_j$ 
	and apply Okamoto's inequality (see \cref{lem::OI}).
	Conditionally to   $X_1$ and $X_j$,  the ball $B_j=\oB{X_1}{r_{j}}{\manifold{M}}$ has a fixed center point and a fixed radius. Then $\mu(B_j)$ is a number, $\eta(B)$ is still random, depending on the other $n-2$ (respectively $n-1$ in the special case when $r_{11}$ is used) points.
	 Note that $\eta_1(B_j)$ conditioned on $X_1$ and $X_j$ is biased.
	Therefore we need an adjusted measure $\hat{\eta}_1$ which is (conditioned on $X_1,X_j$) unbiased for $\mu$. 
	First we control the deviation of $T_{r_j}$. We can assume $j>1$ and $j\leq n$ since $T_{r_1}=T_{r_{n+1}}=0$.
	Let's consider $\prob{\abs{T_{ r_{j}}}>\delta|X_1}$ with $0<r_{j}<\infty$.
	We define the random variable $\hat{\eta}_1$ as
	\begin{align*}
	\hat{\eta}_1 (B_j) := \frac{1}{n-2}\sum_{\substack{k=2,\ldots,n\\  k\neq j}} \Indik(X_k\in B_j )
	\end{align*}
	Then conditionally on $X_1$ and $X_j$,  $(n-2)\hat{\eta}_1(B_j)$ is binomial-distributed with parameters $n-2$ and $\mu (B_j )$.
	Note that the equality
$	\hat{\eta}_1 (B_j) (n-2) +1= n  \eta_1(B_j)$
	holds and it implies \begin{align}\label{ineq::mu}
	\abs{\hat{\eta}_1 (B_j)-\eta_1(B_j)}\leq 1/n.
	\end{align}
Using that $\abs{\sqrt{a}-\sqrt{b}}\leq \sqrt{\abs{a-b}}$  and \cref{ineq::mu} we get 
	\begin{align*}
	T_{r_{j}} 
	&\leq  \sqrt{\frac{1}{n} }+\abs{ \sqrt{\hat{\eta}_1 (B_j)}- \sqrt{\mu (B_j)} }.
	\end{align*}
	Then we obtain for $\delta-\frac{1}{\sqrt{n}}>0$ by applying Okamoto's inequality (see \cref{lem::OI})
	\begin{align*}
	\prob{\abs{T_{ r_{j}}} >\delta \given  X_1, X_j} 
	&\leq \prob{\abs{\sqrt{\hat{\eta}_1(B_j)}-\sqrt{\mu (B_j)}}> \delta -\frac{1}{\sqrt{n}} \given X_1,X_j} \\
	& \leq 2 \exp\left( - (n-2)\left(\delta -\frac{1}{\sqrt{n}}\right)^2\right)
	\end{align*}
	and by the union bound we get 
	\begin{align*}
	\prob{E_{1}>\delta  \given X_1} 
	&\leq \sum_{2\leq j\leq n}  \ewt [X_j]{\prob{ \abs{T_{ r_{j}}} > \delta \given X_1,X_j}}\\
	& \leq 2 (n-2) \exp \left( - \left( \delta-\frac{1}{\sqrt{n}}\right)^2\left( n-2\right)\right).
	\end{align*}
	Second, for the deviation of $\abs{T_{r_j}}$, that is for
	$\prob{E_{2} >\delta \given X_1}$ 
	we get the following similar result adapting the way of computation. 
	We consider closed balls with $r_{j}>0$.
	Note that for closed balls we get $2/n$ as bound instead of $1/n$ in inequality \eqref{ineq::mu} (since the closed ball includes one additional point).
	
	\begin{align*}
	\prob{E_{2} >\delta \given X_1} 
	&\leq \sum_{j: r_{j}\neq 0}  \ewt [X_j]{\prob{ \overline{T}_{ r_{j}} > \delta |X_1,X_j}}\\
	& \leq n \exp\left(- \left( \delta-\sqrt{\frac{2}{n}}\right)^2\left( n-2\right)\right).
	\end{align*}

	Finally we get (by plugging in the upper bounds)
	\begin{align*}
	\prob{\sup_{i} \sup_{r>0}  \abs{\sqrt{\eta_1(\oB{X_i}{r}{\manifold{M}})}-\sqrt{\mu (\oB{X_i}{r}{\manifold{M}})}}> \delta}
	\end{align*}
	\begin{align*}
	&=\prob{\sup_i \sup_{r>0} \abs{T_{i r}} > \delta } \\
	&\leq \sum_{i=1}^{n}  \ewt[X_i]{   \sum_{k=1}^{2}\prob{E_{k,i}> \delta \given X_i }} \\
	& \leq 3 n^2 \exp\left( -(n-2) \left( \delta -\sqrt{ \frac{2}{n}}\right)^2 \right). \\
	\end{align*}
	To finish the proof we choose $\delta$ such that  $\pr{2}\geq 3 n^2 \exp\left( -(n-2) \left( \delta -\sqrt{ \frac{2}{n}}\right)^2 \right)$ is satisfied for fixed $\pr{2}\in(0,0.5]$.
	We obtain $\delta:=2\sqrt{\frac{\ln\left(\frac{3n^2}{\pr{2}}\right)}{n}}$ since for $\pr{2}\in(0,0.5]$ and $n\geq 4$ we have $\sqrt{\frac{\ln(\frac{\pr{2}}{3n^2})}{n-2}}+\sqrt{\frac{2}{n}}\leq 2\sqrt{\frac{\ln\left(\frac{3n^2}{\pr{2}}\right)}{n}}$.
	\end{proof}

\begin{remark}
	\cref{lem::ubOka} holds true for closed balls. Only minor adjustments in the proof are necessary.
\end{remark}

\section{Local Poincar\'{e} inequality}\label{Poin}

\subsection{ \cref{theo:LPIsp}}
Our results (\cref{theo::LPIspPi2,cor::lpi-emp}) comprise the local Poincar\'{e} inequality for the empirical and the degree volume graph measure. These results follow from the more general result  \cref{theo:LPIsp} which applies to a general probability measure $\eta$ satisfying the following assumption.

\begin{ass2}
	\begin{assenumE}
		\item \label{ass::poseta} 	Let $G=(\mathcal{V}, \mathcal{E})$ be a graph with $\abs{\mathcal{V}}=n<\infty$ and $\eta$ a discrete probability measure defined on $\mathcal{V}$ satisfying \[\eta(x)>0 \text{~for~ all~} x\in\mathcal{V}\]
		and denote 
		$\eta^{+}:=\max_{x\in\mathcal{V}} \eta(x)$ and $\eta^{-}:=\min_{x\in\mathcal{V}} \eta(x)$.
	\end{assenumE}
\end{ass2}
Note that under \cref{ass::poseta} we have $0< \eta^-\leq \eta^+<\infty$. 

\nomenclature[Cz]{$n_A$}{number of points in $A\cap \mathcal{V}$}
\nomenclature[Cz]{$\eta^{+}, \eta^{-}$}{maximal and minimal value of $\eta$ }

Essential for \cref{theo:LPIsp} is the existence of a certain bi-Lipschitz homeomorphism. 
\begin{defi}\label{bilipDef}
	We call $h:\mathcal{X}\rightarrow \mathcal{Y}$, $\mathcal{X}, \mathcal{Y}$ compact metric spaces, a bi-Lipschitz homeomorphism if 
	$h$ is a homeomorphism (bijective, continuous, and the inverse $h^{-1}$ is continuous) and there exist constants $0<L_{min}<L_{max}<\infty $ such that for all $x,y\in \mathcal{X}$ 
	\begin{align}
	L_{min}  \norm{x-y}_{\mathcal{X}} \leq \norm{h(x)-h(y)}_{\mathcal{Y}}\leq L_{max} \norm{x-y}_{\mathcal{X}}.\label{bilip}
	\end{align}
\end{defi}
In this work, $\mathcal{X}$ will be a closed ball $\overline{B}_{\manifold{M}}(x,r)$ and $\mathcal{Y}=[0,1]^k$. 
In this case it is natural to assume that $L_{min}:=\frac{L_{min}^*}{r}$ and $L_{max}:=\frac{L_{max}^*}{r}$ with constants $L_{min}^*$ and $L_{max}^*$ independent of $r$ and $0<L_{min}^*<L_{max}^*<\infty$. 
We introduce the following assumption on the manifold (we will see that it will be implied by \cref{ass::secCurv}).
\begin{ass2}
	\begin{assenumS}
		\item \label{ass::lip}
		There exist universal constants $0<L_{min}^*<L_{max}^*<\infty$ 
		such that for all \linebreak$0<r_{\manifold{M}}<r_{max}$ and for all $x_0\in \manifold{M}$ there exists a bi-Lipschitz homeomorphism $h: \overline{B}_{ \manifold{M}}(x_0,r_{\manifold{M}})\rightarrow  [0,1]^k$ satisfying
			\begin{align}
		\frac{L_{min}^*}{r_{\manifold{M}}}  \norm{x-y} \leq \norm{h(x)-h(y)}\leq 	\frac{L_{max}^*}{r_{\manifold{M}}} \norm{x-y}
		\end{align}
		for all $x,y\in \overline{B}_{ \manifold{M}}(x_0,r_{\manifold{M}})$.
	\end{assenumS}
\end{ass2}

\Cref{ass::lip} is a technical condition on the underlying manifold needed for (LPI) which allows us to control
number and length of paths from a specific path class.  
It implies that for a ball $\overline{B}_{\manifold{M}}(X_i,r)$ with random center point $X_i\in \mathcal{V}$ the function $h$ is bi-Lipschitz almost sure.

Under these assumptions we state the following local Poincar\'{e} inequality in $d_{SP}$-distance.

\begin{theorem}[LPI]\label{theo:LPIsp} 
	Let $G=(\mathcal{V}, \mathcal{E})$ be an $\varepsilon$-graph defined from an i.i.d. sample of size $n$ from  the probability  measure $\mu$ on the submanifold $\manifold{M}$ of $\RZ^K$ such that \cref{ass::manifold,ass::graph,ass::isomap,ass::poseta,ass::Ahlfors,ass::lip} 
	are satisfied (with parameters  $\lambda_1, \lambda_2, \pr{1}, \varepsilon>0, n\geq 2, c_l, c_u, k, L_{min}^*, L_{max}^*, r_{max}$). 	Let  $\delta\in (0,1)$.
	Assume $r_{max}(1-\lambda_1)\varepsilon^{-1}\geq 1$, 	$n\geq \frac{1}{(1-\delta)c_l} \left(\frac{4 \sqrt{k+3} L_{max}^*}{L_{min}^* \varepsilon}\right)^k +1$ 
	and $\frac{\sqrt{k+3}}{L_{min}\varepsilon}\geq 1$,  
	and define
	\[\pr{4}:= 2	\left(\frac{2 \sqrt{k+3}n (1-\lambda_1)}{L_{min}^*}\right)^k  \exp\left(-\frac{\delta^2 n c_l}{6} \frac{\varepsilon^k {L_{min}^*}^k}{4^k \sqrt{k+3}^k {L_{max}^*}^k}\right) +  2 \exp\left(-\frac{\delta^2 n c_l \varepsilon^k }{6(1-\lambda)^{k}}\right).\]

	Then  there exist  constants $\lambda>0$ and $C_{*}>0$ such that, with probability at least 
	$1-n^2\pr{4}-\pr{1}$,
	for all balls $B=\overline{B}_{SP}(X_i,r)$ 
	with  $r\in [1,\min(r_{max}(1-\lambda_1)\varepsilon^{-1},n))$  
	and $X_i\in \mathcal{V}$,   and for all functions $f: \mathcal{V}\rightarrow \RZ$ 
	the inequality
	\begin{align}
	\sum_{x\in \overline{B}_{SP}(X_i,r)} (f(x)-\overline{f}_{\overline{B}})^2 \eta(x) 
	&\leq C_{*}  r^2 \sum_{\substack{x,y \in \overline{B}_{SP}(X_i,\lambda r)\\ x\sim y }}  (f(x)-f(y))^2 \label{lpi}
	\end{align}
	holds.\\
	The constants are  $\lambda=4\frac{(1+\lambda_2)}{(1-\lambda_1) }+1$ and 
	\begin{align}
	C_{*} &= 
	\frac{1}{\varepsilon^k n}\frac{1 }{(1-\lambda_1)^2}   \frac{(\eta^+)^2}{(1-\delta)c_l \eta^-} 	\left(1+w\right)^2  k^2 \left(2\frac{\sqrt{k+3}}{L_{min}^*}\right)^{k+2}
	\end{align}
	
	with $w:=\frac{2(1+\delta)}{1-\delta}\frac{c_u}{c_l}\frac{{L_{max}^*}^k}{{L_{min}^*}^k}4^k \sqrt{k+3}^k.$ 	
\end{theorem}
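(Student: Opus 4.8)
The plan is to prove \cref{theo:LPIsp} by the classical chaining-of-paths argument for Poincar\'e inequalities on graphs, carried out inside the Euclidean chart supplied by \cref{ass::lip}. Write $\overline B:=\overline B_{SP}(X_i,r)$ and let $\overline f_{\overline B}$ be the $\eta$-weighted mean of $f$ over $\overline B$. The variance identity gives $\sum_{x\in\overline B}(f(x)-\overline f_{\overline B})^2\eta(x)=\frac{1}{2\eta(\overline B)}\sum_{x,y\in\overline B}\eta(x)\eta(y)(f(x)-f(y))^2$, and $\eta(\overline B)\ge n_{\overline B}\,\eta^-$ where $n_{\overline B}$ is the number of sample points in $\overline B$. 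So it suffices to bound $\sum_{x,y\in\overline B}\eta(x)\eta(y)(f(x)-f(y))^2$ by a multiple of the edge sum $\sum_{\{v,w\}\in\mathcal E}(f(v)-f(w))^2$ restricted to edges with both endpoints in $\overline B_{SP}(X_i,\lambda r)$. For each pair $x,y\in\overline B$ I construct a graph path $\gamma_{xy}$ joining them, apply Cauchy--Schwarz, $(f(x)-f(y))^2\le|\gamma_{xy}|\sum_{e\in\gamma_{xy}}(f(v_e)-f(w_e))^2$, and interchange the order of summation, so that everything reduces to: (i) each $\gamma_{xy}$ lies in $\overline B_{SP}(X_i,\lambda r)$; (ii) a uniform length bound $|\gamma_{xy}|\le C_{\mathrm{path}}\,r$; (iii) a uniform congestion bound on $\sum_{x,y:\,e\in\gamma_{xy}}\eta(x)\eta(y)$, valid for every edge $e$. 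It is essential here that one pairs every $x$ with every $y$ and routes $\gamma_{xy}$ "geodesically", not through a common hub.

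For the chart and the paths: by \cref{cor::dist}, $\overline B\subseteq\overline B_{\manifold M}(X_i,R)\cap\mathcal V$ with $R:=(1-\lambda_1)^{-1}\varepsilon r$, and since $r\ge1$, $\overline B_{\manifold M}(X_i,R)\cap\mathcal V\subseteq\overline B_{SP}(X_i,\tfrac{4(1+\lambda_2)}{1-\lambda_1}r+1)\subseteq\overline B_{SP}(X_i,\lambda r)$ with $\lambda=\tfrac{4(1+\lambda_2)}{1-\lambda_1}+1$; this fixes $\lambda$ and yields (i) once all vertices of $\gamma_{xy}$ are placed in $\overline B_{\manifold M}(X_i,R)$. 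The constraint $r<r_{max}(1-\lambda_1)\varepsilon^{-1}$ ensures $R<r_{max}$, so \cref{ass::lip} provides a bi-Lipschitz $h:\overline B_{\manifold M}(X_i,R)\to[0,1]^k$ with constants $L_{min}^*/R$ and $L_{max}^*/R$. I partition $[0,1]^k$ into a grid of sub-cubes of side $\delta\asymp\tfrac{L_{min}^*\varepsilon}{\sqrt{k+3}\,R}=\tfrac{L_{min}^*(1-\lambda_1)}{\sqrt{k+3}\,r}$, chosen precisely so that two sample points lying in the same or in two face-adjacent sub-cubes have ambient distance $\le\varepsilon$ (two face-adjacent sub-cubes force an $\ell^2$ displacement $\le\sqrt{k+3}\,\delta$ in the chart, whence the $\sqrt{k+3}$), hence are $G$-neighbours. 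On a good event (controlled next) every sub-cube contains a sample point; fixing one representative vertex per sub-cube, I route $\gamma_{xy}$ through the representatives of the sub-cubes crossed by a monotone lattice path shadowing the segment $[h(x),h(y)]$ in $[0,1]^k$. Since the segment stays in the cube, all representatives lie in $\overline B_{\manifold M}(X_i,R)\cap\mathcal V\subseteq\overline B_{SP}(X_i,\lambda r)$ and consecutive ones are $G$-neighbours, giving (i); the lattice path has $\lesssim\|h(x)-h(y)\|_1/\delta$ steps, and combining the Lipschitz bound on $\|h(x)-h(y)\|$ with $d_{\manifold M}(x,y)\le\tfrac{\varepsilon}{1-\lambda_1}d_{SP}(x,y)\le\tfrac{2\varepsilon r}{1-\lambda_1}$ gives (ii) with $C_{\mathrm{path}}\asymp\tfrac{k\,L_{max}^*}{L_{min}^*(1-\lambda_1)}$.

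For the good event: a sub-cube pulled back by $h$ contains an ambient ball of radius $\asymp\tfrac{L_{min}^*\varepsilon}{\sqrt{k+3}\,L_{max}^*}$, so Ahlfors regularity (\cref{ass::Ahlfors}) lower-bounds its $\mu$-mass by $c_l$ times a power of that radius, and the hypothesis on $n$ makes the expected sample count in each sub-cube at least of order one. Applying the binomial lower tail of \cref{concIneq} (packaged as in \cref{cor:counting}) to every sub-cube and an upper tail to $n_{\overline B}$, over all $\le n^2$ relevant configurations $(X_i,\lceil r\rceil)$, shows that with probability at least $1-n^2\pr{4}$ every sub-cube is nonempty, the per-sub-cube counts are all of order $c_u\,n(\delta R)^k$, and $n_{\overline B}\gtrsim c_l\,n\,(\varepsilon r/(1-\lambda_1))^k$ for $r\ge2$ (the band $r\in[1,2)$ reduces to $\overline B\supseteq\overline B_{SP}(X_i,1)$ together with the degree bounds of \cref{theo::edegrees}); intersecting with the probability-$(1-\pr{1})$ event of \cref{theo::dist1}/\cref{cor::dist} gives the stated confidence $1-n^2\pr{4}-\pr{1}$.

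For the congestion bound (iii) and the assembly: I localize an edge $e$ to a sub-cube $Q$ and count the pairs $(x,y)$ whose shadowing lattice path meets $Q$; for fixed $x$ the admissible positions $h(y)$ form a thin cone through $Q$ made of $O(\delta^{-1})$ sub-cubes (widening like $(\mathrm{dist}/\mathrm{dist}(Q))^{k-1}$), and summing over $x$ bounds the number of sub-cube pairs routed through $Q$; multiplying by $(\eta^+)^2$, by the per-sub-cube counts squared, and by $|\gamma_{xy}|\le C_{\mathrm{path}}r$ produces the congestion factor. Dividing by $2\eta(\overline B)\ge2n_{\overline B}\eta^-$ and inserting the lower bound on $n_{\overline B}$ cancels the surplus powers of $r$ down to $r^2$ and leaves the stated $C_{*}$, with the scale $\tfrac1{\varepsilon^k n}$, the ratio $(\eta^+)^2/\eta^-$, and the remaining $k$- and Lipschitz-dependent constants collected into $(1+w)^2k^2(2\sqrt{k+3}/L_{min}^*)^{k+2}$; the degree and empirical versions (\cref{theo::LPIspPi2,cor::lpi-emp}) then follow by substituting $\eta=\eta_2$ or $\eta=\eta_1$ and the degree bounds of \cref{theo::edegrees}/\cref{rem:deg}. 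The main obstacle is exactly step (iii): routing every pair through a common centre would leave one edge carrying $\Theta(n_{\overline B}^2)$ weighted paths and hence a constant scaling like $r^{k-1}$, so the whole argument hinges on showing the geodesic-like lattice routing disperses the traffic evenly enough that no edge is overloaded by more than an $r$-independent factor beyond the necessary $r^{k+1}$.
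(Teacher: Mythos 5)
Your overall scaffold matches the paper's: both use the chaining-of-paths Poincar\'e argument, both map the relevant manifold ball into $[0,1]^k$ via the bi-Lipschitz chart of \cref{ass::lip}, both overlay a grid of side $g$ chosen so that samples in the same or face-adjacent sub-cubes are graph neighbours, both pass through the inclusions $\overline B_{SP}(X_i,r)\subseteq\overline B_{\manifold M}(X_i,R)\cap\mathcal V\subseteq\overline B_{SP}(X_i,\lambda r)$ with the same $\lambda$, and both union-bound over at most $n^2$ (centre, integer-radius) configurations. Up to replacing your segment-shadowing lattice paths by coordinate-wise Hamming cell paths, the length bound and the probability estimates you outline are essentially the ones used in \cref{lmax} and \cref{cor::kappabound1}.

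The gap is exactly at the step you yourself flag as the main obstacle. You commit to \emph{fixed} representatives, one vertex per sub-cube, so that all $(x,y)$-traffic between two adjacent cells $A,B$ is forced through the single edge joining their representatives. Your congestion estimate then multiplies the number of cell pairs routed through a cell by $(\eta^+)^2$ and by ``the per-sub-cube counts squared'', i.e.\ by $N_{\max}^2$. By contrast the paper uses the \emph{random} Hamming paths of \cite{Ulrike2014}: the intermediate vertex in each interior cell of the Hamming cell path is chosen uniformly at random, and the Poincar\'e inequality is taken in expectation over that random path family (\cref{genStruc} and the remark following it, after \cite{Boyd05}). Averaging spreads the cell-to-cell traffic over all $N_A N_B$ edges between the two cells, so the relevant quantity is the \emph{maximal average} load $b_{\max}$, which \cref{cube} bounds by $(1+N_{\max}/N_{\min})^2 k/g^{k+1}$; the ratio $(N_{\max}/N_{\min})^2$ is $O(1)$ on the good event. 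Your fixed routing costs an additional factor of order $N_{\min}^2\asymp (n\varepsilon^k)^2$ in the worst-case congestion, which cannot be absorbed into $C_*$: the stated constant scales as $(\eta^+)^2/(n\varepsilon^k\eta^-)$, so the extra $(n\varepsilon^k)^2$ would make the Poincar\'e constant diverge as $n\to\infty$ under the standard asymptotics. To repair the argument you must either randomize the choice of representative per cell (as the paper does) and bound the \emph{expected} load, or exhibit a fixed but explicitly load-balanced routing together with a matching $\max_e$ congestion estimate; as written, the proposal does not yield the claimed $C_*$.
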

We prove \cref{theo:LPIsp} in \cref{sec::gen}. 

At this point we show that \cref{theo::LPIspPi2,cor::lpi-emp} are consequences of \cref{theo:LPIsp} applied to the specific graph measures $\eta_1$ and $\eta_2$,  respectively. 
We will see that these measures satisfy \cref{ass::poseta} at least with high probability 
and that the existence of the  bi-Lipschitz homeomorphism  (\cref{ass::lip}) is guaranteed by \cref{ass::secCurv}.   

\begin{lem}[Existence of bi-Lipschitz homeomorphism]\label{existbilip}
	Let $\manifold{M}$ be a $k$-dimensional submanifold of $\RZ^K$ such that \cref{ass::manifold,ass::secCurv} are satisfied with parameters $i(\manifold{M})$ and $\Lambda$.
	
	Then \cref{ass::lip} holds with $r_{max}:=r_{\bullet}=\min \left(\frac{i(\manifold{M})}{2},\frac{\pi}{2\sqrt{\Lambda}}\right)$.
\end{lem}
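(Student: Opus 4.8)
The plan is to realise $h$ as a composition $h=\Psi_{r_{\manifold{M}}}\circ\phi$, where $\phi:=\exp_{x_0}^{-1}$ flattens the geodesic ball onto a Euclidean ball in the tangent space $T_{x_0}\manifold{M}\cong\RZ^k$, and $\Psi_{r_{\manifold{M}}}$ is an elementary bi-Lipschitz homeomorphism of that Euclidean ball onto the cube $[0,1]^k$. Since $r_{\manifold{M}}<r_{\bullet}\le i(\manifold{M})/2$, the exponential map $\exp_{x_0}$ restricts to a diffeomorphism of the closed Euclidean ball $\overline{B}(0,r_{\manifold{M}})\subset T_{x_0}\manifold{M}$ onto $\overline{B}_{\manifold{M}}(x_0,r_{\manifold{M}})$ (radial geodesics have unit speed, so geodesic spheres are mapped to Euclidean spheres); write $\phi=\exp_{x_0}^{-1}$ for its inverse. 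Moreover, since $r_{\manifold{M}}<r_{\bullet}=\min(i(\manifold{M})/2,\pi/(2\sqrt{\Lambda}))$ and $\secK(\manifold{M})\le\Lambda$ (\cref{ass::secCurv}), the ball $\overline{B}_{\manifold{M}}(x_0,r_{\manifold{M}})$ is strongly geodesically convex; $r_{\bullet}$ is a lower bound for the convexity radius of $\manifold{M}$ under \cref{ass::manifold,ass::secCurv}, so any two of its points are joined by a minimizing geodesic contained in it.

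The key estimate is a uniform two-sided Lipschitz bound for $\phi$. Since $\manifold{M}$ is compact its sectional curvature is also bounded below, say $\kappa\le\secK(\manifold{M})\le\Lambda$. By the Rauch comparison theorem applied to Jacobi fields vanishing at the origin, for every $x_0$ and every $v\in T_{x_0}\manifold{M}$ with $|v|\le r_{\manifold{M}}<\pi/\sqrt{\Lambda}$ one has $\tfrac{2}{\pi}\abs{w}\le\abs{d(\exp_{x_0})_v(w)}\le C\abs{w}$ for all $w$, with a constant $C=C(\manifold{M})\ge1$ depending only on $\kappa$ and $r_{\bullet}$ — the radial part of $d(\exp_{x_0})_v$ is an isometry, and the transverse part is pinched between $\inf_{0<t\le\pi/(2\sqrt{\Lambda})}\tfrac{\sin(\sqrt{\Lambda}\,t)}{\sqrt{\Lambda}\,t}=\tfrac{2}{\pi}$ and the corresponding $\kappa$-comparison factor. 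In particular $\abs{d\phi}\le\pi/2$ on $\overline{B}_{\manifold{M}}(x_0,r_{\manifold{M}})$. Integrating $\abs{d\phi}\le\pi/2$ along the minimizing $\manifold{M}$-geodesics of the previous paragraph, integrating $\abs{d\exp_{x_0}}\le C$ along Euclidean segments inside $\overline{B}(0,r_{\manifold{M}})$, and using the global equivalence $\norm{x-y}\le\dist{\manifold{M}}{x,y}\le C_0\norm{x-y}$ of the ambient Euclidean and geodesic distances on the compact submanifold $\manifold{M}$, one obtains constants $0<\tilde c_1\le\tilde c_2<\infty$ depending only on $\manifold{M}$ with $\tilde c_1\norm{x-y}\le\abs{\phi(x)-\phi(y)}\le\tilde c_2\norm{x-y}$ for all $x,y\in\overline{B}_{\manifold{M}}(x_0,r_{\manifold{M}})$, with no dependence on $r_{\manifold{M}}$ or $x_0$.

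It remains to pass from a Euclidean ball to the cube. Fix once and for all a bi-Lipschitz homeomorphism $\Psi:\overline{B}(0,1)\to[0,1]^k$ with constants $0<a_1\le a_2<\infty$ depending only on $k$: for instance compose an affine identification $[-1,1]^k\to[0,1]^k$ with the radial Minkowski-gauge map $z\mapsto(\norm{z}_2/\norm{z}_\infty)z$, which carries $\overline{B}(0,1)$ onto $[-1,1]^k$ (more generally, any two compact convex bodies with non-empty interior in $\RZ^k$ are bi-Lipschitz homeomorphic). Put $\Psi_{r_{\manifold{M}}}(z):=\Psi(z/r_{\manifold{M}})$, a bi-Lipschitz homeomorphism $\overline{B}(0,r_{\manifold{M}})\to[0,1]^k$ with constants $a_1/r_{\manifold{M}}\le a_2/r_{\manifold{M}}$. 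Then $h:=\Psi_{r_{\manifold{M}}}\circ\phi$ is a homeomorphism of $\overline{B}_{\manifold{M}}(x_0,r_{\manifold{M}})$ onto $[0,1]^k$ satisfying $\tfrac{a_1\tilde c_1}{r_{\manifold{M}}}\norm{x-y}\le\norm{h(x)-h(y)}\le\tfrac{a_2\tilde c_2}{r_{\manifold{M}}}\norm{x-y}$, so \cref{ass::lip} holds with $r_{max}=r_{\bullet}$ and with the universal constants $L_{min}^*:=a_1\tilde c_1$ and $L_{max}^*:=a_2\tilde c_2$ (replacing $L_{min}^*$ by $L_{min}^*/2$ if needed to make the inequality $L_{min}^*<L_{max}^*$ strict).

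The main obstacle is obtaining two-sided Lipschitz control of $\exp_{x_0}$ that is uniform over the base point $x_0$ and over all radii $r_{\manifold{M}}<r_{\bullet}$. This is precisely where \cref{ass::secCurv} enters, and enters twice: the upper bound $\Lambda$ keeps $r_{\bullet}$ below $\pi/(2\sqrt{\Lambda})$, so the Rauch factor $\sin(\sqrt{\Lambda}\,t)/(\sqrt{\Lambda}\,t)$ stays bounded away from $0$, and together with the positive injectivity radius it guarantees convexity of the small geodesic balls; the lower curvature bound, automatic by compactness of $\manifold{M}$, supplies the reverse Lipschitz bound. The remaining step (ball to cube) is elementary and only contributes a dimensional constant.
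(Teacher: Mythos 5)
Your proposal is correct and follows essentially the same decomposition as the paper: pull back via $\exp_{x_0}^{-1}$ using Rauch comparison (with the upper sectional curvature bound keeping the Rauch factor away from zero, and the lower bound from compactness giving the reverse estimate, plus convexity of small balls), then rescale and compose with a fixed bi-Lipschitz map from the unit Euclidean ball to the cube. The only cosmetic differences are that you merge the rescaling and the $[-1,1]^k\to[0,1]^k$ affine step into one map and exhibit an explicit Minkowski-gauge construction for the ball-to-cube homeomorphism, whereas the paper factors $h$ into four maps and cites a reference for the ball-to-cube step and for the numerical Rauch constants.
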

The proof can be found at the end of this section. 
Now we are able to prove our main results.

\begin{proof}[of \cref{cor::lpi-emp}]
	Obviously, $\eta_1$ satisfies \cref{ass::poseta} and $\eta^+=\eta^-=1/n$.
	\Cref{ass::lip} is satisfied by \cref{existbilip} with $r_{max}:=\min \left(\frac{i(\manifold{M})}{2},\frac{\pi}{2\sqrt{\Lambda}}\right)$.
	Now we apply \cref{theo:LPIsp} and multiply each side of \cref{theo:LPIsp}	with $1/n$ and consider therefore $n\cdot C_{*}$. The quantity dependent on $n,\varepsilon$ reduces to
	\begin{align}n\frac{ (\eta_1^+)^2}{\eta_1^-} \frac{1}{n\varepsilon^k}&= \frac{1}{n\varepsilon^k}.
	\end{align}
	The constant is therefore 
	\begin{align*}
	\hat{C}:=n C_*=&const\left( \lambda_1, c_l, L_{min}^*, k, w, \delta \right) \cdot \frac{1}{n\varepsilon^k}\\
	=& 
	\frac{n}{\varepsilon^k n}\frac{1 }{(1-\lambda_1)^2}   \frac{(\eta^+)^2}{(1-\delta)c_l \eta^-} 	\left(1+w\right)^2 k^2 \left(2\frac{\sqrt{k+3}}{L_{min}^*}\right)^{k+2}.\end{align*}
	We set $\pr{1}:=n^{-z}$ for $z>0$ and observe that under our standard asymptotics $n^2\cdot\pr{4}$  from \cref{theo:LPIsp} converge to 0 and $\pr{1}\rightarrow 0$ for $n$ going to infinity.
	Moreover, by \cref{rem:A3} \cref{ass::isomap} is satisfied in the standard asymptotic regime for $n$ large enough with probability going to 1. 
\end{proof}

\begin{proof} [of  \cref{theo::LPIspPi2}]
	\Cref{ass::lip} is satisfied by \cref{existbilip} with \linebreak $r_{max}:=r_{\bullet}=\min\left(\frac{i(\manifold{M})}{2},\frac{\pi}{2\sqrt{\Lambda}}\right)$. 
	Recall that the degree volume graph measure is given by
	\[\eta_2(x)= \frac{\grad(x)}{\vol(\mathcal{V})}\quad  \forall x\in \mathcal{V}.\]
	%
	%
	By applying \cref{theo::edegrees,rem:deg} we get that with probability at least \linebreak $1-2n\exp(-\delta^2(n-1)c_l \varepsilon^k/3)$ 
	\begin{align*}
	\grad_{min}&> (1-\delta)(n-1)c_l \varepsilon^k >0 \text{~and~}\\
	\grad_{max}&< (1+\delta)(n-1) c_u (1-\lambda_1)^{-k}\varepsilon^k <\infty.\end{align*}
	This implies that  with probability at least $1-2n\exp(-\delta^2(n-1)c_l \varepsilon^k/3)$ \cref{ass::poseta} holds. Now we can apply \cref{theo:LPIsp} and multiply each side with $\vol(\mathcal{V})$. We consider the term $\vol(\mathcal{V})C_{*}$. The quantity depending on $\varepsilon$ and $n$ can be bounded with probability at least $1-2n\exp(-\delta^2(n-1)c_l \varepsilon^k/3)$
	\begin{align}
	\vol{\mathcal{V}}\frac{(\eta_2^+)^2(x)}{\eta_2^-(y)} \frac{1}{n\varepsilon^k}&= \frac{\max_x \grad^2(x)}{\min_y \grad(y) n\varepsilon^k}\\
	&\leq \frac{(1+\delta)^2(n-1)^2 c_u^2 (1-\lambda_1)^{-2d}\varepsilon^2d}{(1-\delta)(n-1)c_l \varepsilon^k n \varepsilon^k}\\
	&\leq \frac{(1+\delta)^2 c_u^2}{(1-\lambda_1)^{2d}(1-\delta)c_l}.
	\end{align}
	The constant is \[\hat{C}:=\vol{\mathcal{V}\cdot C_*}
	\frac{(1+\delta)^2c_u^2}{(1-\lambda_1)^{2(k+1)}(1-\delta)^2 c_l^2 } 	\left(1+w\right)^2 k^2 \left(2\frac{\sqrt{k+3}}{L_{min}^*}\right)^{k+2}.\]	
\end{proof}

\begin{proof}[of \cref{existbilip}]
	Let us consider $\overline{B}_{ \manifold{M}}(x,r)$ for fixed $x\in \manifold{M}$ and fixed $0<r\leq r_{max}$. 
	We construct $h=g_0 \circ g_1 \circ g_2 \circ g_3: \overline{B}_M(x,r)\rightarrow [0,1]^k $ (with $h(x)=g_0(g_1(g_2(g_3(x))))$) as a composition of four bi-Lipschitz homeomorphisms.\\
	We define $g_2:\cB{0}{r}{E} \subset \RZ^k \rightarrow   \cB{0}{1}{E} \subset\RZ^k, x\mapsto x/r$.
	This function is continuous and bijective. The inverse function $h^{-1}(y)=yr$ is also continuous. Since $\norm{\frac{x}{r} -\frac{z}{r}}= 1/r \norm{x-z }$, $h$ is a bi-Lipschitz homeomorphism satisfying \cref{bilip} with $L_{g_2,min}=L_{g_2,max}=\frac{1}{r}$.\\
	The existence of the bi-Lipschitz homeomorphism $g_1: \cB{0}{1}{E}\subset \RZ^k \rightarrow [-1,1]^k$ is proved by \cite[Cor. 3]{WIAS}. The Lipschitz constants $L_{g_1,min}, L_{g1,max}$ do not depend by definition on the properties of the submanifold or the radius $r$.\\
	We define $g_0:[-1,1]^k \subset \RZ^k \rightarrow  [0,1]^k, x\mapsto \frac{x+1}{2}$ which is as $g_2$ a bi-Lipschitz homeomorphism with Lipschitz constants $L_{g_0,min}=L_{g_0,max}=0.5$.\\
	Finally we set
	$g_3:\cB{x}{r}{\manifold{M}} \rightarrow \cB{0}{r}{E} \subset \RZ^k$
	to be the inverse of the exponential map  $\exp_x: T_x\manifold{M} \rightarrow \manifold{M}$ restricted to the domain $\cB{0}{r}{E}\subset T_x\manifold{M}$. It is known that the exponential map is a diffeomorphism when the domain is restricted to a ball of radius smaller than the injectivity radius of $\manifold{M}$. Furthermore the ball $\overline{B}_{\manifold{M}}(p,r)$ is (strongly) convex if $r\leq \min(i/2, \frac{\pi}{2\sqrt{\Lambda}})$ (see \cite{chavel}[Theorem IX.6.1] or \cite{Rauch-Buser}[Prop. 6.4.6]).
	The Rauch Theorem stated in \cite[Lemma 5]{RauchTheorem} provides us with bounds on the derivative of $\exp_p$. 
	Considering the length of the image $\exp_p(s(t))$ of the linear segment $s(t)$ connecting $v$ to $v'$ ($v,v'\in \overline{B}_E(0,r)$) and the length of the image $\exp_p^{-1}(u(t))$ of the geodesic $u(t)$ connecting $y=\exp_p(v)$ to $y'=\exp_p(v')$ in $\overline{B}_{\manifold{M}}(p,r)$ we get by  the chain rule, the inverse function theorem on manifolds and the convexity of the ball the following Lipschitz constants:
	$L_{g_3,min}:=\left(1+\frac{\pi^2}{8}\right)^{-1}\geq 0.4$ 
	and
	$L_{g_3,max}:=\left(1-\frac{\pi^2}{24}\right)^{-1}\leq 1.7$. 
	
	
	%
	%
	
	Since the composition of Lipschitz function is Lipschitz, we obtain
	$L_{min}\norm{x-y}\leq \norm{h(x)-h(y)}\leq L_{max}\norm{x-y}$
	with  $L_{max}:=L_{g_0,max} L_{g_1,max} L_{g_2,max}  L_{g_3 ,max} = \frac{L_{max}^*}{r}$ and $L_{min}:= L_{g_0,min} L_{g_1,min} L_{g_2,min} L_{g_3 ,min}=\frac{L_{min}^*}{r}$ with $L_{min}^*,L_{max}^*\in\RZ$.
	Thus we constructed a bi-Lipschitz homeomorphism for the fixed ball with  constants $L_{min}^*, L_{max*}^*$ independent of $x$ and $r$ which is possible for any ball $\overline{B}_{\manifold{M}}(x,r)$ with $x\in\manifold{M}$ and $0<r<r_{max}$.  
\end{proof}

\subsection{Proof of \cref{theo:LPIsp} }\label{sec::gen}
We first concentrate on proving the local Poincar\'e inequality for a given ball. \Cref{theo:LPIsp} 
will then be obtained by a union bound over the center points and radii which are of finite number.

The four main ingredients of the proof of \cref{theo:LPIsp} 
are a general approach of \citeauthor{Diaconis} (\cite{Diaconis}) to derive a Poincar\'{e}-type  inequality involving the quantities maximal average load and maximal path length,  the random Hamming paths ( introduced in \cite{Ulrike2014}) as tool for bounding the maximal average load and the maximal path length,  Ahlfors regularity and the distance approximation ($d_{SP} \approx \distb{\manifold{M}}$, see \cref{sec::distapp}).

Let us shortly explain our way to prove \cref{theo:LPIsp}.
We first recall a result on the general structure of the inequality including the quantity $\kappa$ and  we present an upper bound for $\kappa$. Then we derive a local Poincar\'e inequality  first in $\distb{\manifold{M}}$ and finally in $d_{SP}$.

We start by deriving a local Poincar\'e inequality for balls in $\distb{\manifold{M}}$-distance. We obtain the general structure of the Poincar\'e inequality by an approach of \cite{Diaconis}. 

To this end we define $\Gamma$ to be a collection of paths, consisting of one path connecting $x$ to $y$ in $G$ for every pair of points $x,y\in \mathcal{V}$. Moreover, 
we define the maximal path length of the collection $l_{max}(\Gamma):=\max_{\gamma\in \Gamma} \NE{\gamma}$ and the load of an edge $b(e,\Gamma):=\sum_{\gamma\in \Gamma: \gamma\ni e} 1$.

\begin{theorem}[general structure]\label{genStruc}
	Let $G=(\mathcal{V}, \mathcal{E})$ be a given connected graph. 
	Let $\eta$ be a discrete probability measure on $\mathcal{V}$. 
	Let $\Gamma$ be a collection of paths.
		
	Then there exists a quantity $\kappa:=\kappa(\Gamma, \eta)>0$ such that for all functions $f: \mathcal{V}\rightarrow \RZ$ 
	the inequality 
	\begin{align}\label{gs-2}
	\sum_{x\in \mathcal{V}} (f_x-\overline{f})^2 \eta(x)  \quad 
	&\leq  \kappa 
	\sum_{x\in \mathcal{V}}\sum_{\substack{y\in \mathcal{V},\\ y\sim x}} (f_x-f_y)^2 
	\end{align}
	holds where
	$\overline{f}=\sum_{x\in \mathcal{V}} f_x \eta(x)$ 
	and $\kappa:= 0.5  \max_{z\in \mathcal{V}} \eta^2(z) l_{max}(\Gamma) \max_{e\in \mathcal{E}}b(e,\Gamma)$. 
\end{theorem}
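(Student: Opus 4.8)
The plan is to follow the classical path-counting argument of \cite{Diaconis}, which converts a sum of squared differences over \emph{all} pairs of vertices into a sum over the edges of $G$ by routing each pair along its designated path in $\Gamma$, and then controls the two combinatorial costs of this rerouting by $l_{max}(\Gamma)$ and $\max_{e}b(e,\Gamma)$.

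The first step is the variance identity. Since $\eta$ is a probability measure on $\mathcal{V}$ and $\overline{f}=\sum_{x\in\mathcal{V}}f_x\eta(x)$, expanding the square and using $\sum_{x}\eta(x)=1$ gives
\[
\sum_{x\in\mathcal{V}}(f_x-\overline{f})^2\eta(x)
=\frac12\sum_{x,y\in\mathcal{V}}\eta(x)\eta(y)(f_x-f_y)^2
=\sum_{\{x,y\}:\,x\neq y}\eta(x)\eta(y)(f_x-f_y)^2 ,
\]
the last equality because the summand is symmetric and the diagonal terms vanish. The second step routes each pair: for distinct $x,y$ write $\gamma_{xy}=(v_0,\dots,v_l)\in\Gamma$ with $v_0=x$, $v_l=y$, $l=\NE{\gamma_{xy}}$; telescoping gives $f_x-f_y=\sum_{i=1}^{l}(f_{v_{i-1}}-f_{v_i})$, and Cauchy--Schwarz yields
\[
(f_x-f_y)^2\le \NE{\gamma_{xy}}\sum_{i=1}^{l}(f_{v_{i-1}}-f_{v_i})^2\le l_{max}(\Gamma)\sum_{i=1}^{l}(f_{v_{i-1}}-f_{v_i})^2 .
\]
Plugging this into the variance identity and exchanging the order of summation (first over pairs, then over the edges those paths traverse) gives
\[
\sum_{x\in\mathcal{V}}(f_x-\overline{f})^2\eta(x)\le l_{max}(\Gamma)\sum_{e\in\mathcal{E}}(f_{e^+}-f_{e^-})^2\sum_{\{x,y\}:\,e\in\gamma_{xy}}\eta(x)\eta(y),
\]
where for an edge $e=\{e^+,e^-\}$ the term $(f_{e^+}-f_{e^-})^2$ is unambiguous.

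The third step bounds the inner weight crudely: $\eta(x)\eta(y)\le\max_{z\in\mathcal{V}}\eta^2(z)$, and the number of paths of $\Gamma$ through $e$ equals $b(e,\Gamma)\le\max_{e'\in\mathcal{E}}b(e',\Gamma)$, so the inner sum is at most $\max_{z}\eta^2(z)\,\max_{e}b(e,\Gamma)$. Finally, since each undirected edge is counted twice in the right-hand side of \eqref{gs-2}, one has $\sum_{e\in\mathcal{E}}(f_{e^+}-f_{e^-})^2=\tfrac12\sum_{x\in\mathcal{V}}\sum_{y\sim x}(f_x-f_y)^2$; substituting and collecting constants produces \eqref{gs-2} with exactly the claimed value $\kappa=0.5\,\max_{z\in\mathcal{V}}\eta^2(z)\,l_{max}(\Gamma)\,\max_{e\in\mathcal{E}}b(e,\Gamma)$, which is strictly positive because $\eta$ is a probability measure (so $\eta(z)>0$ for some $z$) and, $G$ being connected with $|\mathcal V|\ge 2$, $\mathcal{E}\neq\emptyset$ and every $\gamma_{xy}$ between distinct vertices has at least one edge.

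This lemma is routine, so I do not expect a genuine obstacle; the one point that needs care is the bookkeeping of the factor $1/2$. It appears once from passing to unordered pairs in the variance identity, where it exactly cancels, while the $1/2$ that survives in $\kappa$ is the one coming from the double-counting of undirected edges in the Dirichlet form $\sum_x\sum_{y\sim x}(f_x-f_y)^2$. A secondary technical point is that if some $\gamma_{xy}$ is not self-avoiding, one should count edge-traversals with multiplicity in both $\NE{\gamma_{xy}}$ and $b(e,\Gamma)$ (equivalently, replace each $\gamma_{xy}$ by a simple sub-path, which only shortens it and is harmless); with that convention the interchange-of-summation step goes through verbatim.
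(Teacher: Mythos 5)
Your proof is correct and follows the same Diaconis--Stroock path-counting argument as the paper: the variance identity $\sum_x(f_x-\overline f)^2\eta(x)=\tfrac12\sum_{x,y}\eta(x)\eta(y)(f_x-f_y)^2$, telescoping along $\gamma_{xy}$ with Cauchy--Schwarz, interchange of summation, and crude bounds via $l_{max}$ and $\max_e b(e,\Gamma)$. The only cosmetic differences are that the paper first passes through a general edge weight $g(e)$ and the quantity $Q_{xy}=\sum_{e\in\gamma_{xy}}g(e)^{-1}$ before specializing to $g\equiv 1$, and that it keeps ordered pairs throughout (so the $1/2$ in $\kappa$ comes from the variance identity, with $\sum_{e\in\mathcal E}\Delta^2(e)=\sum_{x\sim y}(f_x-f_y)^2$ since $\mathcal E$ is a set of ordered pairs), whereas you pass to unordered pairs and let the $1/2$ reappear from the edge double-count; both accountings land on the same $\kappa$.
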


\begin{proof}
	For the proof we will make use of the ideas of \citeauthor{Diaconis} presented in \cite{Diaconis}. 
	We start with fixing a path $\gamma_{xy}$ in $G$ for every pair of points $x,y\in \mathcal{V}$. Let $\Gamma:=\{\gamma_{xy}:x,y\in \mathcal{V}\}$.
	Then it is well-known that
	\begin{align*}
	\sum_{x\in \mathcal{V}} (f_x-\overline{f})^2 \eta(x)
	&=\frac{1}{2} \sum_{x\in \mathcal{V}}\sum_{y\in \mathcal{V}} \left(f_x-f_y\right)^2 \eta(x)\eta(y)
	\end{align*}
	holds. Now substitute $f(x)-f(y)$ by $\sum_{e\in \gamma_{xy}}\Delta(e)\left(\frac{g(e)}{g(e)}\right)^{1/2}$
	where $\gamma_{xy}$ is the fixed path from $x$ to $y$ in $G$	and $\Delta(e):=f(a)-f(b)$ if $e=(a,b)$ and $g(e)> 0$ for all $e\in \mathcal{E}$. Denote $Q_{xy}:=\sum_{e\in \gamma_{xy}}\frac{1}{g(e)}$. Then 
	\begin{align}
	\sum_{x\in \mathcal{V}} (f_x-\overline{f})^2 \eta(x) 
	&\stackrel{1)}{\leq} 0.5 	\sum_{x\in \mathcal{V}}\sum_{y\in \mathcal{V}}  \left(\sum_{e\in \gamma_{xy}} \Delta^2(e) g(e)\right) Q_{xy}
	\eta(x)\eta(y)\nonumber \\
	&\stackrel{2)}{=}0.5 \sum_{e\in \mathcal{E}} \Delta^2(e) g(e) \sum_{x,y\in \mathcal{V}: \gamma_{xy}\ni e} Q_{xy}\eta(x)\eta(y)\nonumber\\
	&\stackrel{3)}{\leq}  T \sum_{e\in \mathcal{E}} \Delta^2(e) g(e) b(e, \Gamma) \label{bl} \\ 
	&\stackrel{4)}{\leq}  \kappa \sum_{\substack{x,y\in \mathcal{V},\\x\sim y}}  \left(f(x)-f(y)\right)^2  \nonumber
	\end{align}
	follows by 1) applying Cauchy-Schwarz, 2) rearranging the summation and 3) setting 
	$T:= 0.5  \max_{z\in \mathcal{V}} \eta^2(z) \max_{x,y\in \mathcal{V}} Q_{xy}$ and using the definition of the load. 
	In 4)  we finally set for all edges $g(e)=g(a,b)=a_{ab}$ for $e=(a,b)$. Then $Q_{xy}=\NE{\gamma_{xy}}$. Using  
	the maximal path length $l_{max}(\Gamma)$ 
	we can choose
	\[\kappa:= 0.5  \max_{z\in \mathcal{V}} \eta^2(z) l_{max}(\Gamma) \max_{e\in \mathcal{E}}b(e,\Gamma).\]
\end{proof}

\begin{remark}
	The generalization to a randomly chosen set of paths $\Gamma$ (that is every path $\gamma_{xy}$ is chosen at random from a set of possible paths $\tilde{\Gamma}$) follows \cite{Boyd05}.
	We observe that in \cref{bl} only $\kappa$, in particular $b(e,\Gamma)$, depends on the set of chosen paths. By taking the expectation w.r.t. the randomly chosen set of paths 
	we obtain
	\begin{align*}
	\sum_{x\in \mathcal{V}} (f_x-\overline{f})^2 \eta(x) \leq \kappa \sum_{\substack{x,y\in \mathcal{V},\\ x\sim y}}  \left(f(x)-f(y)\right)^2 
	\end{align*}
	with \begin{align}\label{kap}\kappa:= 0.5  l_{max}(\tilde{\Gamma}) \max_{x\in \mathcal{V}}\eta^2(x) b_{max}(\tilde{\Gamma})\end{align} where $l_{max}(\tilde{\Gamma})$ 
	is the maximal length of all possible paths and \linebreak $b_{max}(\tilde{\Gamma}):=\max_{e\in\mathcal{E}}\ewx{\Gamma}{b(e,\Gamma)}$  is the maximal average load.
\end{remark}

Since we wish to prove a Poincar\'{e} inequality being local w.r.t. to balls 
we will apply the previous principle to a subgraph of $G$ corresponding to the points belonging to  some  ball.

We  define a subgraph $G_B$ of a graph $G=(\mathcal{V},\mathcal{E})$ to be the graph with vertex set $B\subset \mathcal{V}$ and edge set $\mathcal{E}_B:=\{e=(a,b)\in \mathcal{E} \text{~with~} a,b \in B\}\subset \mathcal{E}$. If $\eta$ is a measure on $G$, then the induced measure $\tilde{\eta}$ on $G_B$ is given by the point measure  $\tilde{\eta}(x)=\frac{\eta(x)}{\sum_{x\in B}\eta(x)}=\frac{\eta(x)}{\eta(B)}$ for all $x\in B$ with $\eta(B)>0$. 
\nomenclature[Cz]{$G_A$}{subgraph with vertex set $A$}
\nomenclature[Cz]{$\tilde{\eta}$}{graph measure conditioned on subset}

\begin{cor}\label{cor:gs}
	Let $G=(\mathcal{V}, \mathcal{E})$ be a given graph and $\eta$ a discrete probability measure on $\mathcal{V}$.
	
	Then for a connected subgraph  $G_B=(B,\mathcal{E}_B)$ with $B\subseteq \mathcal{V}$ and $\eta(B)>0$  there exists a quantity $\kappa_B>0$ such that for all functions $f: \mathcal{V}\rightarrow \RZ$ 
	the inequality 
	\begin{align}\label{gs-3}
	\sum_{x\in B} (f_x-\overline{f}_B)^2 \eta(x)  \quad 
	&\leq  \tilde{\kappa}_B 
	\sum_{x\in B}\sum_{\substack{y\in B\\ y\sim x}} (f_x-f_y)^2 
	\end{align}
	where
		$\overline{f}_B=\frac{\sum_{x\in B} f_x \eta(x)}{\sum_{z\in B}\eta(z)}$
	holds. For a set of possible paths $\tilde{\Gamma}_B$ and a randomly chosen set of paths $\Gamma_B$ we obtain
	\begin{align}\label{kapfinal}\tilde{\kappa}_B:= 0.5 \left(\sum_{z\in B}\eta(z)\right)^{-1} l_{max}(\tilde{\Gamma}_B) \max_{x\in B}\eta^2(x) 
	b_{max}(\tilde{\Gamma}_B).\end{align}
\end{cor}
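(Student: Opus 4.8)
The plan is to derive \cref{cor:gs} as a direct application of \cref{genStruc} to the subgraph $G_B=(B,\mathcal{E}_B)$, equipped with the \emph{renormalized} induced measure. First I would note that, since $\eta(B)=\sum_{z\in B}\eta(z)>0$, the point measure $\tilde{\eta}(x):=\eta(x)/\eta(B)$ for $x\in B$ is a well-defined discrete probability measure on $B$, and $G_B$ is connected by hypothesis. Hence \cref{genStruc} applies to the pair $(G_B,\tilde{\eta})$ and, for any collection $\Gamma_B$ of paths in $G_B$ (one path per pair $x,y\in B$), gives for every $f\colon B\to\RZ$ — in particular for the restriction to $B$ of any $f\colon\mathcal{V}\to\RZ$, since the right-hand side only involves edges inside $B$ —
\[
\sum_{x\in B}(f_x-\overline{f}_B)^2\,\tilde{\eta}(x)\;\le\;\kappa_B\sum_{x\in B}\sum_{\substack{y\in B\\ y\sim x}}(f_x-f_y)^2,
\]
where $\overline{f}_B=\sum_{x\in B}f_x\,\tilde{\eta}(x)$ and $\kappa_B=0.5\,\max_{z\in B}\tilde{\eta}^2(z)\,l_{max}(\Gamma_B)\,\max_{e\in\mathcal{E}_B}b(e,\Gamma_B)$.

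Next I would translate this back in terms of $\eta$. By definition of $\tilde{\eta}$ one has $\overline{f}_B=\bigl(\sum_{z\in B}\eta(z)\bigr)^{-1}\sum_{x\in B}f_x\,\eta(x)$, which is exactly the barycenter appearing in \cref{gs-3}. Multiplying the displayed inequality through by $\eta(B)=\sum_{z\in B}\eta(z)$ replaces $\tilde{\eta}(x)$ by $\eta(x)$ on the left-hand side while multiplying the constant by $\eta(B)$; since $\max_{z\in B}\tilde{\eta}^2(z)=\eta(B)^{-2}\max_{z\in B}\eta^2(z)$, the resulting constant is
\[
\eta(B)\,\kappa_B=0.5\,\Bigl(\sum_{z\in B}\eta(z)\Bigr)^{-1}l_{max}(\Gamma_B)\,\max_{z\in B}\eta^2(z)\,\max_{e\in\mathcal{E}_B}b(e,\Gamma_B),
\]
which is the asserted form for a deterministically chosen path collection.

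Finally, for the randomized version stated in \cref{kapfinal}, I would invoke the Remark following \cref{genStruc}: choosing each $\gamma_{xy}$ at random from a set $\tilde{\Gamma}_B$ of admissible paths in $G_B$ and taking the expectation over the random path collection, \cref{kap} applied to $(G_B,\tilde{\eta})$ yields the same inequality with $\kappa_B=0.5\,l_{max}(\tilde{\Gamma}_B)\,\max_{x\in B}\tilde{\eta}^2(x)\,b_{max}(\tilde{\Gamma}_B)$; multiplying through by $\eta(B)$ as above produces precisely $\tilde{\kappa}_B$ as in \cref{kapfinal}. I do not anticipate a genuine obstacle here — the only points requiring care are checking that $\tilde{\eta}$ is a bona fide probability measure (guaranteed by $\eta(B)>0$), that the connectedness assumption on $G_B$ is exactly what licenses the use of \cref{genStruc} on the subgraph, and the bookkeeping of the normalization factor $\eta(B)^{-1}$ when passing between $\tilde{\eta}$ and $\eta$.
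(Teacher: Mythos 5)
Your proof is correct and follows exactly the route the paper implicitly intends: the paper introduces the renormalized induced measure $\tilde{\eta}(x)=\eta(x)/\eta(B)$ just before this corollary precisely so that \cref{genStruc} (and its Remark on randomized path collections) can be applied to $(G_B,\tilde{\eta})$, after which the bookkeeping of the normalization factor $\eta(B)$ gives \cref{kapfinal}. The observation that the right-hand side only involves edges inside $B$, so restricting $f$ to $B$ is harmless, and the identity $\max_{z\in B}\tilde{\eta}^2(z)=\eta(B)^{-2}\max_{z\in B}\eta^2(z)$ are exactly the right details to check, and you handled them correctly.
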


\begin{remark}
	Note that the double sum of the right side of \cref{gs-2,gs-3} corresponds with the Dirichlet form generated by some operator $L$ and associated measure $\eta$:
	\[\sum_{x,y\in B}  \left(f(x)-f(y)\right)^2 a_{xy}=\int_B f(x)L_1 f(x) d\eta_1(x)=:\Xi(f,f)\]  for $L_1=\mathbf{D}-\mathbf{A}$ and
	\[\sum_{x,y\in B}  \left(f(x)-f(y)\right)^2 a_{xy}=\int_B f(x) L_2 f(x) d\eta_2(x)=:\Xi(f,f)\]  for $L_2=\Einh -\mathbf{D}^{-1}\mathbf{A}$.	
\end{remark}
	
In particular, we will use \cref{cor:gs} for a subset of the form $B=\overline{B}_{\mathcal{V},\distb{\manifold{M}}}(x,r)$.
In this case, an upper bound of $\tilde{\kappa}_B$ of the form $c_{\kappa}\cdot r^2$ can be derived under the \cref{ass::poseta,ass::Ahlfors,ass::lip}.
if we choose an adequate set $\tilde{\Gamma}$ of specific paths.

\begin{lem}[$\kappa$ bound]\label{cor::kappabound1}
	Let $G=(\mathcal{V}, \mathcal{E})$ be an $\varepsilon$-graph defined from an i.i.d. sample of size $n$ from  the probability  measure $\mu$ on the submanifold $\manifold{M}$ of $\RZ^K$ such that \cref{ass::manifold,ass::graph,ass::Ahlfors,ass::lip} are satisfied with parameters $\lambda_1, \lambda_2, c_l,c_u, k, L_{min}^*,L_{max}^*$ and $\eta$ the graph measure.\\
	Let's consider $B:=\overline{B}_{\mathcal{V},\distb{\manifold{M}}}(X_i,r_{\manifold{M}})$ with $0<r_{\manifold{M}}<r_{max}=\min \left( i(\manifold{M})/2,\frac{\pi}{4\sqrt{\Lambda}}\right)$ and $X_i\in\mathcal{V}$. %
	Let $\delta\in (0,1)$ and denote $\pr{6}
	:=2 \exp\left(-\frac{\delta^2 (n-1) c_l r_{\manifold{M}}^k}{3}\right)$ and \\$
	\pr{7}:=2	\left(\frac{2 \sqrt{k+3} r_{\manifold{M}}}{L_{min}^*\varepsilon}\right)^k  \exp\left(-\frac{1}{3}\delta^2 (n-1) c_l \frac{\varepsilon^k {L_{min}^*}^k}{4^k \sqrt{k+3}^k {L_{max}^*}^k}\right)$.
	
	If 
	$n\geq \frac{1}{(1-\delta)c_l} \left(\frac{4 \sqrt{k+3} L_{max}^*}{L_{min}^* \varepsilon}\right)^k +1$ 
	and $\frac{\sqrt{k+3}}{L_{min}\varepsilon}\geq 1$, 
	then, with probability at least $1-\pr{7}-\pr{6}$, there exists a constant $C_{\kappa}=C_{\kappa}(n,\varepsilon, k, L_{min}^*,L_{max}^*,c_l,c_u,\delta)>0$ such that 
	
	\begin{align}\label{ineq:k}
	\tilde{\kappa}_B \leq C_{\kappa} r_{\manifold{M}}^2.   
	\end{align}
	The constant is given by \begin{align}
	C_{\kappa}:=    \frac{1}{n \varepsilon^{k+2}}
	\frac{(\eta^+)^2}{(1-\delta)c_l  \eta^-} 	\left(1+w\right)^2 k^2 \left(2\frac{\sqrt{k+3}}{L_{min}^*}\right)^{k+2}
	\end{align}
	with $w:=\frac{2(1+\delta)}{(1-\delta)}\frac{c_u}{c_l}\frac{{L_{max}^*}^k}{{L_{min}^*}^k}4^k \sqrt{k+3}^k$.
\end{lem}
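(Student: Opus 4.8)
The plan is to bound separately the three non-trivial factors in the expression
\[
\tilde{\kappa}_B \;=\; \tfrac12\,\eta(B)^{-1}\; l_{max}(\tilde{\Gamma}_B)\;\Bigl(\max_{x\in B}\eta(x)\Bigr)^2\; b_{max}(\tilde{\Gamma}_B)
\]
provided by \cref{cor:gs}, for a well-chosen family $\tilde{\Gamma}_B$ of \emph{random Hamming paths} in the sense of \cite{Ulrike2014}. The factor $\bigl(\max_{x\in B}\eta(x)\bigr)^2\le(\eta^+)^2$ is immediate, so the substance is: (i) a lower bound $\eta(B)\gtrsim \eta^-\,n\,c_l\,r_{\manifold{M}}^k$; (ii) an upper bound $l_{max}(\tilde{\Gamma}_B)\le kN$ on the maximal path length; and (iii) a congestion (maximal average load) bound $b_{max}(\tilde{\Gamma}_B)\lesssim k(1+w)^2 N^{\,k+1}$, where $N\asymp 2\sqrt{k+3}\,r_{\manifold{M}}/(L_{min}^*\varepsilon)$ is the grid resolution introduced below. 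Multiplying the three bounds and simplifying produces exactly the claimed $C_\kappa r_{\manifold{M}}^2$; item (iii) is the heart of the argument.

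To define $\tilde{\Gamma}_B$, condition on $X_i$ and invoke \cref{ass::lip} (which holds for the random ball almost surely and whose hypotheses are supplied by \cref{existbilip} under \cref{ass::secCurv}): there is a bi-Lipschitz homeomorphism $h\colon \overline{B}_{\manifold{M}}(X_i,r_{\manifold{M}})\to[0,1]^k$ with constants $L_{min}^*/r_{\manifold{M}}$ and $L_{max}^*/r_{\manifold{M}}$. Tile $[0,1]^k$ into $N^k$ closed subcubes of side $1/N$ with $N:=\lceil 2\sqrt{k+3}\,r_{\manifold{M}}/(L_{min}^*\varepsilon)\rceil$ (the hypothesis $\sqrt{k+3}/(L_{min}\varepsilon)\ge1$ forces $N\ge2$). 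The crucial feature of this scale is that the union of two subcubes sharing a face has Euclidean diameter $\sqrt{(k-1)+4}/N=\sqrt{k+3}/N$, so if $h(x),h(y)$ lie in equal or adjacent cells then $\norm{x-y}\le(\sqrt{k+3}/N)\,r_{\manifold{M}}/L_{min}^*\le\varepsilon$ and hence $x\sim y$ in $G$. For each ordered pair $(x,y)$ of vertices of $B$ we build $\gamma_{xy}$ by drawing a uniform random permutation of the $k$ axes, walking cell-by-cell from the cell of $h(x)$ to that of $h(y)$ while changing one coordinate at a time in that order, and selecting in each visited cell a uniformly random sample point (with $x,y$ forced at the ends). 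Consecutive selected points lie in equal or adjacent cells and are therefore edges of $G$; since every visited cell lies in $[0,1]^k$, its $h$-preimage lies in $\overline{B}_{\manifold{M}}(X_i,r_{\manifold{M}})$, so $\gamma_{xy}$ is a legitimate path in $G_B$, provided every cell it visits is nonempty.

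The easy factors and the concentration inputs are then as follows. A Hamming walk visits at most $1+k(N-1)\le kN$ cells, so $l_{max}(\tilde{\Gamma}_B)\le kN$. For $\eta(B)$ I use $\eta(B)\ge\eta^-\,n_B$ and \cref{theo::ahlf}\,\ref{part3} (valid for the closed ball by \cref{rem:deg}): with probability at least $1-\pr{6}$ one has $n_B\ge1+(1-\delta)(n-1)c_l r_{\manifold{M}}^k\ge(1-\delta)\,n\,c_l\,r_{\manifold{M}}^k$, the last step using $(1-\delta)c_l r_{\manifold{M}}^k\le\mu(\overline{B}_{\manifold{M}}(X_i,r_{\manifold{M}}))\le1$. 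For the cells: conditionally on $X_i$ the $N^k$ cells are fixed Borel sets, each with $\mu$-measure at least $c_l(\varepsilon L_{min}^*/(4\sqrt{k+3}L_{max}^*))^k$ (a cell contains a Euclidean ball of radius $1/(2N)$, whose $h$-preimage contains an $\manifold{M}$-ball of radius $\ge r_{\manifold{M}}/(2NL_{max}^*)$) and at most a matching $c_u$-multiple; applying \cref{cor:counting} to these $N^k$ sets with the remaining $n-1$ points and a union bound over the lower and upper deviation events gives, after integrating over $X_i$, that with probability at least $1-\pr{7}$ every cell contains between $(1-\delta)(n-1)\mu_{cell}$ and $(1+\delta)(n-1)\mu_{cell}$ points. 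The lower bound exceeds $1$ precisely because of the hypothesis $n\ge\frac1{(1-\delta)c_l}\bigl(4\sqrt{k+3}L_{max}^*/(L_{min}^*\varepsilon)\bigr)^k+1$, which makes all cells nonempty and the paths well-defined, and the two-sided bound yields $\max_{cell}n_{cell}/\min_{cell}n_{cell}\le1+w$ with $w$ as stated.

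The main obstacle is the congestion bound on $b_{max}(\tilde{\Gamma}_B)$, which I obtain by the canonical-path method adapted to random axis orders as in \cite{Ulrike2014}. Fix an edge $e=(a,b)$ of $G_B$ with $a\in F$, $b\in F'$ ($F,F'$ equal or face-adjacent cells). A path $\gamma_{xy}$ uses $e$ only if the random walk from the cell of $h(x)$ to the cell of $h(y)$ makes the $F\to F'$ transition and then $a$ is drawn from $F$ and $b$ from $F'$; conditionally on that transition the latter has probability $1/(n_F n_{F'})\le 1/(\min_{cell}n_{cell})^2$. For any fixed axis order, prescribing which coordinates are processed before and after the active one pins down all but one coordinate of the source cell and all but one of the target cell, so at most $N^{k+1}$ ordered cell-pairs route through $F\to F'$; each such cell-pair carries at most $(\max_{cell}n_{cell})^2$ vertex-pairs, and a union over the $k$ axis directions (and the within-cell case) absorbs a further factor. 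Averaging over the axis order therefore yields, on the good event of the previous step,
\[
\mathbb{E}_{\Gamma_B}\bigl[b(e,\Gamma_B)\bigr]\;\le\;2k\,N^{k+1}\,\frac{(\max_{cell}n_{cell})^2}{(\min_{cell}n_{cell})^2}\;\le\;2k\,(1+w)^2\,N^{k+1}.
\]
Substituting the bounds on $\eta(B)$, $l_{max}(\tilde{\Gamma}_B)$, $(\eta^+)^2$ and $b_{max}(\tilde{\Gamma}_B)$ into $\tilde{\kappa}_B$ and collecting the powers of $N\asymp2\sqrt{k+3}\,r_{\manifold{M}}/(L_{min}^*\varepsilon)$ reduces the right-hand side to $C_\kappa r_{\manifold{M}}^2$ with the stated constant. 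Since the only randomness used was confined to the events of probability $\le\pr{6}$ (the ball count) and $\le\pr{7}$ (the cell counts), a final union bound gives \cref{ineq:k} with probability at least $1-\pr{7}-\pr{6}$.
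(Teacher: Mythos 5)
Your proposal follows essentially the same route as the paper: decompose $\tilde{\kappa}_B$ via Corollary~\ref{cor:gs}, transport to $[0,1]^k$ by the bi-Lipschitz map of \ref{ass::lip}, construct random Hamming paths on a grid, bound $l_{max}$ and $b_{max}$ via grid-cell point counts, and control those counts and $n_B$ by Ahlfors regularity plus the binomial concentration inequalities — the only organizational difference is that you inline what the paper isolates as Lemma~\ref{cube} and Corollary~\ref{lmax}. One small discrepancy: you take the grid resolution $N=\lceil 2\sqrt{k+3}\,r_{\manifold{M}}/(L_{min}^*\varepsilon)\rceil$ whereas the paper uses $1/g=\lceil \sqrt{k+3}\,r_{\manifold{M}}/(L_{min}^*\varepsilon)\rceil$ (and then bounds this by twice the argument using $\sqrt{k+3}/(L_{min}\varepsilon)\geq1$); with your extra factor of $2$ inside the ceiling you only get $N\leq 3\sqrt{k+3}\,r_{\manifold{M}}/(L_{min}^*\varepsilon)$, so the resulting constant would be off by roughly $(3/2)^{k+2}$ from the stated $C_\kappa$, though the structure and probability bookkeeping are otherwise correct.
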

\nomenclature[Cz]{$\mu_{min}, \mu_{max}$}{minimal and maximal measure of a ball (radius ??)} 
\nomenclature[Cz]{$\pr{a}$}{probability, Ahlfors} 
\nomenclature[Cz]{$\pr{bmax}$}{probability, $b_{max}$ }
\nomenclature[Cz]{$C_{\kappa}$}{constant to bound $\kappa$}
\nomenclature[Cz]{$\pr{4}$}{probability to bound $\kappa$}  
The proof can be found in the next section.

Based on \cref{cor:gs} 
and \cref{cor::kappabound1} 
it follows a Poincar\'{e}-type inequality in $\distb{\manifold{M}}$-distance. To transport this to a local Poincar\'{e} inequality w.r.t. the $\distb{SP}$-distance, we need 
the following lemma which states that the variance of a function $f$ over a ball $B_1$  w.r.t.\ the graph measure can be upper bounded by the variance of $f$ over a larger ball $B_2\supset B_1$ times the factor $\eta_{B_2}/\eta_{B_1}$.

\begin{lemma}\label{lem::imp-ineqV1}
	Let $B_1$ and $B_2$ be two sets satisfying $B_1\subset B_2\subset \mathcal{V}(G)$ and $\eta$ defined on $\mathcal{V}(G)$. 
	Then the inequality 
	\begin{align}\label{imp-ineq}
	\sum_{x\in B_1} (f(x)-\overline{f}_{B_1})^2\eta(x) \leq \sum_{x\in B_2} (f(x)-\overline{f}_{B_2})^2\eta(x)
	\end{align}
	holds with  $\overline{f}_{B_i}=\frac{\sum_{x\in B_i} f(x) \eta(x)}{\sum_{y\in B_i} \eta_y }$ for $i\in \{1,2\}$. 
\end{lemma}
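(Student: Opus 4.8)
The plan is to reduce the claim to two elementary facts: first, that the weighted mean is the minimizer of the weighted sum of squared deviations, and second, that enlarging the index set of a sum of nonnegative terms cannot decrease it.

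Concretely, I would argue as follows. Fix $f\colon \mathcal{V}(G)\to\RZ$ and recall that $\overline{f}_{B_1}=\left(\sum_{y\in B_1}\eta(y)\right)^{-1}\sum_{x\in B_1}f(x)\eta(x)$. Consider the function $c\mapsto \Phi(c):=\sum_{x\in B_1}(f(x)-c)^2\eta(x)$. Since all $\eta(x)>0$ (or at least $\geq 0$ with $\eta(B_1)>0$), $\Phi$ is a convex quadratic in $c$ with positive leading coefficient $\sum_{x\in B_1}\eta(x)>0$; setting $\Phi'(c)=-2\sum_{x\in B_1}(f(x)-c)\eta(x)=0$ gives the unique minimizer $c=\overline{f}_{B_1}$. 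Hence for \emph{any} real number $c$, and in particular for $c=\overline{f}_{B_2}$,
\begin{align*}
\sum_{x\in B_1}(f(x)-\overline{f}_{B_1})^2\eta(x)\;\leq\;\sum_{x\in B_1}(f(x)-\overline{f}_{B_2})^2\eta(x).
\end{align*}

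It remains to pass from $B_1$ to $B_2$. Because $B_1\subset B_2$ and every summand $(f(x)-\overline{f}_{B_2})^2\eta(x)$ is nonnegative,
\begin{align*}
\sum_{x\in B_1}(f(x)-\overline{f}_{B_2})^2\eta(x)\;\leq\;\sum_{x\in B_2}(f(x)-\overline{f}_{B_2})^2\eta(x),
\end{align*}
and chaining the two displays yields \eqref{imp-ineq}. There is essentially no obstacle here; the only point that deserves a word is that $\overline{f}_{B_1}$ is genuinely the \emph{minimizer} (not just a critical point) of the weighted least-squares objective, which is immediate from convexity, and that the measure weights are nonnegative so that dropping the points of $B_2\setminus B_1$ only decreases the right-hand side. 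I would also note in passing that the inequality is strict in general unless $f$ is $\overline{f}_{B_2}$-almost everywhere constant on $B_2\setminus B_1$ and $\overline{f}_{B_1}=\overline{f}_{B_2}$, though this refinement is not needed for the applications.
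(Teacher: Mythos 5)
Your proof is correct and takes essentially the same route as the paper: identify $\overline{f}_{B_1}$ as the minimizer of the weighted sum of squared deviations over $B_1$, plug in $c=\overline{f}_{B_2}$, and then enlarge the index set using nonnegativity of the summands. The extra details you supply (the explicit derivative computation and the remark on strictness) are not in the paper's proof but do not change the argument.
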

\begin{proof}[of Lemma \ref{lem::imp-ineqV1}] 
	Observe that $\overline{f}_{B_1}=\argmin_c \sum_{x \in B_1} (f(x)-c)^2 \eta(x)$. Therefore, for any $c\in \RZ$ and especially for $c=\overline{f}_{B_2}$, we have
	\begin{align*}
	\sum_{x \in B_1} (f(x)-\overline{f}_{B_1})^2 \eta(x) &\leq  \sum_{x \in B_1} (f(x)-c)^2 \eta(x)\\
	&  \leq \sum_{x \in B_2} (f(x)-c)^2 \eta(x)
	\end{align*}
	since the summands are nonnegative.
\end{proof}

\begin{proof}[of \cref{theo:LPIsp} for one given ball] 
	Let $r\in \NZ, r\geq 1$.\\
	Denote  $B_1$:=$\overline{B}_{SP}(X_i,r)$, $B_2:=\cB{X_i}{r_{\manifold{M}} } {\mathcal{V},\distb{\manifold{M}}}$  with $ r_{\manifold{M}} =\varepsilon (1-\lambda_1)^{-1} r$ and\\ $B_3:=\cB{X_i}{\left(4 \frac{(1+\lambda_2)}{(1-\lambda_1) }+1\right) r}{SP}$.
	
	We restrict our computations to the high-probability event where the inequalities \cref{ineq::dist,ineq:k} 
	hold simultaneously. This event occurs with probability at least $1-\pr{1}-\pr{7}-\pr{6}$.
	
	Then by  \cref{cor::dist} and since $r\geq 1$ 
	these sets satisfy  $B_1\subseteq B_2\subseteq B_3$. 

	We can write 
	\begin{align*}
	\sum_{x\in B_1} (f(x)-\overline{f}_{B_1})^2\eta(x) &\leq \sum_{x\in B_2} (f(x)-\overline{f}_{B_2})^2 
	~~\text{~  by Lemma \ref{lem::imp-ineqV1}}\\
	&\leq  \tilde{\kappa}_{B_2} \frac{1}{2} \sum_{x\in B_2}\sum_{y\in B_2, y\sim x} (f_x-f_y)^2 
	\text{by \cref{cor:gs}}\\
	&\leq  C_{\kappa}~ r_M^2 \sum_{x,y\in B_2, x\sim y} (f(x)-f(y))^2 
	~~ \text{~by \cref{cor::kappabound1}}\\
	& \leq    C_{\kappa}  ~r_M^2   \sum_{x,y\in B_3, x\sim y} (f(x)-f(y))^2
	~~\text{~ $B_2\subseteq B_3$}\\
	& \leq C_{\kappa} ~ \frac{\varepsilon^2 }{(1-\lambda_1)^2}  r^2 \sum_{x,y\in B_3, x\sim y} (f(x)-f(y))^2
	~~ \text{~~def. of } r_{\manifold{M}}
	\end{align*}
	We applied first \cref{lem::imp-ineqV1} using $B_1$ and $B_2$ and then  \cref{cor:gs} and \cref{cor::kappabound1} for $B_2$. Finally we used that the sum of nonnegative summands increases when increasing the number of summands by replacing $B_2$ with 
	$B_3$. 
\end{proof}
It remains to show that the proven inequality holds true simultaneously for all balls with high probability.

\begin{remark}\label{rem:dist}
	The distance approximation in \cref{cor::dist} holds uniformly for all points $x,y\in \mathcal{V}$.
	This implies that the ball inclusions hold uniformly for all center points $X_i\in \mathcal{V}$ and all radii.
\end{remark}

\begin{remark}\label{rem:kap}
	We observe that in \cref{cor::kappabound1} the probabilities $\pr{6}$ and $\pr{7}$ depend on the radius but not on the center point of the considered ball.
	We can deduce the following uniform result based on the uniform bound:
	Let $\mathcal{R}_{\manifold{M}}$ be a finite set of radii $r_{\manifold{M}}$ with $\abs{\mathcal{R}_{\manifold{M}}}\leq n$ and 
	$\mathcal{V}$ the finite set of center points $x$ ($\abs{\mathcal{V}}=n$).
	Then with probability at least
	\begin{align*}&1- 2 n^2  	\left(\frac{2 \sqrt{k+3} \max_{r_{\manifold{M}}\in \mathcal{R}_{\manifold{M}}} r_{\manifold{M}}}{L_{min}^*\varepsilon}\right)^k  \exp\left(-\frac{1}{3}\delta^2 (n-1) c_l \frac{\varepsilon^k {L_{min}^*}^k}{4^k \sqrt{k+3}^k {L_{max}^*}^k}\right)\\
	& -  2n^2 \exp\left(-\frac{\delta^2 (n-1) c_l \min_{r_{\manifold{M}}\in \mathcal{R}_{\manifold{M}}}r_{\manifold{M}}^k}{3}\right)\end{align*}
	the inequality
	\[\tilde{\kappa}_B \leq C_{\kappa} r_{\manifold{M}}^2\]
	holds for all (finite many) balls $B=\cB{X_i}{r_{\manifold{M}}}{\manifold{M}}$ with $r_{\manifold{M}}\in \mathcal{R}_{\manifold{M}}, X_i\in \mathcal{V}$.
\end{remark}

\begin{proof}[of \cref{theo:LPIsp} uniformly for all balls]\mbox{}\\
	Under $r_{max}\varepsilon^{-1}(1-\lambda_1)\geq 1$ we have $\mathcal{R}:=\NZ\cap \left(0,\min(n, r_{max}(1-\lambda_1)\varepsilon^{-1})\right)\neq \emptyset$.
	As a consequence of the \cref{rem:dist,rem:kap} applied to the radii  $r_{\manifold{M}} :=\varepsilon (1-\lambda_1)^{-1} r$ for $r\in \mathcal{R}$ the local Poincaré inequality \cref{lpi} holds uniformly for all balls $\overline{B}_{SP(x,r)}$ with $r\in \mathcal{R}$ and $X_i\in \mathcal{V}$ with probability at least 
	\begin{align*}&1-2n^2	\left(\frac{2 \sqrt{k+3} \max_{r_{\manifold{M}}\in \mathcal{R}_{\manifold{M}}} r_{\manifold{M}}}{L_{min}^*\varepsilon}\right)^k  \exp\left(-\frac{1}{3}\delta^2(n-1) c_l \frac{\varepsilon^k {L_{min}^*}^k}{4^k \sqrt{k+3}^k {L_{max}^*}^k}\right) \\
	&-  2n^2 \exp\left(-\frac{\delta^2 (n-1) c_l \min_{r_{\manifold{M}}\in \mathcal{R}_{\manifold{M}}}r_{\manifold{M}}^k}{3}\right)-\pr{1}.\end{align*}	
	Observe that
	$\max_{r_{\manifold{M}}\in \mathcal{R}_{\manifold{M}}} r_{\manifold{M}}= n \varepsilon (1-\lambda_1)$ and $\min_{r_{\manifold{M}}\in \mathcal{R}_{\manifold{M}}}r_{\manifold{M}}^k=\varepsilon^k(1-\lambda)^{-k}$
	$r_{max}(1-\lambda_1)\varepsilon^{-1}$
	since $1\leq r\leq n$ for $r\in \mathcal{R}$. Finally substitute $n-1$ by $n/2$ in the expression.
	
	The extension to non-integer radii is still missing. Let $r\in \mathcal{R}$	and $\tilde{r}\in[r,r+1)$. Then  $\lfloor \tilde{r}\rfloor=r$ and 	$\cB{x}{\tilde{r}}{SP}=\cB{x}{r}{SP}$. Applying the LPI for $r$ and substituting $r$  by $\tilde{r}$ on the right side (within the factor and as ball radius; this is possible since $\tilde{r}\geq r$) we obtain the wanted inequality for the non-integer radius $\tilde{r}$.
\end{proof}

\subsection{Proof of  \cref{cor::kappabound1}}
\label{sec:kappa}
Now we will prove the bound for the constant $\tilde{\kappa}_B$ introduced in \cref{cor::kappabound1}. 
The aim is to find an upper bound of order $\bigO(r_{\manifold{M}}^2)$  with $B:=\overline{B}_{\mathcal{V},\distb{\manifold{M}}}(X_i,r_{\manifold{M}})$ for a suitable range of $r_{\manifold{M}}$. 
From \cref{kapfinal} it follows that the choice of the set  $\tilde{\Gamma}_B$ of possible paths 
is essential for the upper bound of $\kappa_B$. 
For example, if we choose the paths with minimal number of edges and $B=\overline{B}_ {SP}(r)$ then the maximal length is bounded by $2r$. But we cannot control 
$b_{max}$. 

Following the ideas of \citeauthor{Ulrike2014} our strategy is to choose the class of random Hamming paths introduced in \cite{Ulrike2014} as set  $\tilde{\Gamma}$ of possible paths. For this specific class we can bound the path length and maximal average load in an adequate way. 

In contrast to \cite{Ulrike2014}, the vertices of the graph are drawn from a submanifold and we consider the restriction of the graph to a ball of radius $r_{\manifold{M}}$.
Moreover, we must precisely keep track of the constants, especially the radius, the sample size $n$ and $\varepsilon$.

We recall the construction of random Hamming paths for a geometric graph $G=(\mathcal{V}, \mathcal{E})$  with vertices in the unit cube $[0,1]^k$ which is based on deterministic Hamming paths between cells. 

Let's consider a regular grid on the cube with grid width $g$ (such that $1/g\in \NZ$) and assume that the following is satisfied:

\begin{enumerate}[label=\roman*)]
	\item \label{onepoint} each grid cell contains at least one point of $\mathcal{V}$
	and \item \label{connec} points in the same and in neighboring grid cells are connected in the graph.
\end{enumerate}

The Hamming cell path from cell $A$ to cell $B$ is 
the shortest sequence of adjacent grid cells such that
in the first segment of the path the cells differ only in the first coordinate of their center points, in the second segment of the path the cells differ only in the second coordinate of their center points, and so on.

A random Hamming path between two vertices $x,y\in \mathcal{V}$ is constructed
in the following way. We take the Hamming cell path between the cells containing $x$
and $y$ and then choose randomly one point $z_i$ in each of interior cells of this Hamming cell path. By \ref{onepoint} the points $z_i$ exists and by \ref{connec} the chosen points in neighboring cells are connected by an edge in the graph. Therefore  the random sequence $x=z_0,z_1,\ldots,z_{l-1},y=z_l$ of points determines a path in the graph, a so-called random Hamming path. For $x$ and $y$ in the same cell or in neighboring cells, just take the edge $e=(x,y)$ as (random) Hamming path. Then the following is known (\cite{Ulrike2014}).

\begin{lemma}\label{cube}
	Let's consider an $\varepsilon$-graph $G=(\mathcal{V},\mathcal{E})$ with vertices in the unit cube $[0,1]^k$, a regular grid on $[0,1]^k$ of grid with $g$  and $\tilde{\Gamma}$ the set of all possible random Hamming paths on the graph.
	Let $N_{min}$ and $N_{max}$ be the  minimal and  maximal number of points per grid cell.
	If $N_{min}\geq 1$ and $g \leq \frac{\varepsilon}{\sqrt{k+3}}$,
	then
	\begin{align*}
	l_{max} &\leq k\cdot \frac{1}{g}\\
	\text{and~}~	b_{max} &\leq
	\left(1+\frac{N_{max}}{N_{min}}\right)^2 
	\frac{k}{g^{k+1}}.
	\end{align*}
\end{lemma}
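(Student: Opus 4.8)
The plan is to follow the argument of \cite{Ulrike2014}, keeping explicit track of the dependence on $g$, $k$, $N_{min}$ and $N_{max}$. The two assertions are essentially independent and I would treat them separately.

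\textbf{The bound on $l_{max}$.} Let $x$ lie in a grid cell with index vector $a\in\{1,\dots,1/g\}^k$ and $y$ in a cell with index vector $b$. By construction the random Hamming path $x=z_0,z_1,\dots,z_l=y$ has exactly one edge for each step of the underlying Hamming cell path from the cell of $x$ to the cell of $y$, and that cell path takes $\abs{a_i-b_i}$ steps in coordinate $i$; hence $\NE{\gamma}=\sum_{i=1}^k\abs{a_i-b_i}\le k(1/g-1)\le k/g$, and for $x,y$ in the same or in neighbouring cells the path is the single edge $(x,y)$ and the bound is immediate. I would record at this point that the random Hamming path is genuinely a path in $G$: two successive cells of a Hamming cell path are face-adjacent, so any point of one and any point of the other are at Euclidean distance at most $\sqrt{(2g)^2+(k-1)g^2}=g\sqrt{k+3}\le\varepsilon$ under the hypothesis $g\le\varepsilon/\sqrt{k+3}$, which makes them graph-neighbours, and the hypothesis $N_{min}\ge1$ guarantees that the intermediate points $z_i$ exist in each traversed cell.

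\textbf{The bound on $b_{max}$.} Fix an edge $e=(u,v)$ and let $C_u,C_v$ be the cells of $u$ and $v$. Since $b_{max}(\tilde\Gamma)=\max_e\ewx{\Gamma}{b(e,\Gamma)}=\max_e\sum_{x,y\in\mathcal{V}}\prob{\gamma_{xy}\ni e}$, I would split the sum over the pairs $(x,y)$ according to the position of $e$ on $\gamma_{xy}$. If $e$ is an \emph{interior} edge of $\gamma_{xy}$, then $u$ and $v$ were each drawn uniformly at random from their cells, so conditionally on the cell path $\prob{\gamma_{xy}\ni e}\le N_{min}^{-2}$; moreover $(C_u,C_v)$ must occur as a consecutive pair of interior cells on the Hamming cell path joining the cell of $x$ to the cell of $y$. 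The combinatorial core is to count such cell pairs: if $C_u$ and $C_v$ differ in coordinate $j$, this requirement fixes the coordinates $>j$ of $\mathrm{cell}(x)$ and the coordinates $<j$ of $\mathrm{cell}(y)$ (they must equal those of $C_u=C_v$) and constrains the two $j$-th coordinates to lie on opposite sides of the cell-edge, so a short count shows there are $\bigO(g^{-(k+1)})$ such pairs, each cell carrying at most $N_{max}$ points, giving an interior contribution of order $(N_{max}/N_{min})^2\,g^{-(k+1)}$. If instead $e$ is the first or last edge of $\gamma_{xy}$, then $u$ or $v$ equals $x$ or $y$, which removes one cell's worth of freedom and yields a strictly lower-order contribution of order $(N_{max}/N_{min})\,g^{-(k+1)}$, while the degenerate case $C_u=C_v$ (possible only for a one-edge path) contributes a bounded number of pairs. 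Summing the three contributions and using $N_{max}\ge N_{min}\ge1$, $1/g\ge1$ and $k\ge1$, I would check that the total is at most $(1+N_{max}/N_{min})^2\,k/g^{k+1}$.

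\textbf{Main obstacle.} I expect the delicate step to be the exact cell-pair count in the interior case: getting the power of $1/g$ right (it must be $g^{-(k+1)}$, not $g^{-k}$ or $g^{-(k+2)}$), handling cleanly which cells count as interior so that the boundary edges really contribute a lower-order term, and carrying the constants so that the estimate fits under $(1+N_{max}/N_{min})^2\,k/g^{k+1}$ — the remaining arithmetic, comparing an expression of the form $c_1\rho^2+c_2\rho+c_3$ with $k(1+\rho)^2$ for $\rho=N_{max}/N_{min}\ge1$, is then routine.
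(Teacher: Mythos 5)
Correct, and essentially the same route as the paper, which does not re-derive these bounds but merely checks that $g\le\varepsilon/\sqrt{k+3}$ and $N_{min}\ge 1$ ensure the random Hamming paths exist (the $g\sqrt{k+3}$ diameter estimate you also give), then cites $l_{max}\le k/g$ and $b_{max}\le 1+\bigl(N_{max}^2/N_{min}^2+2N_{max}/N_{min}\bigr)\,k/g^{k+1}$ from \cite{Ulrike2014}, and finally uses $1\le k/g^{k+1}$ to repackage the latter as $(1+N_{max}/N_{min})^2\,k/g^{k+1}$. Your reconstruction of the combinatorial core — the $(1/g)^{j-1}\cdot(1/g)\cdot 1\cdot 1\cdot(1/g)\cdot(1/g)^{k-j}=g^{-(k+1)}$ count of cell pairs whose Hamming cell path traverses a given interior cell face — goes beyond what the paper writes and matches the cited argument; the only slip is that fixing $x=u$ for a first edge also removes a factor $1/g$ of cell freedom (not just a factor $N_{max}/N_{min}$), so the boundary contribution is of order $(N_{max}/N_{min})\,g^{-k}$ rather than $(N_{max}/N_{min})\,g^{-(k+1)}$ as you state, which only makes the final comparison easier.
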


\begin{proof}
	Observe that the Euclidean distance of two points in neighboring cells is at most $g\cdot \sqrt{k+3}$. Therefore $g \leq \frac{\varepsilon}{\sqrt{k+3}}$ implies the assumption \cref{connec}
	Moreover, $N_{min}\geq 1$ implies the assumption \cref{onepoint}. Thus  random Hamming paths for all pairs of points $x,y\in \mathcal{V}$ exist. The proofs of the bounds can be found in \cite{Ulrike2014} (see proof of Theorem 6 and proof of Proposition 22). 
	In \cite{Ulrike2014} they obtain \[b_{max}\leq 1+\left(\frac{N_{max}^2}{N_{min^2}}+2\frac{N_{max}}{N_{min}}\right) \frac{k}{g^{k+1}}.\] 
	Using $1\leq  \frac{k}{g^{k+1}}$ we get
	\[b_{max}\leq \left( 1+\frac{N_{max}^2}{N_{min^2}}+2\frac{N_{max}}{N_{min}} \right) \frac{k}{g^{k+1}}=\left(1+\frac{N_{max}}{N_{min}}\right)^2\frac{k}{g^{k+1}}.\]
\end{proof}

In the general case where we consider $\mathcal{V}\subset \manifold{M}$ or even more specifically  $\mathcal{V}\subseteq \overline{B}_{\manifold{M}}(x,r_{\manifold{M}})$ we can get back to the previous situation by mapping back the points by a bi-Lipschitz homeomorphism as defined in \cref{bilipDef}. 

This bi-Lipschitz homeomorphism enables us (as introduced by \cite{Ulrike2014}) to generate paths on the graph $G=(\mathcal{V},\mathcal{E})$ with $\mathcal{V}\subset \mathcal{X}$ based on random Hamming paths in the cube with known properties. The image of the graph under $h$ is a graph $\hat{G}$ with vertex set $h(\mathcal{V})$ and two points $x,y\in h(\mathcal{V})$ are connected in $\hat{G}$ whenever their preimages are connected in $G$. Considering a regular grid on $[0,1]^k$ with grid width $g$ such that \cref{onepoint,connec} are satisfied we can construct random Hamming paths on $\hat{G}$ for every two points $x,y\in h(\mathcal{V})$. Each of these paths corresponds to a path in $G$ by mapping the points of the path back to $\mathcal{V}$ via $h^{-1}$ where they are still connected. These paths on $G$ exhibit by construction the same properties $l_{max}$ and $b_{max}$ as the random Hamming paths on $\hat{G}$.

\begin{cor}[bounds on $l_{max}$ and $b_{max}$]\label{lmax}
	Let $G=(\mathcal{V},\mathcal{E})$ be an $\varepsilon$-graph defined from an i.i.d. sample of size $n$ from the probability measure $\mu$ on the submanifold $\manifold{M}$ of $\RZ^K$ such that \cref{ass::manifold,ass::graph,ass::lip} 
	are satisfied with parameters $n,\varepsilon, k, L_{min}^*, L_{max}^*$.
	Let  $X_i\in \mathcal{V}$ 
	Let $\mathcal{X}:=\cB{X_i}{r_{\manifold{M}}}{\manifold{M}}$.
	Let $\delta\in(0,1)$.
	Denote
	$w_-:=\min_{x\in \manifold{M}}\mu(B_{\manifold{M}}(x, \frac{L_{min}\varepsilon}{4\sqrt{k+3}L_{max}}))$, $\tilde{w}_+=\max_{x\in \manifold{M}}\mu(B_{\manifold{M}}(x, \varepsilon))$ and  $\pr{8}:=2	\left(\frac{2 \sqrt{k+3}}{L_{min}\varepsilon}\right)^k \exp\left(-\frac{1}{3}\delta^2(n-1) w_-\right)$.
		
	If 
	$n\geq \frac{1}{(1-\delta) w_-} +1$ and $\frac{\sqrt{d+3}}{L_{min}\varepsilon}\geq 1$ then with probability 
	$1-\pr{8}$ there exists a class of paths $\tilde{\Gamma}$ on the subgraph $G_B$ with vertex set $B=\mathcal{V}\cap \mathcal{X}$ such that 
	\begin{align}
	l_{max}(\tilde{\Gamma})
	&\leq 2k\frac{\sqrt{k+3}}{L_{min}\varepsilon}\label{lmaxeq}\\
	b_{max}(\tilde{\Gamma})
	&\leq\left(1+\frac{2(1+\delta)\tilde{w}_+}{(1-\delta)w_-}\right)^2k \left(2\frac{\sqrt{k+3}}{L_{min}\varepsilon}\right)^{k+1} \label{bmaxeq}
	\end{align} 
	are satisfied. 
\end{cor}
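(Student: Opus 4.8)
The plan is to reduce to the unit-cube situation of \cref{cube} by transporting the subgraph $G_B$ to $[0,1]^k$ through the bi-Lipschitz homeomorphism supplied by \cref{ass::lip}, and then to read off $l_{max}$ and $b_{max}$ there. First I would apply \cref{ass::lip} to $\mathcal{X}=\overline{B}_{\manifold{M}}(X_i,r_{\manifold{M}})$ (legitimate since $r_{\manifold{M}}<r_{max}$, and almost sure because $X_i$ is a vertex) to obtain $h:\mathcal{X}\to[0,1]^k$ with Lipschitz constants $L_{min}=L_{min}^*/r_{\manifold{M}}$, $L_{max}=L_{max}^*/r_{\manifold{M}}$; let $\widehat{G}$ be the image graph on $h(B)$ with the adjacency inherited from $G_B$. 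Next I would fix the grid width $g:=1/\lceil\sqrt{k+3}/(L_{min}\varepsilon)\rceil$, so that $1/g\in\NZ$ and, using the hypothesis $\sqrt{k+3}/(L_{min}\varepsilon)\geq 1$, one has $\tfrac{L_{min}\varepsilon}{2\sqrt{k+3}}\leq g\leq\tfrac{L_{min}\varepsilon}{\sqrt{k+3}}$. The upper bound on $g$ gives, via the lower bi-Lipschitz estimate, that two vertices of $\widehat{G}$ lying in the same or in neighbouring grid cells have preimages at Euclidean distance at most $g\sqrt{k+3}/L_{min}\leq\varepsilon$, hence joined by an edge in $G$; this is condition \ref{connec} of \cref{cube} for $\widehat{G}$.

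The second step is a concentration argument, conditional on $X_i$, establishing condition \ref{onepoint} and a bound on $N_{max}/N_{min}$. A grid cell $C_j$ with centre $c_j$ contains the Euclidean ball of radius $g/2$ about $c_j$; since $\|h(x)-c_j\|\leq L_{max}\|x-h^{-1}(c_j)\|$, its preimage contains the geodesic ball $B_{\manifold{M}}(h^{-1}(c_j),g/(2L_{max}))$, and $g/(2L_{max})\geq L_{min}\varepsilon/(4\sqrt{k+3}L_{max})$ by the lower bound on $g$, so $\mu(h^{-1}(C_j))\geq w_-$. Symmetrically $h^{-1}(C_j)$ has Euclidean diameter $<\varepsilon$ and lies in a Euclidean ball of radius $\leq\varepsilon$ about $h^{-1}(c_j)$, whence $\mu(h^{-1}(C_j))\leq\tilde{w}_+$ (using, if one reads $\tilde{w}_+$ via geodesic balls, the distance comparison inside the convex ball $\mathcal{X}$). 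Conditionally on $X_i$ the remaining $n-1$ sample points are i.i.d.\ $\mu$ and the cells are fixed, so \cref{cor:counting} applied to the $\leq(1/g)^k\leq(2\sqrt{k+3}/(L_{min}\varepsilon))^k$ cells, with a union bound and then integration over $X_i$, shows that with probability at least $1-\pr{8}$ every cell count lies in $[(1-\delta)(n-1)w_-,\,(1+\delta)(n-1)\tilde{w}_+ +1]$ (the extra $+1$ accounting for $X_i$ in its own cell, and each tail probability being at most $\exp(-\tfrac13\delta^2(n-1)w_-)$ since the larger balls used for $N_{max}$ still have mass $\geq w_-$, which is the source of the factor $2$ in $\pr{8}$). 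The hypothesis $n\geq\tfrac1{(1-\delta)w_-}+1$ makes the lower endpoint $\geq 1$, giving \ref{onepoint}; and since then $1\leq(n-1)\tilde{w}_+$, one gets $N_{max}/N_{min}\leq\tfrac{(1+\delta)(n-1)\tilde{w}_++1}{(1-\delta)(n-1)w_-}\leq\tfrac{2(1+\delta)\tilde{w}_+}{(1-\delta)w_-}$.

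Finally I would invoke \cref{cube} for $\widehat{G}$ — its proof only uses conditions \ref{onepoint} and \ref{connec}, which are now in force — to obtain a class $\widehat{\Gamma}$ of random Hamming paths with $l_{max}(\widehat{\Gamma})\leq k/g$ and $b_{max}(\widehat{\Gamma})\leq(1+N_{max}/N_{min})^2 k/g^{k+1}$, and pull it back through $h^{-1}$ to a class $\tilde{\Gamma}$ on $G_B$. Since $h$ is a bijection $B\to h(B)$ preserving adjacency, the pulled-back paths have the same numbers of edges and each edge carries the same load, so $l_{max}(\tilde{\Gamma})=l_{max}(\widehat{\Gamma})$ and $b_{max}(\tilde{\Gamma})=b_{max}(\widehat{\Gamma})$; substituting $1/g\leq 2\sqrt{k+3}/(L_{min}\varepsilon)$ and the bound on $N_{max}/N_{min}$ yields exactly \cref{lmaxeq} and \cref{bmaxeq}. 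I expect the main difficulty to be bookkeeping rather than anything conceptual: carrying the constants correctly through the rounding $1/g=\lceil\sqrt{k+3}/(L_{min}\varepsilon)\rceil$ (which is where the factors $2$ and $4$ enter) and handling the $-1$ coming from the random centre $X_i$ without losing a clean constant. The one genuinely delicate point is matching the upper bound $\mu(h^{-1}(C_j))\leq\tilde{w}_+$ to the stated definition of $\tilde{w}_+$; this is handled by the strong convexity of $\mathcal{X}$ (guaranteed by $r_{\manifold{M}}<r_{max}$), which makes geodesic and Euclidean distances comparable on scales $\leq\varepsilon$ inside $\mathcal{X}$.
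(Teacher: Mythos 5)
Your proposal is correct and follows the paper's own argument step by step: transport $G_B$ to $[0,1]^k$ via the bi-Lipschitz map from \cref{ass::lip}, choose $1/g=\lceil\sqrt{k+3}/(L_{min}\varepsilon)\rceil$, verify conditions \ref{onepoint} and \ref{connec} by sandwiching each cell's preimage between geodesic balls of radii $L_{min}\varepsilon/(4\sqrt{k+3}L_{max})$ and $\varepsilon$, control the cell counts with \cref{cor:counting} plus a union bound over the $\leq(2\sqrt{k+3}/(L_{min}\varepsilon))^k$ cells (conditioning on $X_i$ and adding $1$ for its own cell), and finally invoke \cref{cube} and pull the Hamming paths back. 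The one spot you flag — matching the Euclidean containment to the geodesic-ball definition of $\tilde{w}_+$ — is glossed over in the paper as well (which simply cites the cell inclusions from \cite{Ulrike2014}), and your proposed fix via convexity of $\mathcal{X}$ and the Euclidean-vs-geodesic comparison at scale $\leq\varepsilon$ is the right remedy; otherwise the bookkeeping, including the factor $2$ in $\pr{8}$ and the $n\geq 1/((1-\delta)w_-)+1$ hypothesis used to absorb the $+1$ into the constant, matches the paper exactly.
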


\begin{proof}
	We follow the ideas of \cite{Ulrike2014} to prove the corollary.
	As described above, we map the graph to the cube using the bi-Lipschitz homeomorphism $h$, construct random Hamming paths in the cube and map them back.
	Let $h(B)$ the image of $B$ under $h$.
	We set $1/g :=\left\lceil  \frac{\sqrt{k+3}}{L_{min}\varepsilon}\right\rceil$. Then obviously $1/g\in\NZ$ 
	is satisfied. 
	
	Now take two points $x,y\in h(B)$ in neighboring cells of the cube, then since $h$ is bi-Lipschitz and due to the definition of $g$ we get that the Euclidean distance of the preimages of $x,y$ is smaller than $\varepsilon$:
	\[\norm{h^{-1}(x)-h^{-1}(y)} \leq \norm{h^{-1}(x)-h^{-1}(y)}_{\manifold{M}}\leq \frac{1}{L_{min}}\norm{x-y} \leq \frac{1}{L_{min}} g \sqrt{k+3} \leq \varepsilon. \]
	Thus, \cref{connec} holds. 
	It remains to show that \cref{onepoint} holds. 
	Let $C_i$ be a grid cell of the cube with center point $c_i$
	and define  balls $B_i^1,B_i^2$ in the submanifold centered at $h^{-1}(c_i)$ with radii $r:=\frac{L_{min}\varepsilon}{4\sqrt{k+3}L_{max}} \leq \frac{g}{2L_{max}}$ and $R:=\varepsilon>\frac{\sqrt{k}g}{L_{min}}$.
	Conditionally on $X_i$,  these balls are deterministic and they  include $X_i$. That implies that $h(X_i)$ belongs to one grid cell. The number of points in a grid cell is determined by the remaining $n-1$ vertices (possibly increased by one, if $h(X_i)$ belongs to that grid cell.) 
	As shown in \cite{Ulrike2014} we obtain $B_i^1 \subseteq h^{-1}(C_i)\subseteq B_i^2$. We also have $B_i^1\subseteq \mathcal{X}$.
	Then the number of points in $C_i$ is bounded from below by the number of points in $B_i^1$ denoted by $\tilde{N}_i^1$ ($X_i$ is excluded). Let $\tilde{N}_{min}^1:=\min_{i=1,\ldots,W} \tilde{N}_i^1$
	Applying \cref{cor:counting} for these $W:=1/(g^k)$ balls $B_i^1$ we get
	with $w_-\leq \min_{i=1,\ldots,W}\mu(B_i^1)$
	\begin{align*}
	\prob{\tilde{N}_{min}^1\leq (1-\delta) (n-1) w_-}&\leq W \exp\left(-\frac{1}{3}\delta^2 (n-1) w_-\right) \\ &\leq \left(\frac{2 \sqrt{k+3}}{L_{min}\varepsilon}\right)^k \exp\left(-\frac{1}{3}\delta^2 (n-1) w_-\right).
	\end{align*}
	Thus  
	\cref{onepoint} holds with probability at least	$1-	W \exp\left(-\frac{1}{3}\delta^2  (n-1) w_-\right)$ if $n\geq \frac{1}{(1-\delta) w_-} +1$ is satisfied. 
	
	That means we can construct random Hamming paths for every pair of points $x,y\in  h(B)$ and we can apply \cref{cube}.
	\cref{lmaxeq} is then obvious. 
	Let $\tilde{N}_{i}^2$ be the random number of points in $B_i^2$
	and $\tilde{N}_{max}^2:=\max_{i=1\ldots,W} \tilde{N}_i^2$.
	Taking $X_i$ into account the maximal number of points in a grid cell is upper bounded by $\tilde{N}_{max}+1$.  
	Analogous to the computation above 
	we get with $\tilde{w}_-:=\min_{x\in \mathcal{X}}  \mu(B_{\manifold{M}}(x,\varepsilon)) \leq \min_{i=1,\ldots,l} \mu(B_i^2)$ that
	\begin{align*}
	\prob{\tilde{N}_{max}\geq (1+\delta) (n-1)\tilde{w}_+}&\leq W \exp\left(-\frac{1}{3}\delta^2 (n-1) \tilde{w}_-\right) \\&\leq \left(\frac{2 \sqrt{k+3}}{L_{min}\varepsilon}\right)^k \exp\left(-\frac{1}{3}\delta^2 (n-1) w_-\right).
	\end{align*}
	Observe that the results of the computations conditionally on $X_i$ do not depend on $X_i$, so they hold unconditionally.
	Thus 
	\[\frac{\tilde{N}_{max} +1}{\tilde{N}_{min}} \leq \frac{2(1+\delta) (n-1)\tilde{w}_+}{(1-\delta) (n-1) w_-}=\frac{2(1+\delta)\tilde{w}_+}{(1-\delta)w_-}\] holds with probability $1-2W \exp\left(-\frac{1}{3}\delta^2 (n-1) w_-\right)$.
	Inserting this quantity and $g$ in \cref{cube} finishes the proof.
\end{proof}
\nomenclature[cz]{$\mu_{min}$}{$\min_{x_i} \mu(B(h^{-1}(c_i),\varepsilon\frac{L_{min}}{2 \sqrt{d+3}L_{max}}))$}
\nomenclature[Cz]{$\mu_{max}$}{$\max_{x_i} \mu(B(h^{-1}(c_i),\varepsilon))$}
\nomenclature[Cz]{$g$}{grid width}

Now we can prove \cref{cor::kappabound1}.
\begin{proof}
	Let's consider $\mathcal{X}:=\overline{B}_{\manifold{M}}(x,r_{\manifold{M}})$ and $B:=\overline{B}_{\mathcal{V},\distb{\manifold{M}}}(x,r_{\manifold{M}})$. Under the assumptions of \cref{lmax} and \cref{ass::lip} we get from \cref{kapfinal,lmaxeq,bmaxeq} that
	\begin{align}\tilde{\kappa}_B
	&\leq 0.5 \frac{\max_{x\in B}\eta^2(x) }{\sum_{z\in B}\eta(z)}	\left(1+\frac{2(1+\delta)\tilde{w}_+}{(1-\delta)w_-}\right)^2 k^2 \left(2\frac{\sqrt{k+3}r_{\manifold{M}}}{L_{min}^*\varepsilon}\right)^{k+2}
	\end{align}
	with probability at least $1-\pr{8}$.
	Now we bound further the quantity under the Ahlfors assumption \labelcref{ass::Ahlfors}. 
	Note that
	\begin{align}
	\left(\sum_{z\in B}\eta(z)\right)^{-1} &\leq \frac{1}{n_B \min_{x\in B }\eta(x)}. 
	\end{align}
	Under the Ahlfors assumption we have
	\begin{align}
	\frac{2(1+\delta)\tilde{w}_+}{(1-\delta)w_-} &\leq 2\frac{1+\delta}{1-\delta}\frac{c_u \varepsilon^k L_{max}^k 4^k \sqrt{k+3}^k }{c_l \varepsilon^k L_{min}^k }\\
	&\leq 2 \frac {1+\delta}{1-\delta}\frac{c_u}{c_l}\frac{L_{max}^k}{L_{min}^k}4^k \sqrt{k+3}^k=:w,\\
	\pr{8}&\leq 2	\left(\frac{2 \sqrt{k+3}}{L_{min}\varepsilon}\right)^k  \exp\left(-\frac{1}{3}\delta^2 (n-1) c_l \frac{\varepsilon^k L_{min}^k}{4^k \sqrt{k+3}^k L_{max}^k}\right)=:\pr{7}
	\end{align}
	and by  \cref{theo::ahlf} \cref{part3}
	with probability at least $1-\exp\left(-\frac{\delta^2 (n-1) c_l r^k}{3}\right)$
	\begin{align}
	n_B\geq (n-1) (1-\delta)c_l r_{\manifold{M}}^k.  
	\end{align}
	
	Then, by plugging in these bounds, we obtain that with probability at least \linebreak $1-\pr{7}-\exp\left(-\frac{\delta^2 (n-1) c_l r^k}{3}\right)$ 
	\begin{align}
	\tilde{\kappa}_B	&\leq \frac{\max_{y\in B}\eta^2(y)}{n(1-\delta)c_l  \min_{x\in B}\eta(x)} 	\left(1+w\right)^2 k^2 \left(2\frac{\sqrt{k+3}}{L_{min}^*\varepsilon}\right)^{k+2}r_{\manifold{M}}^2\\
	&\leq \frac{(\eta^+)^2}{n(1-\delta)c_l  \eta^-} 	\left(1+w\right)^2   k^2 \left(2\frac{\sqrt{k+3}}{L_{min}^*\varepsilon}\right)^{k+2}r_{\manifold{M}}^2.
	\end{align}
	holds.
\end{proof}

\printbibliography[heading=bibintoc,title={References}]

\end{document}